\documentclass[10pt]{amsart}
\usepackage{geometry}
\usepackage[dvips]{graphicx}

\usepackage{amscd,amsfonts,amssymb,amsmath,amsthm,latexsym}
\usepackage{mathrsfs}

\usepackage{graphics}
\usepackage[all]{xy}
\xyoption{curve}
\xyoption{import}
\xyoption{arc}
\xyoption{ps}

\numberwithin{equation}{section}

\theoremstyle{plain}
    \newtheorem{thm}{Theorem}[section]
    \newtheorem{lemma}[thm]{Lemma}
    \newtheorem{coro}[thm]{Corollary}
    \newtheorem{prop}[thm]{Proposition}

    \newtheorem{question}[thm]{Question}
\theoremstyle{definition}
    \newtheorem{defi}[thm]{Definition}
    \newtheorem{ex}[thm]{Example}
    \newtheorem{remark}[thm]{Remark}
\theoremstyle{remark}


\newcommand{\semifield}{\mathbb{P}}
\newcommand{\Trop}{\operatorname{Trop}}
\newcommand{\Z}{\mathbb{Z}}
\newcommand{\Q}{\mathbb{Q}}
\newcommand{\myFF}{\mathcal{F}}
\newcommand{\x}{\mathbf{x}}

\newcommand{\TT}{\mathbb{T}_n}
\newcommand{\myAA}{\mathcal{A}}
\newcommand{\X}{\mathbf{X}}
\newcommand{\z}{z}
\newcommand{\g}{\mathbf{g}}
\newcommand{\e}{\mathbf{e}}

\newcommand{\suchthat}{\ | \ }
\newcommand{\surf}{(\Sigma,\mathbb{M})}
\newcommand{\qtau}{Q(\tau)}
\newcommand{\unredqtau}{\widehat{Q}(\tau)}
\newcommand{\qsigma}{Q(\sigma)}
\newcommand{\unredstau}{\widehat{S}(\tau)}

\newcommand{\stau}{S(\tau)}

\newcommand{\ssigma}{S(\sigma)}
\newcommand{\unredqstau}{(\widehat{Q}(\tau),\widehat{S}(\tau))}
\newcommand{\qstau}{(Q(\tau),S(\tau))}
\newcommand{\qssigma}{(Q(\sigma),S(\sigma))}

\newcommand{\arc}{i}
\newcommand{\jacobqs}{\mathcal{P}(Q,S)}
\newcommand{\jacobqstau}{{\mathcal{P}(Q(\tau),S(\tau))}}

\newcommand{\surfnoM}{\Sigma}
\newcommand{\marked}{\mathbb{M}}
\newcommand{\punct}{\mathbf{P}}
\newcommand{\arcsinsurf}{\mathbf{A}^\circ(\Sigma,\mathbb{M})}
\newcommand{\taggedinsurf}{\mathbf{A}^{\bowtie}(\Sigma,\mathbb{M})}
\newcommand{\tagfunction}{\mathfrak{t}}
\newcommand{\surfpone}{(\Sigma,\marked\cup\punct_1)}
\newcommand{\surfpn}{(\Sigma,\marked\cup\punct_n)}

\newcommand{\field}{K}
\newcommand{\C}{\mathbb{C}}
\newcommand{\completeRQ}{R\langle\langle Q\rangle\rangle}
\newcommand{\usualRQ}{R\langle Q\rangle}
\newcommand{\idealM}{\mathfrak{m}}
\newcommand{\pot}{\completeRQ_{\operatorname{cyc}}}
\newcommand{\qsreduced}{(Q_{\operatorname{red}},S_{\operatorname{red}})}
\newcommand{\qstrivial}{(Q_{\operatorname{triv}},S_{\operatorname{triv}})}
\newcommand{\muti}{\mu_i}
\newcommand{\premuti}{\widetilde{\mu}_i}
\newcommand{\al}{\mathfrak{a}}
\newcommand{\be}{\mathfrak{b}}
\newcommand{\ga}{\mathfrak{c}}
\newcommand{\rh}{\mathfrak{r}}
\newcommand{\si}{\mathfrak{s}}
\newcommand{\pipi}{\mathfrak{p}}
\newcommand{\io}{\mathfrak{i}}
\newcommand{\image}{\operatorname{im}}
\newcommand{\ored}{\operatorname{red}}
\newcommand{\oin}{\operatorname{in}}
\newcommand{\oout}{\operatorname{out}}
\newcommand{\Gr}{\operatorname{Gr}}
\newcommand{\Hom}{\operatorname{Hom}}

\setlength{\textheight}{595pt}
\addtolength{\voffset}{-10pt}
\addtolength{\textheight}{55pt}
\addtolength{\textwidth}{35pt}
\addtolength{\evensidemargin}{-40pt}
\addtolength{\headsep}{10pt}



\begin{document}

\title[QPs associated to triangulated surfaces III]{Quivers with potentials associated to triangulated surfaces, Part III: tagged triangulations and cluster monomials}
\author[Giovanni Cerulli Irelli and Daniel Labardini-Fragoso]{Giovanni Cerulli Irelli\\ Daniel Labardini-Fragoso}
\address{Mathematisches Institut, Universit\"{a}t Bonn, Bonn, Germany 53115}
\email{cerulli@math.uni-bonn.de}
\email{labardini@math.uni-bonn.de}
\subjclass[2010]{16S99, 13F60}
\keywords{Quiver with potential, Jacobian algebra, tagged triangulation, cluster algebra, cluster monomial}
\maketitle

\begin{abstract} To each tagged triangulation of a surface with marked points and non-empty boundary we associate a quiver with potential, in such a way that whenever we apply a flip to a tagged triangulation, the Jacobian algebra of the QP associated to the resulting tagged triangulation is isomorphic to the Jacobian algebra of the QP obtained by mutating the QP of the original one.  Furthermore, we show that any two tagged triangulations are related by a sequence of flips compatible with QP-mutation. We also prove that for each of the QPs constructed, the ideal of the non-completed path algebra generated by the cyclic derivatives is admissible and the corresponding quotient is isomorphic to the Jacobian algebra. These results, which generalize some of the second author's previous work for ideal triangulations, are then applied to prove properties of cluster monomials, like linear independence, in the cluster algebra associated to the given surface by Fomin-Shapiro-Thurston (with an arbitrary system of coefficients).
\end{abstract}

\tableofcontents

\section{Introduction}\label{sec:intro}

In \cite{Lqps}, the second author associated a quiver with potential (QP for short) to each ideal triangulation of a bordered surface with marked points, then proved that flips of arcs correspond to QP-mutations, and that, provided the surface has non-empty boundary, the associated QPs are non-degenerate and Jacobi-finite. However, the definition of the QPs that should correspond to tagged triangulations was not given, mainly because it was not clear that the ``obvious" potentials would possess the flip $\leftrightarrow$ QP-mutation compatibility that was proved for the QPs associated to ideal triangulations. In this paper we show that the Jacobian algebras of these ``obvious" potentials indeed possess the desired compatibility as long as the underlying surface has non-empty boundary. 
Then we show that the Jacobian algebras of these QPs can be obtained without the need of completion, that is, that each of them is (canonically) isomorphic to the quotient of the (non-completed) path algebra by the ideal generated by the cyclic derivatives, this ideal being admissible (in the classical sense of representation theory of associative algebras). The latter fact has nice consequences for the cluster algebra of the surface, for then we can use Derksen-Weyman-Zelevinsky's far-reaching homological interpretation of the $E$-invariant and the representation-theoretic interpretations of $F$-polynomials and $\g$-vectors, to obtain information about cluster monomials and their Laurent expansions with respect to a fixed cluster.

Let us describe the contents of the paper in more detail. We open with Section \ref{sec:background}, where we review the basic background on cluster algebras, quivers with potentials (and their mutations), decorated representations (and their mutations), and the relation of cluster algebras with the representation theory of quivers with potentials. In Section \ref{sec:triangulations} we recall the definition and basic properties of tagged triangulations of surfaces (and their flips). Sections \ref{sec:background} and \ref{sec:triangulations} contain the statements of most of the facts from the mentioned subjects that we use for our results in latter sections. Our intention is to be as self-contained as possible, but we also try to be as concise as possible.

In Section \ref{sec:QPs-of-tagged-triangulations} we give the definition of the QP $\qstau$ associated to a tagged triangulation $\tau$ of a surface $\surf$ with non-empty boundary (and any number of punctures). This is done by passing to the ideal triangulation $\tau^\circ$ obtained by `deletion of notches', 
then reading the potential $S(\tau^\circ)$ defined according to \cite{Lqps}, and then going back to $\tau$ by means of the function $\epsilon:\punct\to\{-1,1\}$ that takes the value 1 precisely at the punctures at which the signature of $\tau$ takes non-negative value ($\punct$ denotes the puncture set of $\surf$). In particular, the potentials obtained for ideal triangulations (which are the tagged triangulations with non-negative signature) are precisely the ones defined in \cite{Lqps}.

Once the QPs $\qstau$ are defined, we prove Theorem \ref{thm:flip<->mutation-tagged-triangs}, the first main result of this paper, which says that any two tagged triangulations $\surf$ are connected by a sequence of flips each of which is compatible with the corresponding QP-mutation. Theorem 30 of \cite{Lqps} (stated below as Theorem \ref{thm:flip<->mutation-ideal-triangs}), which says that ideal triangulations related by a flip give rise to QPs related by QP-mutation, plays an essential role in the proof of this result. From a combination of Theorem \ref{thm:flip<->mutation-tagged-triangs} with Amiot's categorification and the fact, proved by Fomin-Shapiro-Thurston, that the exchange graph of tagged triangulations coincides with the exchange graph of any of the cluster algebras associated to $\surf$, we deduce Corollary \ref{coro:flip<->mutation-jacobian}: if $\tau$ and $\sigma$ are tagged triangulations related by the flip of a tagged arc $\arc$, then the Jacobian algebra of $\qssigma$ is isomorphic to the Jacobian algebra of $\mu_{\arc}\qstau$. Theorem \ref{thm:flip<->mutation-tagged-triangs} and Corollary \ref{coro:flip<->mutation-jacobian} also mean that the QPs $\qstau$ calculate the endomorphism algebras of all the cluster-tilting objects of the generalized cluster category $\mathcal{C}_{\surf}$ that correspond to tagged triangulations.

By definition, $\stau$ is always a finite potential, that is, a finite linear combination of cycles of $\qtau$. It thus makes sense to wonder about the relation between $R\langle \qtau\rangle/J_0(\stau)$ and the Jacobian algebra $\mathcal{P}\qstau$, where $J_0(\stau)$ is the ideal the cyclic derivatives of $\stau$ generate in the non-completed path algebra $R\langle \qtau\rangle$. In Section \ref{sec:admissibility} we prove our second main result, namely, that for any tagged triangulation $\tau$ of a surface with non-empty boundary, the algebra homomorphism $R\langle \qtau\rangle/J_0(\stau)\rightarrow \mathcal{P}\qstau$ induced by the inclusion $R\langle\qtau\rangle\hookrightarrow R\langle\langle\qtau\rangle\rangle$ is an isomorphism. Since $\mathcal{P}\qstau$ is finite-dimensional, this means, in particular, that $J_0(\stau)$ is an admissible ideal of $R\langle\qtau\rangle$ and that one can work with the Jacobian algebra without having to take completions. 

The results from Section \ref{sec:admissibility} allow an application of Derksen-Weyman-Zelevinsky's homological interpretation of the $E$-invariant to obtain results about cluster monomials in the cluster algebras associated to $\surf$. This application is described in Section \ref{sec:monomials} and follows the techniques introduced by the first author in \cite{GCI}. For a cluster algebra $\mathcal{A}$ associated to $\surf$ over an arbitrary coefficient system, we prove that if $\x$ and $\x'$ are two different clusters, then any monomial in $\x'$ in which at least one element from $\x'\setminus\x$ appears with positive exponent is a $\Z\semifield$-linear combination of proper Laurent monomials in $\x$ (that is, Laurent monomials with non-trivial denominator). Then we show that in any cluster algebra satisfying this \emph{proper Laurent monomial property}, cluster monomials are linearly independent over the group ring $\Z\semifield$. We also show that in such a cluster algebra $\myAA$, if a positive element (that is, an element whose Laurent expansion with respect to each cluster has coefficients in $\Z_{\geq 0}\semifield$) belongs to the $\Z\semifield$-submodule of $\mathcal{A}$ generated by all cluster monomials, then it can be written as a $\Z_{\geq 0}\semifield$-linear combination of cluster monomials. As an application of the latter result, in Section \ref{sec:atomic-bases} we show that in coefficient-free skew-symmetric cluster algebras of types $\mathbb{A}$, $\mathbb{D}$ and $\mathbb{E}$, cluster monomials form an atomic basis (that is, a $\Z$-basis $\mathcal{B}$ of $\myAA$ such that the set of positive elements of $\myAA$ is precisely the set of non-negative $\Z$-linear combinations of elements of $\mathcal{B}$)


It is worth mentioning that the results from Section \ref{sec:monomials} are valid over arbitrary coefficient systems. However, for simplicity reasons, we have limited ourselves to work over coefficient systems of geometric type.
Throughout the paper, $\field$ will always denote a field. In Sections \ref{sec:monomials} and \ref{sec:atomic-bases} we will assume that $K=\C$.

\section{Algebraic and combinatorial background}\label{sec:background}

\subsection{Quiver mutations}\label{subsec:quiver-mutations}

In this subsection we recall the operation of quiver mutation, fundamental in Fomin-Zelevinsky's definition of (skew-symmetric) cluster algebras.
Recall that a \emph{quiver} is a finite directed graph, that is, a quadruple $Q=(Q_0,Q_1,h,t)$, where $Q_0$ is the (finite) set of
\emph{vertices} of $Q$, $Q_1$ is the (finite) set of \emph{arrows}, and $h:Q_1\rightarrow Q_0$ and $t:Q_1\rightarrow Q_0$ are the \emph{head}
and \emph{tail} functions. Recall also the common notation $a:i\rightarrow j$ to indicate that $a$ is an arrow of $Q$ with $t(a)=i$, $h(a)=j$.
We will always deal only with loop-free quivers, that is, with quivers that have no arrow $a$ with $t(a)=h(a)$.

A \emph{path of length} $d>0$ in $Q$ is a sequence $a_1a_2\ldots a_d$ of arrows with $t(a_j)=h(a_{j+1})$ for $j=1,\ldots,d-1$. A path
$a_1a_2\ldots a_d$ of length $d>0$ is a $d$\emph{-cycle} if $h(a_1)=t(a_d)$. A quiver is \emph{2-acyclic} if it has no 2-cycles.

Paths are composed as functions, that is, if $a=a_1\ldots a_d$ and $b=b_1\ldots b_{d'}$ are paths with $h(b)=t(a)$, then the product $ab$ is defined as the path $a_1,\ldots a_db_1\ldots b_{d'}$, which starts at $t(b_{d'})$ and ends at $h(a_1)$. See Figure \ref{prodofpaths}.

 \begin{figure}[!h]
                \caption{Paths are composed as functions}\label{prodofpaths}
                \centering
$$
\bullet\overset{b_{d'}}{\longrightarrow}\ldots\overset{b_1}{\longrightarrow}\bullet\overset{a_d}{\longrightarrow}\ldots\overset{a_1} {\longrightarrow}\bullet
$$
 \end{figure}

For $i\in Q_0$, an $i$\emph{-hook} in $Q$ is any path $ab$ of length 2 such that $a,b\in Q_1$ are arrows with $t(a)=i=h(b)$.

\begin{defi}\label{def:threesteps} Given a quiver $Q$ and a vertex $i\in Q_0$ such that $Q$ has no $2$-cycles incident at $i$, we define the
\emph{mutation} of $Q$ in direction $i$ as the quiver $\muti(Q)$ with vertex set $Q_0$ that results after applying the following three-step
procedure to $Q$:
\begin{itemize}
\item[(Step 1)] For each $i$-hook $ab$ introduce an arrow $[ab]:t(b)\rightarrow h(a)$.
\item[(Step 2)] Replace each arrow $a:i\rightarrow h(a)$ of $Q$ with an arrow $a^*:h(a)\rightarrow i$, and each arrow $b:t(b)\rightarrow i$
of $Q$ with an arrow $b^*:i\rightarrow t(b)$.
\item[(Step 3)] Choose a maximal collection of disjoint 2-cycles and remove them.
\end{itemize}
We call the quiver obtained after the $1^{st}$ and $2^{nd}$ steps the \emph{premutation} $\premuti(Q)$.
\end{defi}
%

\subsection{Cluster algebras}

In this subsection we recall the definition of skew-symmetric cluster algebras of geometric type. Our main references are \cite{FZ1}, \cite{FZ2}, \cite{BFZ3} and \cite{FZ4}.

Let $r$ and $n$ be non-negative integers, with $n\geq 1$. Let $\semifield=\Trop(x_{n+1},\ldots,x_{n+r})$ be the \emph{tropical semifield} on $r$ generators. By definition, $\semifield$ is the free abelian group in $r$ different symbols $x_{n+1},\ldots,x_{n+r}$, with its group operation written multiplicatively, and has the \emph{auxiliary addition} $\oplus$ defined by
$$
\left(x_{n+1}^{a_{n+1}}\ldots x_{n+r}^{a_{n+r}}\right)\oplus \left(x_{n+1}^{b_{n+1}}\ldots x_r^{b_{n+r}}\right)=x_{n+1}^{\min(a_{n+1},b_{n+1})}\ldots x_{n+r}^{\min(a_{n+r},b_{n+r})}.
$$
Thus, the elements of $\semifield$ are precisely the Laurent monomials in the symbols $x_{n+1},\ldots,x_{n+r}$, and the group ring $\Z\semifield$ is the ring of Laurent polynomials in $x_{n+1},\ldots,x_{n+r}$ with integer coefficients. (We warn the reader that the addition of $\Z\semifield$ has absolutely nothing to do with the auxiliary addition $\oplus$ of $\semifield$).

Fix a field $\myFF$ isomorphic to the field of fractions of the ring of polynomials in $n$ algebraically independent variables with coefficients in $\Z\semifield$. A (labeled) \emph{seed} in $\myFF$ is a pair $(\widetilde{B},\x)$, where
\begin{itemize}\item $\x=(x_1,\ldots,x_n)$ is a tuple of $n$ elements of $\myFF$ that are algebraically independent over $\Q\semifield$ and such that $\myFF=\Q\semifield(x_1,\ldots,x_n)$ (such tuples are often called \emph{free generating sets of $\myFF$});
\item $\widetilde{B}$ is an $(n+r)\times n$ integer matrix, whose first $n$ rows form a skew-symmetric matrix $B$.
\end{itemize}
The matrix $B$ (resp. $\widetilde{B}$) receives the name of \emph{exchange matrix} (resp. \emph{extended exchange matrix}) of the seed $(\widetilde{B},\x)$, whereas the tuple $\x$ is called the (ordered) \emph{cluster} of the seed.

\begin{defi}\label{def:seed-mutation} Let $(\widetilde{B},\x)$ be a seed. For $i\in[1,n]=\{1,\ldots,n\}$, the
\emph{mutation of $(\widetilde{B},\x)$ with respect to $i$}, denoted by $\mu_i(\widetilde{B},\x)$, is the pair $(\widetilde{B}',\x')$, where
\begin{itemize}\item $\widetilde{B}'$ is the $(n+r)\times n$ integer matrix whose entries are defined by
$$
b'_{kj}=\begin{cases}
  -b_{kj} & \text{if} \ k=i \ \text{or} \ j=i,\\
  b_{kj}+\frac{b_{ki}|b_{ij}|+|b_{ki}|b_{ij}}{2} & \text{if} \ k\neq i\neq j;
\end{cases}
$$
\item $\x'=(x'_1,\ldots,x'_n)$ is the $n$-tuple of elements of $\myFF$ given by
\begin{equation}\label{eq:cluster-mutation}
x_k'=\begin{cases}
 x_k & \text{if} \ k\neq i,\\
 \frac{\prod_{j=1}^{n+r}x_j^{[b_{ji}]_+}+\prod_{j=1}^{n+r}x_j^{[-b_{ji}]_+}}{x_i} & \text{if} \ k=i,
\end{cases}
\end{equation}
where $[b]_+=\max(0,b)$ for any real number $b$.
\end{itemize}
\end{defi}

It is easy to check that $\mu_i$ is an involution of the set of all seeds of $\myFF$. That is, if $(\widetilde{B},\x)$ is a seed in $\myFF$, then $\mu_i(\widetilde{B},\x)$ is a seed in $\myFF$ as well, and $\mu_i\mu_i(\widetilde{B},\x)=(\widetilde{B},\x)$. Two seeds are \emph{mutation equivalent} if one can be obtained from the other by applying a finite sequence of seed mutations.

Let $\TT$ be an $n$-regular tree, with each of its edges labeled by a number from the set $[1,n]$ in such a way that different edges incident to the same vertex have different labels.
A \emph{cluster pattern} assigns to each vertex $t$ of $\TT$ a seed $(\widetilde{B}_t,\x_t)$, in such a way that whenever two vertices $t,t'$, of $\TT$ are connected by an edge labeled with the number $i$, their corresponding seeds are related by seed mutation with respect to $i$. It is clear that if we fix a vertex $t_0$ of $\TT$, any cluster pattern is uniquely determined by the choice of an \emph{initial seed} $(\widetilde{B}_{t_0},\x_{t_0})$. By definition, the (skew--symmetric) \emph{cluster algebra} $\myAA(\widetilde{B}_{t_0},\x_{t_0})$ associated to the seed $(\widetilde{B}_{t_0},\x_{t_0})$ is the $\Z\semifield$-subalgebra of $\myFF$ generated by union of all clusters of the seeds mutation equivalent to $(\widetilde{B}_{t_0},\x_{t_0})$. Note that, up to field automorphisms of $\myFF$, the cluster algebra $\myAA(\widetilde{B}_{t_0},\x_{t_0})$ depends only on the \emph{initial extended exchange matrix} $\widetilde{B}_{t_0}$. Hence it is customary to write $\myAA(\widetilde{B}_{t_0})=\myAA(\widetilde{B}_{t_0},\x_{t_0})$.

Because of \eqref{eq:cluster-mutation}, for every vertex $t$ of $\TT$, all cluster variables in $\myAA(\widetilde{B}_{t_0})$ can be expressed as rational functions in $\x_t$ with coefficients in $\Z\semifield$. One of the fundamental theorems of cluster algebra theory, the famous \emph{Laurent phenomenon} of Fomin-Zelevinsky, asserts that these rational functions are actually Laurent polynomials in $\x_t$ with coefficients in $\Z\semifield$. 
Fomin-Zelevinsky have conjectured that all integers appearing in these Laurent expansions are non-negative.

The language of quivers turns out to be extremely useful to obtain information about cluster algebras. Each skew-symmetric matrix $B$ gives rise to a quiver $Q=Q(B)$ as follows: the vertex set of $Q$ is $Q_0=[1,n]$, and for each pair of vertices $i,j$, $Q$ has $b_{ij}$ arrows from $j$ to $i$ provided $b_{ij}\geq 0$. The quiver counterpart of the mutation rule for matrices stated in Definition \ref{def:seed-mutation} is precisely the quiver mutation described in Definition \ref{def:threesteps}.

\subsection{Quivers with potentials}

Here we give the background on quivers with potentials and their mutations we shall use in the remaining sections. Our main reference for this subsection is \cite{DWZ1}.  A survey of the topics treated in \cite{DWZ1} can be found in \cite{ZOberwolfach2007}.

Given a quiver $Q$, we denote by $R$ the $\field$-vector space with basis $\{e_i\suchthat i\in Q_0\}$. If we define $e_ie_j=\delta_{ij}e_i$,
then $R$ becomes naturally a commutative semisimple $\field$-algebra, which we call the \emph{vertex span} of $Q$; each $e_i$ is called the
\emph{path of length zero} at $i$. We define the \emph{arrow span} of $Q$ as the $\field$-vector space $A$ with basis the set of arrows $Q_1$.
Note that $A$ is an $R$-bimodule if we define $e_ia=\delta_{i,h(a)}a$ and $ae_j=a\delta_{t(a),j}$ for $i\in Q_0$ and $a\in Q_1$. For $d\geq 0$
we denote by $A^d$ the $\field$-vector space with basis all the paths of length $d$ in $Q$; this space has a natural $R$-bimodule structure as well. Notice that $A^0=R$ and $A^1=A$.

\begin{defi} The \emph{complete path algebra} of $Q$ is the $\field$-vector space consisting of all possibly infinite linear combinations of paths in $Q$,
that is,
\begin{equation}
\completeRQ=\underset{d=0}{\overset{\infty}{\prod}}A^d;
\end{equation}
with multiplication induced by concatenation of paths.
\end{defi}

Note that $\completeRQ$ is a $\field$-algebra and an $R$-bimodule, and has the usual \emph{path
algebra}
\begin{equation}
\usualRQ=\underset{d=0}{\overset{\infty}{\bigoplus}}A^d
\end{equation}
as $\field$-subalgebra and sub-$R$-bimodule. Moreover, $\usualRQ$ is dense in $\completeRQ$ under the $\idealM$-adic topology, whose fundamental system
of open neighborhoods around $0$ is given by the powers of $\idealM=\idealM(Q)=\prod_{d\geq 1}A^d$, which is the ideal of $\completeRQ$ generated by the arrows. A crucial property of this
topology is the following:
\begin{equation}\label{eq:convergence-in-R<<Q>>}
\text{a sequence $(x_n)_{n\in\mathbb{N}}$ of elements of $\completeRQ$ converges if and only if for every $d\geq 0$},
\end{equation}
\begin{center}
the sequence $(x_n^{(d)})_{n\in\mathbb{N}}$ stabilizes as $n\rightarrow\infty$, in which case
$\underset{n\rightarrow\infty}{\lim}x_n=\underset{d\geq 0}{\sum}\underset{n\rightarrow\infty}{\lim}x_n^{(d)}$,
\end{center}
where $x_n^{(d)}$ denotes the degree-$d$ component of $x_n$.

Even though the action of $R$ on $\completeRQ$ (and $\usualRQ$) is not central, it is compatible with the multiplication of $\completeRQ$ in the sense that if
$a$ and $b$ are paths in $Q$, then $e_{h(a)}ab=ae_{t(a)}b=abe_{t(b)}$. Therefore we will say that $\completeRQ$ (and $\usualRQ$) are
$R$-algebras. Accordingly, any $K$-algebra homomorphism $\varphi$ between (complete) path algebras will be called an $R$-algebra homomorphism
if the underlying quivers have the same set of vertices and $\varphi(r)=r$ for every $r\in R$. It is easy to see that every $R$-algebra
homomorphism between complete path algebras is continuous. The following is an extremely useful criterion to decide if a given linear map
$\varphi:\completeRQ\rightarrow R\langle\langle Q'\rangle\rangle$ between complete path algebras (on the same set of vertices) is an $R$-algebra homomorphism or an $R$-algebra
isomorphism (for a proof, see \cite[Proposition 2.4]{DWZ1}):
\begin{align}\label{eq:bimodule-homomorphisms}
& \text{Every pair $(\varphi^{(1)},\varphi^{(2)})$ of $R$-bimodule homomorphisms $\varphi^{(1)}:A\rightarrow A'$, $\varphi^{(2)}:A\rightarrow\idealM(Q')^2$,} \\
\nonumber
& \text{extends uniquely to a continuous $R$-algebra homomorphism $\varphi:\completeRQ\rightarrow R\langle\langle Q'\rangle\rangle$} \\
\nonumber
& \text{such that $\varphi|_A=(\varphi^{(1)},\varphi^{(2)})$.
Furthermore, $\varphi$ is $R$-algebra isomorphism if and only if}\\
\nonumber
 & \text{$\varphi^{(1)}$ is an $R$-bimodule isomorphism.}
\end{align}

There are many definitions surrounding Derksen-Weyman-Zelevinsky's mutation theory of quivers with potentials. In order to be as concise as possible, but still self-contained, we present the most important ones (for our purposes) at once.

\begin{defi}\label{def:QP-stuff}\begin{itemize}\item A \emph{potential} on $Q$ (or $A$) is any element of $\completeRQ$ all of whose terms are cyclic paths of positive length. The set of all potentials on
$Q$ is denoted by $\pot$, it is a closed vector subspace of $\completeRQ$.
\item Two potentials $S,S'\in\pot$ are \emph{cyclically equivalent} if $S-S'$
lies in the closure of the vector subspace of $\completeRQ$ spanned by all the elements of the form $a_1\ldots a_d-a_2\ldots a_da_1$ with $a_1\ldots
a_d$ a cyclic path of positive length.
\item A \emph{quiver with potential} is a pair $(Q,S)$ (or $(A,S)$), where $S$ is a potential on $Q$ such that no two different cyclic paths
appearing in the expression of $S$ are cyclically equivalent. Following \cite{DWZ1}, we will use the shorthand \emph{QP} to abbreviate ``quiver
with potential".
\item The \emph{direct sum} of two QPs $(A,S)$ and $(A',S')$ on the same set of vertices is the QP $(A,S)\oplus(A',S')=(A\oplus A',S+S')$. (The $R$-bimodule $A\oplus A'$ is the arrow span of the quiver whose vertex set is $Q_0$ and whose arrow set is $Q_1\sqcup Q_1'$.)
\item If $(Q,S)$ and $(Q',S')$ are QPs on the same set of vertices, we say that $(Q,S)$ is \emph{right-equivalent} to $(Q',S')$ if there exists a
\emph{right-equivalence} between them, that is, an $R$-algebra isomorphism $\varphi:\completeRQ\rightarrow R\langle\langle Q'\rangle\rangle$ such that $\varphi(S)$ is cyclically
equivalent to $S'$.
\item For each arrow $a\in Q_1$ and each cyclic path $a_1\ldots a_d$ in $Q$ we define the \emph{cyclic derivative}
\begin{equation}
\partial_a(a_1\ldots a_d)=\underset{i=1}{\overset{d}{\sum}}\delta_{a,a_i}a_{i+1}\ldots a_da_1\ldots a_{i-1},
\end{equation}
(where $\delta_{a,a_i}$ is the \emph{Kronecker delta}) and extend $\partial_a$ by linearity and continuity to obtain a map $\partial_a:\pot\rightarrow\completeRQ$.
\item The \emph{Jacobian ideal} $J(S)$ is the closure of the two-sided ideal of $\completeRQ$ generated by $\{\partial_a(S)\suchthat a\in Q_1\}$, and the
\emph{Jacobian algebra} $\jacobqs$ is the quotient algebra $\completeRQ/J(S)$.
\item A QP $(Q,S)$ is \emph{trivial} if $S\in A^2$ and $\{\partial_a(S)\suchthat a\in Q_1\}$ spans $A$.
\item A QP $(Q,S)$ is \emph{reduced} if the degree-$2$ component of $S$ is $0$, that is,
if the expression of $S$ involves no $2$-cycles.
\item We say that a quiver $Q$
(or its arrow span, or any QP having it as underlying quiver) is \emph{2-acyclic} if it has no $2$-cycles.
\end{itemize}
\end{defi}

\begin{prop}\cite[Propositions 3.3 and 3.7]{DWZ1} \begin{enumerate}\item If $S,S'\in\pot$ are cyclically equivalent, then
$\partial_a(S)=\partial_a(S')$ for all $a\in Q_1$.
\item Jacobian ideals and Jacobian algebras are invariant under
right-equivalences. That is, if $\varphi:\completeRQ\rightarrow R\langle\langle Q'\rangle\rangle$ is a right-equivalence between $(Q,S)$ and $(Q',S')$, then $\varphi$
sends $J(S)$ onto $J(S')$ and therefore induces an isomorphism $\jacobqs\rightarrow\mathcal{P}(Q',S')$.
\end{enumerate}
\end{prop}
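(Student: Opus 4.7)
My plan is as follows.

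\textbf{Part (1).} The cyclic derivative $\partial_a:\pot\to\completeRQ$ is $\field$-linear and continuous: on each homogeneous piece $A^d$ it is given by a finite sum, and the extension to $\pot$ is compatible with the $\idealM$-adic topology by construction. Hence, in order to show that $\partial_a$ annihilates the subspace along which cyclic equivalence is declared, it suffices to verify $\partial_a(a_1\cdots a_d)=\partial_a(a_2\cdots a_d a_1)$ on a single cyclic path, which is immediate from a one-line reindexing of the sum in the definition of $\partial_a$.

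\textbf{Part (2), reduction.} Since $\varphi$ is a right-equivalence, $\varphi(S)$ is cyclically equivalent to $S'$, so by part (1), $\partial_{a'}\varphi(S)=\partial_{a'}(S')$ for every arrow $a'$ of $Q'$, and therefore $J(\varphi(S))=J(S')$. It thus suffices to prove that $\varphi(J(S))=J(\varphi(S))$ for an arbitrary $R$-algebra isomorphism $\varphi:\completeRQ\to R\langle\langle Q'\rangle\rangle$ and an arbitrary potential $S\in\pot$.

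\textbf{Part (2), chain rule and conclusion.} The engine of the argument is a \emph{cyclic chain rule}: for every cyclic path $w=a_1\cdots a_d$ in $Q$ and every arrow $a'$ of $Q'$, $\partial_{a'}(\varphi(w))$ can be expressed as a finite sum of terms of the form $u\cdot \varphi(\partial_a w)\cdot v$, with $a\in Q_1$ and $u,v\in R\langle\langle Q'\rangle\rangle$. This will be obtained by applying the ordinary Leibniz rule for $\partial_{a'}$ to the product $\varphi(w)=\varphi(a_1)\cdots\varphi(a_d)$, using the cyclic invariance from part (1) to rotate each factor $\varphi(a_k)$ into the first position and reassembling the remainder as $\varphi(a_{k+1}\cdots a_d a_1\cdots a_{k-1})$. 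Extending by linearity and continuity (writing $S=\sum c_w w$ as a convergent sum of cyclic paths), this yields $\partial_{a'}\varphi(S)\in\varphi(J(S))$ for every $a'$; passing to closures gives $J(\varphi(S))\subseteq\varphi(J(S))$. Running the identical argument with $\varphi^{-1}$ in place of $\varphi$ and $\varphi(S)$ in place of $S$ produces the reverse inclusion, and the proposition follows.

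\textbf{Main obstacle.} The only delicate point is the cyclic chain rule. The subtlety is bookkeeping: $\varphi(a_k)$ is a priori a sum of \emph{non-cyclic} paths from $t(a_k)$ to $h(a_k)$ in $Q'$, so one cannot directly apply $\partial_{a'}$ to it; one must introduce an auxiliary non-cyclic differential (a coproduct-type map recording each occurrence of $a'$), carry out the Leibniz expansion, and then check that summing over $k$ reassembles the terms precisely into $\varphi(\partial_{a_k}w)$ with no leftover boundary contributions. Once this identity is in place, everything else is formal: continuity of $\varphi$ and of $\partial_a$, the symmetry between $\varphi$ and $\varphi^{-1}$, and the reduction via part (1).
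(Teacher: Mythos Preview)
The paper does not supply its own proof of this proposition; it simply cites \cite[Propositions 3.3 and 3.7]{DWZ1}. Your proposal is correct and in fact follows the same line as the original argument in \cite{DWZ1}: part (1) is exactly their Proposition 3.3, and for part (2) your ``cyclic chain rule'' is precisely their Lemma 3.8, which introduces a non-cyclic coproduct-type derivation $\Delta_{a'}$ and establishes the identity $\partial_{a'}(\varphi(S))=\sum_{a\in Q_1}(\text{left factor})\cdot\varphi(\partial_a S)\cdot(\text{right factor})$ that you describe.
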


One of the main technical results of \cite{DWZ1} is the \emph{Splitting Theorem}, which we now state.

\begin{thm}\cite[Theorem 4.6]{DWZ1}\label{thm:splittingthm} For every QP $(Q,S)$ there exist a trivial QP
$\qstrivial$ and a reduced QP $\qsreduced$ such that $(Q,S)$ is right-equivalent to the direct sum $\qstrivial\oplus\qsreduced$. Furthermore,
the right-equivalence class of each of the QPs $\qstrivial$ and $\qsreduced$ is determined by the right-equivalence class of $(Q,S)$.
\end{thm}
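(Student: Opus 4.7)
My plan is to establish existence and uniqueness separately, in both cases via an $\idealM$-adic iterative construction based on the criterion \eqref{eq:bimodule-homomorphisms} and the convergence principle \eqref{eq:convergence-in-R<<Q>>}. For existence I would begin by writing $S=S^{(2)}+S^{(\geq 3)}$ with $S^{(2)}\in A^2$ and $S^{(\geq 3)}\in\idealM^3$, and then, after a purely linear (degree-preserving) change of variables that diagonalises $S^{(2)}$ into a sum of distinct $2$-cycles, I would choose an $R$-bimodule decomposition $A=A_{\operatorname{triv}}\oplus A_{\operatorname{red}}$ in such a way that $S^{(2)}$ involves only arrows from $A_{\operatorname{triv}}$ and induces a non-degenerate $R$-bilinear pairing on it. The arrows in $A_{\operatorname{triv}}$ will be the arrows of $Q_{\operatorname{triv}}$, and those in $A_{\operatorname{red}}$ the arrows of $Q_{\operatorname{red}}$.

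To realise the splitting I would construct an $R$-algebra automorphism $\varphi$ of $\completeRQ$ as an $\idealM$-adic limit $\varphi=\lim_{n}(\varphi_n\circ\cdots\circ\varphi_1)$, where each $\varphi_n$ is obtained via \eqref{eq:bimodule-homomorphisms} from an $R$-bimodule map of the form $a\mapsto a+\eta^{(n)}_a$ with $\eta^{(n)}_a\in\idealM^{n+1}$, chosen so as to kill the ``mixed'' part (paths containing at least one arrow from each of $A_{\operatorname{triv}}$ and $A_{\operatorname{red}}$) of the degree-$(n{+}1)$ component of $(\varphi_n\circ\cdots\circ\varphi_1)(S)$. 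At each stage, finding $\eta^{(n)}_a$ reduces to solving a linear system whose coefficient matrix is the gradient of $S^{(2)}|_{A_{\operatorname{triv}}}$, and non-degeneracy of that pairing guarantees solvability. Convergence is then automatic from \eqref{eq:convergence-in-R<<Q>>}, and the limit satisfies $\varphi(S)=S_{\operatorname{triv}}+S_{\operatorname{red}}$, with $S_{\operatorname{triv}}$ built purely from arrows in $A_{\operatorname{triv}}$ (so that $\qstrivial$ is trivial in the sense of Definition \ref{def:QP-stuff}) and $S_{\operatorname{red}}\in\idealM^3$ built purely from arrows in $A_{\operatorname{red}}$, yielding the desired right-equivalence $(Q,S)\sim\qstrivial\oplus\qsreduced$.

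For uniqueness, given any two direct-sum decompositions of $(Q,S)$ into a trivial and a reduced QP, I would compose the two right-equivalences to obtain a single right-equivalence between the two direct sums and analyse it order by order. Its degree-$2$ component forces a linear $R$-bimodule isomorphism between the two trivial arrow spans (both of which are pinned down by the shape of the degree-$2$ potential), and an iteration entirely analogous to the existence argument upgrades this to right-equivalences of the trivial factors and of the reduced factors separately. The main obstacle will be the bookkeeping inside the existence iteration: one must verify that the substitution $a\mapsto a+\eta^{(n)}_a$ at level $n$ does not destroy the normalisations achieved at earlier levels, i.e. that the new mixed contributions it creates all lie in $\idealM^{n+2}$. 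This is precisely what the non-degeneracy of $S^{(2)}|_{A_{\operatorname{triv}}}$ provides: it allows the corrective substitutions to be made locally within $A_{\operatorname{triv}}$ without reintroducing lower-order errors, so that the composition converges in the $\idealM$-adic topology as required by \eqref{eq:convergence-in-R<<Q>>}.
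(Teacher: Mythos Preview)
The paper does not give its own proof of this statement: Theorem~\ref{thm:splittingthm} is quoted verbatim from \cite[Theorem~4.6]{DWZ1} as background, with no argument supplied. So there is nothing in the present paper to compare your proposal against.

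That said, your outline is essentially the strategy of the original proof in \cite{DWZ1}. One point to sharpen: what the iteration must remove at each stage is not merely the ``mixed'' terms (those involving arrows from both $A_{\operatorname{triv}}$ and $A_{\operatorname{red}}$) but \emph{all} terms of degree $\geq 3$ that involve at least one arrow from $A_{\operatorname{triv}}$; otherwise you could be left with a potential on $A_{\operatorname{triv}}$ that is not purely quadratic, and $\qstrivial$ would fail to be trivial in the sense of Definition~\ref{def:QP-stuff}. The non-degeneracy of the pairing on $A_{\operatorname{triv}}$ is exactly what lets you absorb such terms into the quadratic part via substitutions of the form $a\mapsto a+\eta_a^{(n)}$ for $a\in A_{\operatorname{triv}}$ only, and this is also what guarantees that no lower-order errors are reintroduced. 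With that correction your existence argument matches \cite{DWZ1}; for uniqueness, the bookkeeping in \cite{DWZ1} is somewhat more delicate than you indicate (it goes through an auxiliary statement about right-equivalences between a trivial QP and a direct sum trivial~$\oplus$~reduced), but your overall plan is sound.
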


In the situation of Theorem \ref{thm:splittingthm}, the QPs $\qsreduced$ and $\qstrivial$ are called, respectively, the \emph{reduced part} and the \emph{trivial
part} of $(Q,S)$; this terminology is well defined up to right-equivalence.

We now turn to the definition of mutation of a QP. Let $(Q,S)$ be a QP on the vertex set $Q_0$ and let $i\in Q_0$. Assume that $Q$ has no
2-cycles incident to $i$. Thus, if necessary, we replace $S$ with a cyclically equivalent potential so that we can assume that every cyclic
path appearing in the expression of $S$ does not begin at $i$. This allows us to define $[S]$ as the potential on $\premuti(Q)$ obtained from
$S$ by replacing each $i$-hook $ab$ with the arrow $[ab]$ (see the line preceding Definition \ref{def:threesteps}). Also, we define $\Delta_i(Q)=\sum b^*a^*[ab]$,
where the sum runs over all $i$-hooks $ab$ of $Q$.

\begin{defi}\label{def:QP-mutation} Under the assumptions and notation just stated, we define the \emph{premutation} of $(Q,S)$ in direction $i$ as the QP
$\premuti(Q,S)=(\premuti(Q),\premuti(S))$ (see Definition \ref{def:threesteps}), where
$\premuti(S)=[S]+\Delta_i(Q)$. The \emph{mutation} $\muti(Q,S)$ of $(Q,S)$ in direction $i$ is then defined as the reduced part of
$\premuti(Q,S)$.
\end{defi}

%

\begin{defi}\begin{itemize}\item A QP $(Q,S)$ is \emph{non-degenerate} if it is 2-acyclic and the QP obtained after any possible sequence of QP-mutations is 2-acyclic.
\item A QP $(Q,S)$ is \emph{rigid} if every cycle in $Q$ is cyclically equivalent to an element of the Jacobian ideal $J(S)$.
\end{itemize}
\end{defi}

The next theorem summarizes the main results of \cite{DWZ1} concerning QP-mutations. The reader can find the statements and their respective proofs in Theorem 5.2 and Corollary 5.4, Theorem 5.7, Corollary 7.4, Corollary 6.11, and Corollary 6.6, of \cite{DWZ1}.

\begin{thm}\label{thm:propertiesofQP-mutations}\begin{enumerate}\item Premutations and mutations are well defined up to right-equivalence.
\item Mutations are involutive up to right-equivalence.
\item If the base field $\field$ is uncountable, then every 2-acyclic quiver admits a non-degenerate QP.
\item The class of reduced rigid QPs is closed under QP-mutation. Consequently, every rigid reduced QP is non-degenerate.
\item Finite-dimensionality of Jacobian algebras is invariant under QP-mutations.
\end{enumerate}
\end{thm}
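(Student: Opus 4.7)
The theorem gathers five distinct assertions, and the plan is to address each in turn.

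For (1), the only ambiguities in the construction of $\widetilde{\mu}_i(Q,S)$ are the choice of a cyclically equivalent representative of $S$ avoiding cyclic terms that start at $i$ and, in passing to the reduced part, the choice of a splitting. I would first check that if $S$ and $S'$ are such representatives of the same cyclic equivalence class, then $[S]$ and $[S']$ are cyclically equivalent in the premutated quiver, so the premutation is well-defined up to right-equivalence (via the identity on arrows). Well-definedness of the mutation then follows from the uniqueness clause of the Splitting Theorem \ref{thm:splittingthm}.

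For (2), my approach would be to construct an explicit right-equivalence $\varphi : R\langle\langle \widetilde{\mu}_i\widetilde{\mu}_i(Q)\rangle\rangle \to R\langle\langle Q \sqcup Q_{\text{triv}}\rangle\rangle$ using the criterion \eqref{eq:bimodule-homomorphisms}, namely by sending $a^{**}\mapsto -a$ and $b^{**}\mapsto b$, and mapping the doubly-bracketed arrows to appropriate composites so that $\varphi(\widetilde{\mu}_i\widetilde{\mu}_i(S))$ becomes cyclically equivalent to $S$ plus a sum of 2-cycles paired off by a trivial potential. Taking reduced parts and invoking (1) then recovers $(Q,S)$.

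For (3), I would use a Baire-category argument in the $K$-vector space $\mathcal{V}$ of potentials on a fixed 2-acyclic quiver $Q$. For each finite sequence $\mathbf{i}=(i_1,\ldots,i_k)$ of vertices, the locus $U_{\mathbf{i}}\subseteq\mathcal{V}$ of potentials along which iterated mutation preserves 2-acyclicity is Zariski-open; by an inductive perturbation argument, it is also non-empty. Since the set of such sequences is countable while $K$ is uncountable, $\bigcap_{\mathbf{i}} U_{\mathbf{i}}$ is non-empty, producing a non-degenerate potential. The main obstacle is the inductive step, which requires a careful analysis of how new 2-cycles could arise during mutation.

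For (4) and (5), the central observation is that any cycle $c$ in $\widetilde{\mu}_i(Q)$ can be ``unbracketed'' into a cycle in $Q$ by replacing each $[ab]$ by $ab$, so rigidity of $(Q,S)$ yields rigidity of the premutation; the Splitting Theorem then transfers rigidity to the reduced part. Reduced rigid QPs are 2-acyclic by a degree analysis of cyclic derivatives, establishing closure and hence non-degeneracy. For (5), one tracks $\dim_K \mathcal{P}(Q,S)$ through the premutation: the relations $\partial_{[ab]}\Delta_i(Q) = b^*a^*$ allow the new arrows $[ab]$ to be eliminated modulo the old generators of the Jacobian ideal, so finite-dimensionality is preserved by premutation; reduction preserves the isomorphism class of the Jacobian algebra, and involutivity (2) gives the converse direction.
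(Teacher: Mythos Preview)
The paper does not prove this theorem: it is stated as background and the proof is delegated entirely to \cite{DWZ1} (specifically Theorem~5.2, Corollary~5.4, Theorem~5.7, Corollary~7.4, Corollary~6.11, and Corollary~6.6 there). So there is no in-paper argument to compare against; the relevant comparison is with the DWZ proofs.

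Your sketches for (2) and (3) are essentially the DWZ arguments and are fine at this level of detail. However, there is a genuine gap in (1). The assertion ``well defined up to right-equivalence'' means that if $(Q,S)$ and $(Q',S')$ are right-equivalent QPs, then $\widetilde{\mu}_i(Q,S)$ and $\widetilde{\mu}_i(Q',S')$ (and likewise the mutations) are right-equivalent. You only address the internal ambiguity of the construction (choice of cyclic representative, choice of splitting), which is the easy half; you do not explain how an arbitrary right-equivalence $\varphi:(Q,S)\to(Q',S')$, which can mix arrows incident to $i$ with long paths, induces a right-equivalence of premutations. This is the substantive content of \cite[Theorem~5.2]{DWZ1} and requires a nontrivial construction.

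Your outline for (5) is also garbled: the relation $\partial_{[ab]}(\widetilde{\mu}_i(S)) = \partial_{[ab]}([S]) + b^*a^*$ lets you eliminate the composites $b^*a^*$ (expressing them via paths not through $i$), not the arrows $[ab]$; the argument for finite-dimensionality has to control paths in $\widetilde{\mu}_i(Q)$ by bounding them modulo $J_0(\widetilde{\mu}_i(S))$ in terms of paths in $Q$, along the lines of \cite[Proposition~6.1--6.2, Corollary~6.6]{DWZ1}. Your sketch for (4) is in the right spirit but omits the step linking membership in $J(S)$ to membership in $J(\widetilde{\mu}_i(S))$ after bracketing, which again needs the explicit identities of \cite[\S6]{DWZ1}.
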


\subsection{QP-representations}

In this subsection we describe how the notions of right-equivalence and QP-mutation extend to the level of representations. As in the previous subsection, our main reference is \cite{DWZ1}.

\begin{defi} Let $(Q,S)$ be any QP. A \emph{decorated} $(Q,S)$\emph{-representation}, or \emph{QP-representation}, is a quadruple $\mathcal{M}=(Q,S,M,V)$, where $M$ is a finite-dimensional left $\jacobqs$-module and $V$ is a finite-dimensional left $R$-module.
\end{defi}

By setting $M_i=e_iM$ for each $i\in Q_0$, and $a_M:M_{t(a)}\rightarrow M_{h(a)}$ as the multiplication by $a\in Q_1$ given by the $\completeRQ$-module structure of $M$, we easily see that each $\mathcal{P}(Q,S)$-module induces a representation of the quiver $Q$. Furthermore, since every finite-dimensional $\completeRQ$-module is nilpotent (that is, annihilated by some power of $\idealM$) any QP-representation is prescribed by the following data:
\begin{enumerate}\item A tuple $(M_i)_{i\in Q_0}$ of finite-dimensional $K$-vector spaces;
\item a family $(a_M:M_{t(a)}\rightarrow M_{h(a)})_{a\in Q_0}$ of $K$-linear transformations annihilated by $\{\partial_a(S)\suchthat a\in Q_1\}$, for which there exists an integer $r\geq 1$ with the property that the composition ${a_1}_{M}\ldots {a_r}_{M}$ is identically zero for every $r$-path $a_1\ldots a_r$ in $Q$.
\item a tuple $(V_i)_{i\in Q_0}$ of finite-dimensional $\field$-vector spaces (without any specification of linear maps between them).
\end{enumerate}

\begin{remark}
In the literature, the linear map $a_M:M_{t(a)}\rightarrow M_{h(a)}$ induced by left multiplication by $a$ is more commonly denoted by $M_a$. We will use both of these notations indistinctly.
\end{remark}

\begin{defi} Let $(Q,S)$ and $(Q',S')$ be QPs on the same set of vertices, and let $\mathcal{M}=(Q,S,M,V)$ and $\mathcal{M}'=(Q',S',M',V')$ be decorated representations. A triple $\Phi=(\varphi,\psi,\eta)$ is called a \emph{right-equivalence} between $\mathcal{M}$ and $\mathcal{M}'$ if the following three conditions are satisfied:
\begin{itemize}\item $\varphi:\completeRQ\rightarrow R\langle\langle Q'\rangle\rangle$ is a right-equivalence of QPs between $(Q,S)$ and $(Q',S')$;
\item $\psi:M\rightarrow M'$ is a vector space isomorphism such that $\psi\circ u_M=\varphi(u)_{M'}\circ\psi$ for all $u\in\completeRQ$;
\item $\eta:V\rightarrow V'$ is an $R$-module isomorphism.
\end{itemize}
\end{defi}


Recall that every QP is right-equivalent to the direct sum of its reduced and trivial parts, which are determined up to right-equivalence (Theorem \ref{thm:splittingthm}). These facts have representation-theoretic extensions, which we now describe. Let $(Q,S)$ be
any QP, and let $\varphi:R\langle\langle Q_{\ored}\oplus C\rangle\rangle\rightarrow\completeRQ$ be a right equivalence between
$(Q_{\ored},S_{\ored})\oplus(C,T)$ and $(Q,S)$, where $(Q_{\ored},S_{\ored})$ is a reduced QP and $(C,T)$ is a trivial QP. Let
$\mathcal{M}=(Q,S,M,V)$ be a decorated representation, and set $M^\varphi=M$ as $K$-vector space. Define an action of $R\langle\langle
Q_{\operatorname{red}}\rangle\rangle$ on $M^\varphi$ by setting $u_{M^\varphi}=\varphi(u)_M$ for $u\in R\langle\langle
Q_{\operatorname{red}}\rangle\rangle$.

\begin{prop}\cite[Propositions 4.5 and 10.5]{DWZ1}\label{prop:reddetermineduptorequiv} With the action of $R\langle\langle Q_{\operatorname{red}}\rangle\rangle$ on $M^\varphi$ just defined, the quadruple $(Q_{\ored},S_{\ored},M^\varphi,V)$ becomes a QP-representation. Moreover, the right-equivalence class of $(Q_{\ored},S_{\ored},M^\varphi,V)$ is determined by the right-equivalence class of $\mathcal{M}$.
\end{prop}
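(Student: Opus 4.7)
The plan is to treat the two assertions separately. For the first, I would define the action of $R\langle\langle Q_{\ored}\rangle\rangle$ on $M^\varphi := M$ through the composition of the canonical inclusion $R\langle\langle Q_{\ored}\rangle\rangle \hookrightarrow R\langle\langle Q_{\ored}\oplus C\rangle\rangle$ with $\varphi$, followed by the given $\completeRQ$-action on $M$. Finite-dimensionality is immediate, so what has to be verified is that $J(S_{\ored})$ annihilates $M^\varphi$. The arrow span of $Q_{\ored}\oplus C$ decomposes as $A(Q_{\ored})\oplus A(C)$, and by construction $S_{\ored}$ involves only arrows of $Q_{\ored}$, while $T$ involves only arrows of $C$. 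Hence for every arrow $a$ of $Q_{\ored}$ one has $\partial_a(S_{\ored}+T)=\partial_a(S_{\ored})$. Since $\varphi$ is a right-equivalence between $(Q_{\ored},S_{\ored})\oplus(C,T)$ and $(Q,S)$, the Jacobian-ideal invariance in the preceding proposition gives $\varphi(\partial_a(S_{\ored}))\in J(S)$, which annihilates $M$ by hypothesis. Thus the $R\langle\langle Q_{\ored}\rangle\rangle$-action on $M^\varphi$ factors through $\mathcal{P}(Q_{\ored},S_{\ored})$, and $(Q_{\ored},S_{\ored},M^\varphi,V)$ is a bona fide decorated representation.

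For the second assertion I would combine the two kinds of ambiguity (the choice of splitting datum $\varphi$ and the right-equivalence class of $\mathcal{M}$) into one setup: suppose $\Phi=(\phi,\psi,\eta)$ is a right-equivalence between $\mathcal{M}=(Q,S,M,V)$ and $\mathcal{M}'=(Q',S',M',V')$, and let $\varphi\colon R\langle\langle Q_{\ored}\oplus C\rangle\rangle \to \completeRQ$ and $\varphi'\colon R\langle\langle Q'_{\ored}\oplus C'\rangle\rangle \to R\langle\langle Q'\rangle\rangle$ be right-equivalences exhibiting splittings of $(Q,S)$ and $(Q',S')$; the case of two different splittings for the same $\mathcal{M}$ is recovered by taking $\mathcal{M}=\mathcal{M}'$ and $\phi$ the identity. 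Then $\Psi:=(\varphi')^{-1}\circ\phi\circ\varphi$ is a right-equivalence between $(Q_{\ored},S_{\ored})\oplus(C,T)$ and $(Q'_{\ored},S'_{\ored})\oplus(C',T')$, and the vector space isomorphism $\psi\colon M\to M'$ intertwines the induced $R\langle\langle Q_{\ored}\oplus C\rangle\rangle$-actions through $\Psi$.

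The main obstacle is to extract from $\Psi$ a right-equivalence of the reduced summands alone that remains compatible with $\psi$, since $\Psi$ need not respect the direct-sum decomposition. The idea is to mimic the uniqueness argument in the proof of the Splitting Theorem (Theorem \ref{thm:splittingthm}): construct an $R$-algebra automorphism of $R\langle\langle Q'_{\ored}\oplus C'\rangle\rangle$, equal to the identity on $A(Q'_{\ored})$ and adjusting the image of $A(C')$, whose post-composition with $\Psi$ preserves the decomposition into reduced and trivial pieces. Such an automorphism is built by a degree-by-degree induction on the $\idealM$-adic filtration, using the extension criterion \eqref{eq:bimodule-homomorphisms} and the triviality of $(C,T)$ and $(C',T')$. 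After this modification, $\Psi$ restricts to a right-equivalence $\varphi_{\ored}\colon R\langle\langle Q_{\ored}\rangle\rangle \to R\langle\langle Q'_{\ored}\rangle\rangle$ between $(Q_{\ored},S_{\ored})$ and $(Q'_{\ored},S'_{\ored})$, and the intertwining relation for $\psi$ restricts accordingly, because the modification is supported in the trivial part and the reduced-arrow components of the action are unaltered. Together with $\eta$, this produces the desired right-equivalence of decorated representations between $(Q_{\ored},S_{\ored},M^\varphi,V)$ and $(Q'_{\ored},S'_{\ored},M'^{\varphi'},V')$.
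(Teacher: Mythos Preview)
The paper does not give its own proof of this proposition; it simply records it with a citation to Propositions~4.5 and~10.5 of \cite{DWZ1}. So there is nothing in the paper to compare your argument against, and the proposal has to be judged on its own.

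Your treatment of the first assertion is correct: since $S_{\ored}$ involves only arrows of $Q_{\ored}$ and $T$ only arrows of $C$, one has $\partial_a(S_{\ored}+T)=\partial_a(S_{\ored})$ for every arrow $a$ of $Q_{\ored}$, and Jacobian-ideal invariance under $\varphi$ then shows that $J(S_{\ored})$ annihilates $M^\varphi$.

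For the second assertion there is a genuine gap. You propose to straighten $\Psi=(\varphi')^{-1}\phi\varphi$ by post-composing with an automorphism $\alpha$ of $R\langle\langle Q'_{\ored}\oplus C'\rangle\rangle$ that is the identity on $A(Q'_{\ored})$ and only moves $A(C')$. Such an $\alpha$ cannot in general force $\alpha\circ\Psi$ to send $A(Q_{\ored})$ into $R\langle\langle Q'_{\ored}\rangle\rangle$. For a minimal example, take $Q_{\ored}=Q'_{\ored}$ with a single arrow $a\colon 1\to 2$, take $C=C'$ with arrows $b\colon 1\to 2$ and $c\colon 2\to 1$ and $T=T'=bc$, and set $S_{\ored}=S'_{\ored}=0$. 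Then $\Psi$ defined by $a\mapsto a+b$, $b\mapsto b$, $c\mapsto c$ is a right-equivalence of $(Q_{\ored},0)\oplus(C,T)$ with itself. If $\alpha$ fixes $a$, then $\alpha\circ\Psi(a)=a+\alpha(b)$ lies in $R\langle\langle Q'_{\ored}\rangle\rangle$ only if the degree-one part of $\alpha(b)$ lies in $Ka$; but then $\alpha^{(1)}$ is not injective on $e_2Ae_1=Ka\oplus Kb$, contradicting \eqref{eq:bimodule-homomorphisms}. So automorphisms of the shape you describe do not exist in general.

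What actually makes the argument work, and what you do not state, is that the arrows of $C'$ act as zero on $M'^{\varphi'}$: since $(C',T')$ is trivial, $A(C')$ is spanned by the $\partial_c(T')=\partial_c(S'_{\ored}+T')$, which lie in $J(S'_{\ored}+T')$ and hence map under $\varphi'$ into $J(S')$. Thus the action of $R\langle\langle Q'_{\ored}\oplus C'\rangle\rangle$ on $M'^{\varphi'}$ factors through the quotient by the two-sided ideal generated by $A(C')$. Consequently one does not need $\Psi$ to respect the direct-sum decomposition on the nose; it suffices to produce a right-equivalence $\varphi_{\ored}\colon(Q_{\ored},S_{\ored})\to(Q'_{\ored},S'_{\ored})$ that agrees with $\Psi|_{R\langle\langle Q_{\ored}\rangle\rangle}$ modulo that ideal, since the discrepancy then acts trivially on $M'^{\varphi'}$ and the intertwining with $\psi$ survives. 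The inductive construction from the uniqueness part of Theorem~\ref{thm:splittingthm} in \cite{DWZ1} produces exactly such a $\varphi_{\ored}$, but the correcting automorphisms used there move arrows of $Q'_{\ored}$ by elements of the ideal generated by $A(C')$, which is the opposite of what you proposed.
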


The (right-equivalence class of the) QP-representation $\mathcal{M}_{\ored}=(Q_{\ored},S_{\ored},M^\varphi,V)$ is the \emph{reduced part} of $\mathcal{M}$.


We now turn to the representation-theoretic analogue of the notion of QP-mutation. Let $(Q,S)$ be a QP. Fix a vertex $i\in Q_0$, and suppose that $Q$ does not have 2-cycles incident to $i$. Denote by $a_1,\ldots,a_s$ (resp. $b_1,\ldots,b_t$) the arrows ending at $i$ (resp. starting at $i$). Take a QP-representation $\mathcal{M}=(Q,S,M,V)$ and set
$$
M_{\oin}=M_{\oin}(i)=\underset{k=1}{\overset{s}{\bigoplus}}M_{t(a_k)}, \ \ M_{\oout}=M_{\oout}(i)=\underset{l=1}{\overset{t}{\bigoplus}}M_{h(b_l)}.
$$
Multiplication by the arrows $a_1,\ldots,a_s$ and $b_1,\ldots,b_t$ induces $K$-linear maps
$$
\al=\al_i=[a_1 \ \ldots \ a_s]:M_{\oin}\rightarrow M_i, \ \ \be=\be_i=\left[\begin{array}{c}b_1 \\ \vdots \\ b_t\end{array} \right]:M_i\rightarrow M_{\oout}.
$$

For each $k\in[1,s]$ and each $l\in[1,t]$ let $\ga_{k,l}:M_{h(b_l)}\rightarrow M_{t(a_k)}$ be the linear map given by multiplication by the the element $\partial_{[b_la_k]}([S])$, and arrange these maps into a matrix to obtain a linear map $\ga=\ga_i:M_{\oout}\rightarrow M_{\oin}$ (remember that $[S]$ is obtained from $S$ by replacing each $i$-hook $ab$ with the arrow $[ab]$). Since $M$ is a $\jacobqs$-module, we have $\al\ga=0$ and $\ga\be=0$.

Define vector spaces $\overline{M}_j=M_j$ and $\overline{V}_j=V_j$ for $j\in Q_0$, $j\neq i$, and
$$
\overline{M}_i=\frac{\ker\ga}{\image\be}\oplus\image\ga\oplus\frac{\ker\al}{\image\ga}\oplus V_i, \ \ \ \ \overline{V}_i=\frac{\ker\be}{\ker\be\cap\image\al}.
$$

We define an action of the arrows of $\widetilde{\mu}_i(Q)$ on $\overline{M}=\underset{j\in Q_0}{\bigoplus}\overline{M}_j$ as follows. If $c$ is an arrow of $Q$ not incident to $i$, we define $c_{\overline{M}}=c_M$, and for each $k\in[1,s]$ and each $l\in[1,t]$ we set $[b_la_k]_{\overline{M}}=(b_la_k)_M={b_l}_M{a_k}_M$. To define the action of the remaining arrows, choose a linear map $\rh=\rh_i:M_{\oout}\rightarrow\ker\ga$ such that the composition $\ker\ga\hookrightarrow M_{\oout}\overset{\rh}{\rightarrow}\ker\ga$ is the identity (where $\ker\ga\hookrightarrow M_{\oout}$ is the inclusion) and a linear map $\si=\si_i:\frac{\ker\al}{\image\ga}\rightarrow\ker\al$ such that the composition $\frac{\ker\al}{\image\ga}\overset{\si}{\rightarrow}\ker\al\twoheadrightarrow\frac{\ker\al}{\image\ga}$ is the identity (where $\ker\al\twoheadrightarrow\frac{\ker\al}{\image\ga}$ is the canonical projection). Then set
$$
[b_1^* \ \ldots \ b_t^*]=\overline{\al}=\left[\begin{array}{c}-\pipi\rh \\ -\ga \\ 0 \\ 0\end{array}\right]:M_{\oout}\rightarrow\overline{M}_i, \ \ \ \  \left[\begin{array}{c}a_1^* \\ \vdots \\ a_s^*\end{array} \right]=\overline{\be}=[0 \ \io \ \io\si \ 0]:\overline{M}_i\rightarrow M_{\oin},
$$
where $\pipi:\ker\ga\rightarrow\frac{\ker\ga}{\image\be}$ is the canonical projection and $\io:\ker\al\rightarrow M_{\oin}$ is the inclusion.

Since $\idealM^{r}M=0$ for some sufficiently large $r$, this action of the arrows of $\widetilde{\mu}_i(Q)$ on $\overline{M}$ extends uniquely to an action of $R\langle\langle\widetilde{\mu}_i(Q)\rangle\rangle$ under which $\overline{M}$ is an $R\langle\langle\widetilde{\mu}_i(Q)\rangle\rangle$-module.

\begin{remark} Note that the choice of the linear maps $\rh$ and $\si$ is not canonical. However, different choices lead to isomorphic $R\langle\langle\widetilde{\mu}_i(Q)\rangle\rangle$-module structures on $\overline{M}$.
\end{remark}

\begin{defi} With the above action of $R\langle\langle\widetilde{\mu}_i(Q)\rangle\rangle$ on $\overline{M}$ and the obvious action of $R$ on $\overline{V}=\bigoplus_{j\in Q_0}\overline{V}_j$, the quadruple $(\widetilde{\mu}_i(Q),\widetilde{\mu}_i(S),\overline{M},\overline{V})$ is called the \emph{premutation}  of $\mathcal{M}=(Q,S,M,V)$ in direction $j$, and denoted $\widetilde{\mu}_i(\mathcal{M})$. The \emph{mutation} of $\mathcal{M}$ in direction $i$, denoted by $\mu_i(\mathcal{M})$, is the reduced part of $\widetilde{\mu}_i(\mathcal{M})$.
\end{defi}

The following are important properties of mutations of QP-representations. Proofs can be found in Propositions 10.7 and 10.10, Corollary 10.12, and Theorem 10.13 of \cite{DWZ1}.

\begin{thm}\begin{enumerate}\item Premutations and mutations are well defined up to right-equivalence.
\item Mutations of QP-representations are involutive up to right-equivalence.
\end{enumerate}
\end{thm}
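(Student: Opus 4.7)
My plan is to prove part (1) in three stages—independence of the premutation from the choice of splittings $\rh,\si$, invariance under replacing $S$ by a cyclically equivalent potential, and functoriality under right-equivalences $(\varphi,\psi,\eta)$—and then to deduce part (2) by writing down an explicit isomorphism between the twice-premutated representation and the direct sum of $\mathcal{M}$ with a representation supported on a trivial QP.

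For the choice of $\rh$, two admissible splittings $\rh,\rh'\colon M_{\oout}\to\ker\ga$ differ by a linear map $M_{\oout}\to\image\be$, and this discrepancy is absorbed into an automorphism of $\overline{M}_i$ that acts as the identity on the summands $\image\ga$, $\ker\al/\image\ga$, $V_i$ and modifies only $\ker\ga/\image\be$; one checks it commutes with the action of $\premuti(Q)$ on $\overline{M}$ defined by $\overline{\al},\overline{\be}$. The argument for $\si$ is dual. Invariance under cyclic equivalence of $S$ is immediate, since the maps $\ga_{k,l}$ are given by multiplication by $\partial_{[b_la_k]}([S])$, which depends only on the cyclic equivalence class. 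For functoriality, the QP-component $\varphi$ of a right-equivalence lifts to a right-equivalence $\widetilde{\varphi}$ between $\premuti(Q,S)$ and $\premuti(Q',S')$ by Theorem~\ref{thm:propertiesofQP-mutations}(1); the $\psi$-component identifies $\al,\be,\ga$ with their primed analogues, inducing an isomorphism $\overline{\psi}\colon\overline{M}\to\overline{M'}$ that intertwines the actions once compatible splittings are chosen on both sides (permissible by the first step). Well-definedness of $\muti(\mathcal{M})$ then follows by passing to reduced parts via Proposition~\ref{prop:reddetermineduptorequiv}.

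For part (2), I would compute the double premutation of $\mathcal{M}$ directly. At the QP level, Theorem~\ref{thm:propertiesofQP-mutations}(2) combined with the splitting theorem~\ref{thm:splittingthm} refines to a right-equivalence $\premuti\premuti(Q,S)\simeq(Q,S)\oplus(L_i,T_i)$ with $(L_i,T_i)$ a trivial QP. The representation-theoretic statement I aim to establish is that $\overline{\overline{M}}$, equipped with the module structure induced by the data $\overline{\al},\overline{\be},\overline{\ga}$ of the first premutation, splits canonically as $M\oplus N_i$, where $N_i$ is a representation supported on $(L_i,T_i)$ on which the $L_i$-arrows act by isomorphisms. Tracking the four summands of $\overline{M}_i$ through the definitions of $\overline{\overline{\al}},\overline{\overline{\be}},\overline{\overline{\ga}}$ should show that $\ker\overline{\ga}/\image\overline{\be}\cong M_i$ canonically, while the remaining summands pair off, via the splittings at stage two, to form $N_i$; the $V_i$-component passes through unchanged. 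Taking reduced parts via Proposition~\ref{prop:reddetermineduptorequiv} then gives $\muti\muti(\mathcal{M})\simeq\mathcal{M}$.

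The step I expect to be most delicate is the explicit splitting $\overline{\overline{M}}_i\cong M_i\oplus N_i$: one must carefully match the four canonical pieces produced by the second premutation with the images and kernels of the original $\al,\be,\ga$, using the splittings $\rh,\si$ chosen at both stages, and then confirm that the action of the arrows of $Q$ incident to $i$ reconstructed from the double premutation recovers the original $\al,\be$ up to the trivial summand $N_i$ that disappears after reduction.
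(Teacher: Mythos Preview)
The paper does not prove this theorem: it is quoted from \cite{DWZ1}, with the line ``Proofs can be found in Propositions 10.7 and 10.10, Corollary 10.12, and Theorem 10.13 of \cite{DWZ1}'' standing in for a proof. Your outline is essentially a sketch of the argument in \cite{DWZ1}, so at the level of strategy you are on the right track; let me flag two places where your sketch is imprecise.

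First, your description of how two retractions $\rh,\rh'$ differ is off. Both satisfy $\rh|_{\ker\ga}=\operatorname{id}$, so $\rh-\rh'$ vanishes on $\ker\ga$ and hence factors through $M_{\oout}/\ker\ga\cong\image\ga$; its image lies in $\ker\ga$, not in $\image\be$ as you wrote. Concretely, $\rh-\rh'=f\circ\ga$ for some $f\colon\image\ga\to\ker\ga$, and the automorphism of $\overline{M}_i$ that intertwines the two module structures is $\operatorname{id}+E$ where $E$ has a single nonzero block $E_{12}=-\pipi f\colon\image\ga\to\ker\ga/\image\be$. So the correction lives in the off-diagonal $(1,2)$-block, not in the $(1,1)$-block you described; your phrase ``modifies only $\ker\ga/\image\be$'' is salvageable only if read as ``$\Phi-\operatorname{id}$ has image in $\ker\ga/\image\be$''.

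Second, the functoriality step is more delicate than your sketch suggests. A right-equivalence $\varphi$ need not send arrows to arrows, only to elements of $\idealM$ whose degree-$1$ part is invertible as an $R$-bimodule map; so ``$\psi$ identifies $\al,\be,\ga$ with their primed analogues'' is not immediate. In \cite{DWZ1} this is handled by first reducing to the case where $\varphi$ is a unitriangular automorphism (degree-$1$ part the identity), and even then one must check that the induced map on $\overline{M}$ really intertwines the $\premuti(Q)$-actions, which involves the cyclic Leibniz rule for $\partial_{[b_la_k]}$. Your plan for part~(2) matches \cite[Theorem~10.13]{DWZ1} and your honest identification of the delicate point---matching the four summands of $\overline{\overline{M}}_i$ with $M_i$ plus a trivial piece---is exactly where the work lies.
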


To close the current subsection, we recall Derksen-Weyman-Zelevinsky's definition of the $\g$-vector, the $F$-polynomial and the $E$-invariant of a QP-representation (cf. \cite{DWZ2}). Let $(Q,S)$ be a non-degenerate QP defined over the field $\mathbb{C}$ of complex numbers, and $\mathcal{M}=(M,V)$ be a decorated $(Q,S)$-representation. The $\g$-vector and $F$-polynomial of $\mathcal{M}$ are defined as follows. For $i\in Q_0$, the $i^{\operatorname{th}}$ entry of the $\g$-vector $\g_{\mathcal{M}}$ of $\mathcal{M}$ is
\begin{equation}
g_i^{\mathcal{M}}=\dim\ker\ga_{i}-\dim M_{i}+\dim V_{i}.
\end{equation}
The $F$-polynomial $F_{\mathcal{M}}$ of $\mathcal{M}$ is
\begin{equation}
F_{\mathcal{M}}=\sum_{\e\in\mathbb{N}^{Q_0}}\chi(\Gr_{\e}(M))\X^{\e},
\end{equation}
where $\X$ is a set of indeterminates, indexed by $Q_0$, that are algebraically independent over $\mathbb{Q}$, $\Gr_{\e}(M)$ is the \emph{quiver Grassmannian} of subrepresentations of $M$ with dimension vector $\e$, and $\chi$ is the Euler-Poincar\'e characteristic.

Let $\mathcal{N}=(N,W)$ be a second decorated representation of $(Q,S)$. Define the integer 
\begin{equation}\label{Eq:DefEInj}
E^{inj}(\mathcal{M},\mathcal{N})=\operatorname{dim}\operatorname{Hom}_{\mathcal{P}(A,S)}(M,N)+\mathbf{dim}(M)\cdot\mathbf{g}_{\mathcal{N}}
\end{equation}
where $\mathbf{dim}(M)=(\operatorname{dim }M_i)_{i\in Q_0}$ is the dimension vector of the positive part $M$ of $\mathcal{M}$, and $\cdot$ denotes the usual scalar product of vectors. The \emph{$E$-invariant} of $\mathcal{M}$ is then defined to be the integer
$$
E(\mathcal{M})=E^{inj}(\mathcal{M},\mathcal{M}).
$$
According to \cite[theorem~7.1]{DWZ2}, this number is invariant under mutations of QP-representations, that is, $E(\mu_i(\mathcal{M}))=E(\mathcal{M})$ for any decorated representation $\mathcal{M}$. Notice that $E(\mathcal{N})=0$ for every negative QP-representation. Hence $E(\mathcal{M})=0$ for every QP-representation mutation that can be obtained from a negative one by performing a finite sequence of mutations.

If the potential $S$ satisfies an additional ``admissibility condition", the $E$-invariant possesses a remarkable homological interpretation. We will study this in Section \ref{sec:admissibility}.

\subsection{Relation between cluster algebras and QP-representations}

The articles \cite{FZ4} and \cite{DWZ2} are our main references for this subsection, throughout which  $B$ will be an $n\times n$ skew-symmetric integer matrix, and $\widetilde{B}$ will be the $2n\times n$ matrix whose top $n\times n$ submatrix is $B$ and whose bottom $n\times n$ submatrix is the identity matrix. Put a seed $(\widetilde{B},\x)$ as the initial seed of a cluster pattern, being $t_0$ the initial vertex of $\TT$. In \cite{FZ4}, Fomin-Zelevinsky introduce a $\Z^n$-grading for $\Z[x_1^{\pm1},\ldots,x_n^{\pm1},x_{n+1},\ldots,x_{2n}]$ defined by the formulas
$$
\deg(x_l)=\e_l, \ \ \text{and} \ \ \deg(x_{n+l})=-\mathbf{b}_l \ \ \text{for} \ l=1,\ldots,n
$$
where $\e_1,\ldots,\e_n$ are the standard basis (column) vectors in $\Z^n$, and $\mathbf{b}_l=\sum_kb_{kl}\e_k$ is the $l^{\operatorname{th}}$ column of the $n\times n$ top part $B$ of $\widetilde{B}$. Under this $\Z^n$-grading, all cluster variables in the principal-coefficient cluster algebra $\mathcal{A}_{\bullet}(B)=\myAA(\widetilde{B})$ are homogeneous elements (cf. \cite[Proposition 6.1 and Corollary 6.2]{FZ4}). By definition, the \emph{$\g$-vector} $\g_x^{\x}$ of a cluster variable $x\in \mathcal{A}_{\bullet}(B)$ with respect to the initial cluster $\x$ is its multi-degree with respect to the $\Z^n$-grading just defined.

For each vertex $t$ of $\TT$ and each index $l\in[1,n]$, let us denote by $X^{B;t_0}_{k;t}$ the $k^{th}$ cluster variable from the cluster attached to the vertex $t$ by the cluster pattern under current consideration. By \eqref{eq:cluster-mutation}, $X^{B;t_0}_{k;t}$ is a rational function in the initial cluster $\x$ with coefficients in the group semiring $\Z_{\geq 0}\textrm{Trop}(x_{n+1},\ldots,x_{2n})$. The \emph{$F$-polynomial}
$F^{B;t_0}_{k;t}$ is defined to be the result of specializing $x_{1},\ldots,x_{n}$, to $1$ in the rational function $X^{B;t_0}_{k;t}$.

According to \cite[Corollary 6.3]{FZ4}, for $k=1,\ldots,n$ we have the following \emph{separation of additions}.

\begin{thm}\label{thm:FZ-separation-of-variables} Let $\semifield$ be any semifield and let $\mathcal{A}$ be a cluster algebra over the ground ring $\Z\semifield$, contained in the ambient field $\myFF$, with the matrix $B$, the cluster $\x$ and the coefficient tuple $\mathbf{y}$ placed at the initial seed of the corresponding cluster pattern. Let $t$ be a vertex of $\TT$ and $\x'$ be the cluster attached to $t$ by the alluded cluster pattern. Then the $k^{\operatorname{th}}$ cluster variable from $\x'$ has the following expression in terms of the initial cluster $\x$:
\begin{equation}\label{eq:general-separation-formula}
x'_{k}=\frac{F_{k;s}^{B';t}|_{\myFF}(\widehat{y}_1,\ldots,\widehat{y}_n)}{F_{k;s}^{B';t}|_{\semifield}(y_1,\ldots,y_n)}
x_1^{g_{1,k}}\ldots x_n^{g_{n,k}},
\end{equation}
$$
\text{where} \ \ \ \widehat{y}_j=y_j\prod_{i=1}^{n}x_i^{b_{ij}} \ \ \ \text{and} \ \ \ \g_{x'_k}^{\x}=\left[\begin{array}{c}g_{1,k}\\ \vdots\\ g_{n,k}\end{array}\right].
$$
\end{thm}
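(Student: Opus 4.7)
The plan is to first establish the formula in the principal coefficient cluster algebra $\mathcal{A}_{\bullet}(B)$, and then transfer it to an arbitrary coefficient system in $\semifield$ via a universality argument. In the principal case the relevant semifield is $\Trop(x_{n+1},\ldots,x_{2n})$ with $y_l=x_{n+l}$, and tropical evaluation of an $F$-polynomial (which one can show has nonnegative integer coefficients and constant term $1$) collapses to the monomial of minimal multi-degree in its support, namely $1$. Consequently the target formula reduces in that setting to the identity $X^{B;t_0}_{k;t}=x_1^{g_{1,k}}\cdots x_n^{g_{n,k}}\,F^{B;t_0}_{k;t}(\widehat{y}_1,\ldots,\widehat{y}_n)$, which is the version of the separation formula for principal coefficients.

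The heart of the argument is to verify that every $X^{B;t_0}_{k;t}\in\mathcal{A}_{\bullet}(B)$ is homogeneous for the $\Z^n$-grading in which $\deg(x_l)=\e_l$ and $\deg(x_{n+l})=-\mathbf{b}_l$. I would prove this by induction on the distance from $t_0$ to $t$ in $\TT$: the base case is immediate, and for the inductive step, given an adjacent vertex $t'$ joined to $t$ by an edge labeled $i$, the exchange relation takes the form $x_{i;t'}\,x_{i;t}=M_{+}+M_{-}$, where $M_\pm$ are monomials in the (inductively homogeneous) data at $t$; it then suffices to check that $M_{+}$ and $M_{-}$ share the same $\Z^n$-degree, a direct calculation using the mutation rule for the extended exchange matrix which hinges on the fact that the bottom rows of $\widetilde{B}$ mutate compatibly with the defining identity $\deg(x_{n+l})=-\mathbf{b}_l$. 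Once homogeneity of multi-degree $\g=\g_{x'_k}^{\x}$ is established, expanding $X^{B;t_0}_{k;t}=\sum_{\nu,\mu}c_{\nu,\mu}\x^{\nu}\prod_l x_{n+l}^{\mu_l}$ with $\nu\in\Z^n$ and $\mu\in\mathbb{N}^n$, the constraint $\nu=\g+B\mu$ forces the factorization $X^{B;t_0}_{k;t}=\x^{\g}\sum_\mu c_{\g+B\mu,\mu}\prod_l\widehat{y}_l^{\mu_l}$; specializing $x_i\mapsto 1$ identifies the polynomial factor with $F^{B;t_0}_{k;t}$, closing the principal case.

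To transfer the result to an arbitrary coefficient system I would invoke the universality of principal coefficients: the coefficient tuple $\y$ in $\semifield$ arises as the image of the principal coefficients under the unique semifield homomorphism $\Trop(x_{n+1},\ldots,x_{2n})\to\semifield$ sending $x_{n+l}\mapsto y_l$, and applying this homomorphism to the principal formula should yield the general one. The main obstacle is precisely that this semifield homomorphism does not extend directly to the surrounding polynomial ring, since $\semifield$ carries no additive structure compatible with $\Z$; one must therefore carefully separate the additive (field-theoretic) and tropical (semifield-theoretic) contributions. The denominator $F^{B;t_0}_{k;t}|_{\semifield}(y_1,\ldots,y_n)$ emerges as the tropical correction needed to ensure that the resulting expression lands in $\myAA$ and coincides with $x'_k$; making this rigorous would require a parallel induction on distance from $t_0$ in $\TT$, tracking how each exchange relation in $\myAA$ inherits the tropical denominator from the corresponding exchange relation in $\mathcal{A}_{\bullet}(B)$, and verifying that $F^{B;t_0}_{k;t}|_{\semifield}$ is always a Laurent monomial in the generators of $\semifield$ so that division by it makes sense inside $\Z\semifield$.
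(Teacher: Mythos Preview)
The paper does not give its own proof of this theorem: it is quoted verbatim as background, attributed to \cite[Corollary~6.3]{FZ4}. So there is nothing to compare your argument against within this paper.

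That said, your outline is essentially the strategy carried out in \cite{FZ4}: homogeneity of principal-coefficient cluster variables is \cite[Proposition~6.1]{FZ4}, the factorization $X=\x^{\g}F(\widehat{y})$ is \cite[Corollary~6.3]{FZ4}, and the passage to arbitrary semifields is \cite[Theorem~3.7]{FZ4}. One small correction: at the end you say one must verify that $F|_{\semifield}(y_1,\ldots,y_n)$ is a Laurent monomial so that division makes sense. This is not what is needed and is not true in a general semifield; rather, $F|_{\semifield}(y_1,\ldots,y_n)$ is simply an element of the abelian group $\semifield$, and division by any element of $\semifield$ is automatic. The genuine content of the transfer step is the induction in \cite[Theorem~3.7]{FZ4} showing that the displayed quotient reproduces the exchange relations in $\mathcal{A}$, which you allude to but do not carry out.
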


By the following theorem of Derksen-Weyman-Zelevinsky, $\g$-vectors and $F$-polynomials of QP-representations provide a fundamental representation-theoretic interpretation of $\g$-vectors and $F$-polynomials of cluster variables.

\begin{thm}\cite[Theorem 5.1]{DWZ2}\label{thm:DWZ-gvectors-Fpoynomials} Let $B$ and $\widetilde{B}$ be as in the beginning of the subsection. Attach a seed $(\widetilde{B},\x)$ to the initial vertex $t_0$ of $\TT$ and consider the resulting cluster pattern. For every vertex $t$ and every $k\in[1,n]$ we have
$$
F^{B;t_0}_{k;t}=F_{\mathcal{M}^{B;t_0}_{k;t}} \ \ \ \text{and} \ \ \ \g^{\x}_{X^{B;t_0}_{k;t}}=\g_{\mathcal{M}^{B;t_0}_{k;t}},
$$
where $\mathcal{M}^{B;t_0}_{k;t}$ is a QP-representation defined as follows. Let $t_0\frac{\phantom{xxx}}{i_1}t_1\ldots t_{m-1}\frac{\phantom{xxx}}{i_m}t$ be the unique path on $\TT$ that connects $t$ with $t_0$. Over the field $\mathbb{C}$ of complex numbers, let $S$ be a non-degenerate potential on the quiver $Q=Q(B)$ and $(Q_t,S_t)$ be the QP obtained by applying the QP-mutation sequence $\mu_{i_1},\ldots,\mu_{i_m}$, to $(Q,S)$. Then
$$
\mathcal{M}^{B;t_0}_{k,t}=\mu_{1_1}\mu_{i_2}\ldots\mu_{im}\left(\mathcal{S}^-_k(Q_t,S_t)\right).
$$
\end{thm}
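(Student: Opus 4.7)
The natural approach is induction on the graph-theoretic distance $m$ between $t$ and $t_0$ in $\TT$.

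For the base case $m=0$ (so $t=t_0$), the cluster variable $X^{B;t_0}_{k;t_0}=x_k$ has $F$-polynomial equal to $1$ and $\g$-vector equal to $\e_k$. On the representation side, $\mathcal{M}^{B;t_0}_{k;t_0}=\mathcal{S}^-_k(Q,S)$ is the negative simple at $k$, i.e.\ $M=0$ and $V$ concentrated at $k$ with $\dim V_k=1$. A direct inspection of the definitions then yields $F_{\mathcal{S}^-_k}=1$ (only the zero dimension vector contributes, via $\chi(\operatorname{pt})=1$) and $g_i^{\mathcal{S}^-_k}=\dim\ker\ga_i-\dim M_i+\dim V_i=\delta_{i,k}$, whence $\g_{\mathcal{S}^-_k}=\e_k$.

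For the inductive step, suppose both identities hold at every vertex at distance strictly less than $m$, and let $t$ be at distance $m$ from $t_0$, joined to some $t'$ at distance $m-1$ by an edge labeled $j$. For indices $k\neq j$, both the cluster variable $X^{B;t_0}_{k;t}=X^{B;t_0}_{k;t'}$ and the QP-representation (up to right-equivalence, by Proposition \ref{prop:reddetermineduptorequiv}) are unchanged, so the conclusion is immediate from the inductive hypothesis. The essential work lies in the case $k=j$, where one must match the exchange relation producing $X^{B;t_0}_{j;t}$ from the seed at $t'$ with the action of $\mu_j$ on the QP-representation at $t'$. This matching rests on two transformation formulas at the representation level. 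The $\g$-vector transformation formula, comparing $\g_{\mu_j(\mathcal{M})}$ with $\g_{\mathcal{M}}$, follows rather cleanly from a rank computation using the explicit decomposition $\overline{M}_j=\ker\ga/\image\be\oplus\image\ga\oplus\ker\al/\image\ga\oplus V_j$ and $\overline{V}_j=\ker\be/(\ker\be\cap\image\al)$, together with the relations $\al\ga=0$ and $\ga\be=0$; one verifies it reproduces exactly the tropical transformation law that Fomin-Zelevinsky establish for $\g$-vectors of cluster variables.

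The main obstacle is the corresponding $F$-polynomial transformation formula, an identity of the schematic shape
$$
F_{\mu_j(\mathcal{M})}(\y)\cdot(\text{monomial in }\y) \ = \ F_{\mathcal{M}}(\y)\cdot(\text{monomial in }\y) \ + \ (\text{correction involving }y_j),
$$
which, after the specialization $\widehat{y}_i=y_i\prod_k x_k^{b_{ki}}$ dictated by \eqref{eq:general-separation-formula}, must reproduce the exchange relation \eqref{eq:cluster-mutation}. Establishing such an identity requires stratifying each quiver Grassmannian $\Gr_{\e}(\overline{M})$ according to the dimensions of the intersections of a subrepresentation with the four direct summands of $\overline{M}_j$, showing that each stratum fibers as an affine-space bundle over a product of quiver Grassmannians of $M$, and then collapsing the resulting Euler-characteristic computation to a polynomial identity via additivity of $\chi$ and standard binomial identities applied to the trivial-arrow part $\image\ga\oplus V_j$. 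Once both transformation formulas are in hand, substitution into the separation-of-additions identity of Theorem \ref{thm:FZ-separation-of-variables} together with the inductive hypothesis at $t'$ yields the desired equalities $F^{B;t_0}_{j;t}=F_{\mathcal{M}^{B;t_0}_{j;t}}$ and $\g^{\x}_{X^{B;t_0}_{j;t}}=\g_{\mathcal{M}^{B;t_0}_{j;t}}$, completing the induction.
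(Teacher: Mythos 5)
The statement you are proving is not proved in the paper at all: it is quoted verbatim as a cited result of Derksen--Weyman--Zelevinsky, namely \cite[Theorem 5.1]{DWZ2}, and the paper offers no argument for it. So there is no "paper's own proof" to compare against. What you have produced is instead an outline of the strategy of the \emph{cited} proof in \cite{DWZ2}, and at that level your account is essentially the right one: the base case on negative simples, the reduction of the inductive step to transformation rules for $\g$-vectors and $F$-polynomials under $\mu_j$, and the geometric input (stratification of quiver Grassmannians, fibering over products of Grassmannians of the reduced part, additivity of $\chi$) all correspond to the actual content of \cite{DWZ2} and to the earlier $F$-polynomial recursion proved there.

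One imprecision in your inductive step is worth flagging, because as written the $k=j$ case does not reduce cleanly to the inductive hypothesis at $t'$. You say the essential work is to "match the exchange relation producing $X^{B;t_0}_{j;t}$ from the seed at $t'$ with the action of $\mu_j$ on the QP-representation at $t'$," as if $\mathcal{M}^{B;t_0}_{j;t}$ were $\mu_j$ of $\mathcal{M}^{B;t_0}_{j;t'}$. It is not: both are decorated representations of the \emph{same} QP $(Q,S)$ at $t_0$, so $\mu_j$ cannot carry one to the other, and unwinding the definitions shows that $\mathcal{M}^{B;t_0}_{j;t}=\mu_{i_1}\cdots\mu_{i_{m-1}}\bigl(\mathcal{S}^{+}_j(Q_{t'},S_{t'})\bigr)$ whereas $\mathcal{M}^{B;t_0}_{j;t'}=\mu_{i_1}\cdots\mu_{i_{m-1}}\bigl(\mathcal{S}^{-}_j(Q_{t'},S_{t'})\bigr)$; these are images of the \emph{positive} and \emph{negative} simple at $j$ under the same mutation sequence, not mutations of each other at $j$. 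The way \cite{DWZ2} actually closes the argument is to establish that the $F$-polynomials and $\g$-vectors of the family $(\mathcal{M}^{B;t_0}_{k;t})_k$ satisfy, as $t$ varies along an edge $j$, the \emph{same} recurrences that \cite{FZ4} prove for the cluster-side quantities $F^{B;t_0}_{k;t}$ and $\g^{\x}_{X^{B;t_0}_{k;t}}$; the $k=j$ case is then handled by this recursion together with the relation between the positive and negative simples at $j$, not by literally applying $\mu_j$ to $\mathcal{M}^{B;t_0}_{j;t'}$. Your sketch is otherwise faithful to the strategy, but this point needs to be tightened before it would stand as a proof.
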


\section{Triangulations of surfaces}\label{sec:triangulations}

For the convenience of the reader, and in order to be as self-contained as possible, we briefly review the material on tagged triangulations of surfaces and their signed adjacency quivers and flips. 
Our main reference for this section is \cite{FST}.

\begin{defi}\label{def:surf-with-marked-points} A \emph{bordered surface with marked points}, or simply a \emph{surface}, is a pair $\surf$, where $\surfnoM$ is a compact connected oriented Riemann surface with (possibly empty) boundary, and $\marked$ is a finite set of points on $\surfnoM$, called \emph{marked points}, such that $\marked$ is non-empty and has at least one point from each connected component of the boundary of $\surfnoM$. The marked points that lie in the interior of $\surfnoM$ are called \emph{punctures}, and the set of punctures of $\surf$ is denoted $\punct$. Throughout the paper we will always assume that:
\begin{itemize}
\item $\surfnoM$ has non-empty boundary;
\item $\surf$ is not an unpunctured monogon, digon or triangle, nor a once-punctured monogon or digon.
\end{itemize}
Here, by a monogon (resp. digon, triangle) we mean a disk with exactly one (resp. two, three) marked point(s) on the boundary.
\end{defi}



\begin{defi}\label{def:ordinaryarcs-compatibility-idealtriangulations} Let $\surf$ be a surface.
\begin{enumerate}\item An \emph{ordinary arc}, or simply an \emph{arc} in $\surf$, is a curve $\arc$ in $\surfnoM$ such that:
\begin{itemize}
\item the endpoints of $\arc$ belong to $\marked$;
\item $\arc$ does not intersect itself, except that its endpoints may coincide;
\item the relative interior of $\arc$ is disjoint from $\marked$ and from the boundary of $\surfnoM$;
\item $\arc$ does not cut out an unpunctured monogon nor an unpunctured digon.
\end{itemize}
\item An arc whose endpoints coincide will be called a \emph{loop}.
\item Two arcs $\arc_1$ and $\arc_2$ are \emph{isotopic} rel $\marked$ if there exists an isotopy $H:I\times\surfnoM\rightarrow\surfnoM$ such that $H(0,x)=x$ for all $x\in\surfnoM$, $H(1,\arc_1)=\arc_2$, and $H(t,m)=m$ for all $t\in I$ and all $m\in\marked$). Arcs will be considered up to isotopy rel $\marked$ and orientation. We denote the set of arcs in $\surf$, considered up to isotopy rel $\marked$ and orientation, by $\arcsinsurf$.
\item Two arcs are \emph{compatible} if there are arcs in their respective isotopy classes whose relative interiors do not intersect.
\item An \emph{ideal triangulation} of $\surf$ is any maximal collection of pairwise compatible arcs whose relative interiors do not intersect each other.
\end{enumerate}
\end{defi}

The next proposition says that the pairwise compatibility of any collection of arcs can be simultaneously realized.

\begin{prop}\cite{FHS} Given any collection of pairwise compatible arcs, it is always possible to find representatives in their isotopy classes whose relative interiors do not intersect each other.
\end{prop}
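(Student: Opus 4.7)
The plan is to invoke Freedman--Hass--Scott's simultaneous realization theorem by passing to geodesic representatives in a suitable hyperbolic metric on $\surfnoM$. Since the excluded cases in Definition \ref{def:surf-with-marked-points} are precisely those for which the surface does not admit a hyperbolic structure of the required type, we can equip $\surfnoM$ with a complete, finite-area hyperbolic metric having geodesic boundary, cusps at each puncture $p\in\punct$, and corners at each marked point on the boundary (so that the boundary between two consecutive marked points becomes a geodesic arc). Under this metric, every isotopy class rel $\marked$ of arcs in $\arcsinsurf$ contains a unique geodesic representative: one minimizes length in the class, and endpoints condense into the cusps/corners to which they are asymptotic.

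Next, for each arc $\arc$ in the given collection, choose the geodesic representative $\arc^\mathrm{geo}$. The key claim is that if $\arc_1$ and $\arc_2$ are compatible, then $\arc_1^\mathrm{geo}$ and $\arc_2^\mathrm{geo}$ already have disjoint relative interiors. This is the content of the Freedman--Hass--Scott theorem: given any representatives of $\arc_1,\arc_2$ with disjoint interiors (which exist by compatibility), one lifts the configuration to the universal cover $\mathbb{H}^2$ and observes that any transverse intersection point of $\arc_1^\mathrm{geo}$ and $\arc_2^\mathrm{geo}$ would correspond to a pair of lifted geodesics that cross. Such crossing lifts would necessarily cobound an innermost embedded bigon with the lifts of the disjoint representatives, and the classical bigon-removal argument in hyperbolic geometry (two distinct geodesics in $\mathbb{H}^2$ meet in at most one point) forces a contradiction unless the geodesics coincide.

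Applying this pairwise to every pair of arcs in the collection, we conclude that the family $\{\arc^\mathrm{geo}\}_{\arc}$ consists of arcs whose relative interiors are pairwise disjoint, which is the desired simultaneous realization. The main technical point where care is needed is the boundary/cusp behaviour: because marked points on the boundary are not interior cusps, one must verify that the uniqueness of the geodesic representative and the bigon argument remain valid when endpoints lie on the boundary or at a puncture; this is handled by working in the standard ``doubled'' hyperbolic surface or equivalently by using the Nielsen--Thurston setup for surfaces with cusps and corners. Since these adaptations of Freedman--Hass--Scott's argument are exactly what is carried out in \cite{FHS}, we can invoke the result directly once the above hyperbolic model is set up.
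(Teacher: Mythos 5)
The paper offers no proof of this proposition; it is stated as a known fact with a bare citation to Freedman--Hass--Scott \cite{FHS}, so there is no in-paper argument to compare against. Your outline is the standard hyperbolic-geometry realization proof that underlies that citation (and that Fomin--Shapiro--Thurston invoke as well), so the approach is the right one. Two points deserve tightening, though. First, your claim that the excluded surfaces of Definition~\ref{def:surf-with-marked-points} are ``precisely those for which the surface does not admit a hyperbolic structure of the required type'' is not accurate: the once-punctured digon, for instance, is excluded in this paper yet does carry a suitable complete hyperbolic metric with a cusp and boundary spikes; the exclusions are made for cluster-algebraic reasons, not because the geometry fails. What one actually needs, and should state, is that every surface under consideration admits a complete finite-area hyperbolic metric with geodesic boundary, interior cusps at punctures and boundary spikes at marked boundary points, which is standard for the surfaces kept by Definition~\ref{def:surf-with-marked-points}. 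Second, the bigon step is garbled as written: the phrase ``crossing lifts would necessarily cobound an innermost embedded bigon with the lifts of the disjoint representatives'' doesn't describe a coherent configuration, and invoking ``two distinct geodesics in $\mathbb{H}^2$ meet in at most one point'' does not by itself give a contradiction from a single crossing. The clean version is the endpoint-linking argument in the universal cover: a lift $\tilde\gamma_i$ of each geodesic has the same endpoints (on $\partial\mathbb{H}^2$ together with the lifted boundary geodesics and spikes) as the corresponding lift $\tilde\alpha_i$ of the chosen disjoint representative; if $\tilde\gamma_1$ and $\tilde\gamma_2$ crossed, their endpoint pairs would be linked, forcing $\tilde\alpha_1$ and $\tilde\alpha_2$ to cross as well by the Jordan curve theorem, contradicting the disjointness of $\alpha_1$ and $\alpha_2$ downstairs (hence of all their lifts). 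With these two corrections, your sketch faithfully reproduces the argument that \cite{FHS} makes rigorous.
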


\begin{defi}\label{def:types-of-ideal-triangles} Let $\tau$ be an ideal triangulation of a surface $\surf$.
\begin{enumerate}\item For each connected component of the complement in $\surfnoM$ of the union of the arcs in $\tau$, its topological closure $\triangle$ will be called an \emph{ideal triangle} of $\tau$.
\item An ideal triangle $\triangle$ is called \emph{interior} if its intersection with the boundary of $\surfnoM$ consists only of (possibly none) marked points. Otherwise it will be called \emph{non-interior}.
\item An interior ideal triangle $\triangle$ is \emph{self-folded} if it contains exactly two arcs of $\tau$ (see Figure \ref{fig:selffoldedtriang}).
\end{enumerate}
\end{defi}

        \begin{figure}[!h]
                \caption{Self-folded triangle}\label{fig:selffoldedtriang}
                \centering
                \includegraphics[scale=.4]{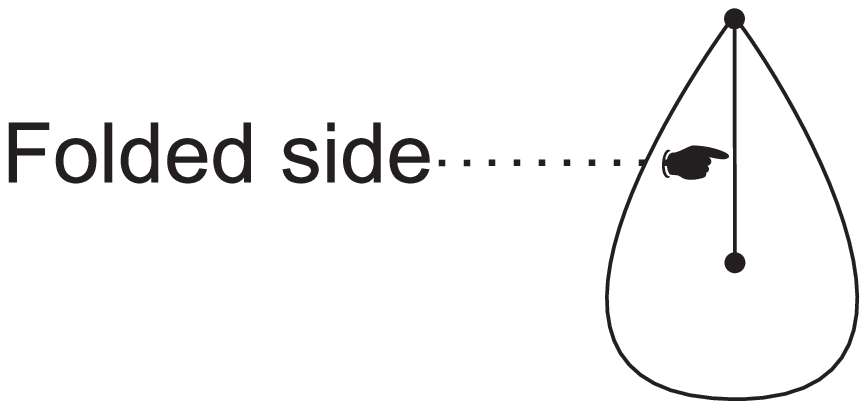}
        \end{figure}

The number $n$ of arcs in an ideal triangulation of $\surf$ is determined by the genus $g$ of $\surfnoM$, the number $b$ of boundary components
of $\surfnoM$, the number $p$ of punctures and the number $c$ of marked points on the boundary of $\surfnoM$, according to the formula
$n=6g+3b+3p+c-6$, which can be proved using the definition and basic properties of the Euler characteristic. Hence $n$ is an invariant of
$\surf$, called the \emph{rank} of $\surf$ (because it coincides with the rank of the cluster algebra associated to $\surf$, see \cite{FST}).

Let $\tau$ be an ideal triangulation of $\surf$ and let $\arc\in\tau$ be an arc. If $\arc$ is not the folded side of a self-folded triangle,
then there exists exactly one arc $\arc'$, different from $\arc$, such that $\sigma=(\tau\setminus\{\arc\})\cup\{\arc'\}$ is an ideal
triangulation of $\surf$. We say that $\sigma$ is obtained by applying a \emph{flip} to $\tau$, or by \emph{flipping} the arc $\arc$, and write
$\sigma=f_i(\tau)$.
In order to be able to flip the folded sides of self-folded triangles, we have to enlarge the set of arcs with which
triangulations are formed. This is done by introducing the notion of \emph{tagged arc}.

\begin{defi} A \emph{tagged arc} in $\surf$ is an ordinary arc together with a tag accompanying each of its two ends,
constrained to the following four conditions:
\begin{itemize}
\item a tag can only be \emph{plain} or \emph{notched};
\item the arc does not cut out a once-punctured monogon;
\item every end corresponding to a marked point that lies on the boundary must be tagged plain;
\item both ends of a loop must be tagged in the same way.
\end{itemize}
Note that there are arcs whose ends may be tagged in different ways. Following \cite{FST}, in the figures we will omit the plain tags and represent the notched ones by the symbol $\bowtie$. There is a straightforward way to extend the notion of isotopy to tagged arcs. We denote by $\taggedinsurf$ the set of (isotopy classes of) tagged arcs in $\surf$.
\end{defi}

If no confusion is possible, we will often refer to ordinary arcs simply as arcs. However, the word ``tagged" will never be dropped from the term ``tagged arc". Notice that not every ordinary arc is a tagged arc: a loop that encloses a once-punctured monogon is not a tagged arc.

\begin{defi}\begin{enumerate}\item Two tagged arcs $\arc_1$ and $\arc_2$ are \emph{compatible} if the following 
conditions are satisfied:
\begin{itemize}
\item the untagged versions of $\arc_1$ and $\arc_2$ are compatible as ordinary arcs;
\item if the untagged versions of $\arc_1$ and $\arc_2$ are different, then they are tagged in the same way at each end they share.
\item if the untagged versions of $\arc_1$ and $\arc_2$ coincide, then there must be at least one end of the untagged version at which they are tagged in the same way.
\end{itemize}
\item A \emph{tagged triangulation} of $\surf$ is any maximal collection of pairwise compatible tagged arcs.
\end{enumerate}
\end{defi}

All tagged triangulations of $\surf$
have the same cardinality (equal to the rank $n$ of $\surf$) and every collection of $n-1$ pairwise compatible tagged arcs is contained in precisely two tagged triangulations. This means that every tagged arc in a tagged triangulation can be replaced by a uniquely defined, different tagged arc that together with the remaining $n-1$ arcs forms a tagged triangulation. By analogy with the ordinary case, this combinatorial replacement will be called \emph{flip}. Furthermore, a sequence $(\tau_0,\ldots,\tau_\ell)$ of ideal or tagged triangulations will be called a \emph{flip-sequence} if $\tau_{k-1}$ and $\tau_k$ are related by a flip for $k=1,\ldots,\ell$.

\begin{prop}
\label{prop:ideal-triangs-seqs-of-flips}\label{seqofflips} Let $\surf$ be a surface with non-empty boundary.
\begin{itemize}\item Any two ideal triangulations of $\surf$ are members of a flip-sequence that involves only ideal triangulations.
\item Any two ideal triangulations without self-folded triangles are members of a flip-sequence that involves only ideal triangulations without self-folded triangles.
\item Any two tagged triangulations are members of a flip-sequence.
\end{itemize}
\end{prop}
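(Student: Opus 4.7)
The plan is to establish the three bullets in sequence, using (1) as the foundation for (2) and (3). All three statements are connectivity results for the appropriate flip graphs, and the overall strategy is to reduce each case to the previous one via explicit local flip moves.

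For bullet (1), I would induct on the rank $n = 6g + 3b + 3p + c - 6$ of $\surf$. The base cases are the small surfaces (polygons with few marked points and the once-punctured digon), each of which admits an explicit short list of ideal triangulations that are easily seen to be flip-connected. For the inductive step, given two ideal triangulations $\tau$ and $\sigma$, I would show that $\tau$ can be flipped to a triangulation sharing an arc with $\sigma$. Pick an arc $a\in\sigma$ minimizing the number of transverse intersections with arcs of $\tau$, and argue by a standard bigon/surgery argument that some flip in $\tau$ strictly decreases this intersection number whenever it is positive. Once $a$ belongs to $\tau$, cut $\surfnoM$ along $a$ to obtain surfaces of strictly smaller rank and apply the inductive hypothesis to each piece; concatenating the resulting flip-sequences gives the desired one.

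For bullet (2), I would modify a flip-sequence from (1). The key observation is that a self-folded triangle is entirely local: it consists of a loop enclosing a puncture $p$ together with a folded side joining $p$ to itself. If the sequence $(\tau_0,\ldots,\tau_N)$ from (1) contains triangulations with self-folded triangles, I would detour around each of them: in an annular neighborhood of the puncture in question there are only finitely many configurations, and one can explicitly write down a short flip-sequence connecting the local pictures of $\tau_{k-1}$ and $\tau_{k+1}$ while staying entirely among triangulations without self-folded triangles. Iterating this surgery produces a clean flip-sequence.

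For bullet (3), I would exploit the correspondence $\tau \mapsto (\tau^\circ,\epsilon_\tau)$ between tagged triangulations and pairs (ideal triangulation, signature). Given two tagged triangulations $\tau$ and $\sigma$, first equalize their signatures: at each puncture $p$ with $\epsilon_\tau(p)\neq\epsilon_\sigma(p)$, a single flip of a tagged arc incident to $p$ toggles the tag at $p$, and hence the value of $\epsilon$ at $p$, without disturbing the signatures at other punctures. Once the signatures agree, tagged flips are in bijection with ordinary flips of $\tau^\circ$ and $\sigma^\circ$, so bullet (1) or (2), applied to $\tau^\circ$ and $\sigma^\circ$, produces an ordinary flip-sequence that lifts to a tagged flip-sequence between $\tau$ and $\sigma$.

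The main obstacle will be the intersection-reduction step in bullet (1), particularly when $a\in\sigma$ meets a self-folded triangle of $\tau$: flipping the enclosing loop is not permitted and one must argue that flipping the folded side first resolves the situation. A secondary obstacle appears in bullet (3), where one must ensure that the signature-equalization flips and the subsequent lifted flip-sequence cooperate, especially at punctures where the initial signature vanishes (so that two tagged arcs with opposite tags meet there).
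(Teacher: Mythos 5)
The paper does not actually prove this proposition; it cites three separate references for the three bullets (Mosher for the first, the second author's Part~II paper for the second, Fomin--Shapiro--Thurston for the third), so there is no internal proof to compare against. Your outline for bullet~(1) is the standard intersection-reduction plus cut-and-induct argument that these sources use, but you have the self-folded-triangle restriction backwards: it is the \emph{folded side} (the arc running from the enclosed puncture to the base vertex) that cannot be flipped, while the enclosing loop can always be flipped. Your sketch also leaves unaddressed the routine but necessary points that cutting along an arc may disconnect the surface, may produce one of the excluded small surfaces, or may be a cut along a loop.

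The more serious gaps are in bullets~(2) and~(3). In~(3), the claim that ``a single flip of a tagged arc incident to $p$ toggles the tag at $p$ without disturbing the signatures at other punctures'' is false once $p$ has three or more incident tagged arcs: compatibility forbids two tagged arcs with distinct untagged versions that share an end from being tagged differently at that end, so a flip in a triangulation that is all-plain at $p$ can produce an arc notched at $p$ only when, afterwards, just two arcs with coinciding untagged versions remain incident to $p$. Changing the signature at a single puncture in general requires an entire flip-sequence that first funnels all arcs at $p$ down to the two-arc configuration --- this is precisely what Lemma~\ref{lemma:change-of-epsilon} of this paper constructs separately, rather than deducing it from the present proposition. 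In~(2), your ``local detour'' around a triangulation with a self-folded triangle is asserted rather than established: the number of arcs incident to the relevant puncture is unbounded, so ``finitely many configurations'' is not literally true, and you would also have to check that the detour does not create a self-folded triangle at some other puncture. Both~(2) and~(3) are genuine theorems whose proofs in the cited references are not short.
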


The first assertion of Proposition \ref{prop:ideal-triangs-seqs-of-flips} is well known and has many different proofs, we refer the reader to \cite{Mosher} for an elementary one. The second assertion of the proposition is proved in \cite{Lreps}. A proof of the third assertion can be found in \cite{FST}.

Before defining signed adjacency matrices (and quivers) of tagged triangulations, let us see how to represent ideal triangulations with tagged ones and viceversa.

\begin{defi}\label{def:tagfunction(ideal-arc)} Let $\epsilon:\punct\rightarrow\{-1,1\}$ be any function. We define a function $\tagfunction_\epsilon:\arcsinsurf\rightarrow\taggedinsurf$ that represents ordinary arcs by tagged ones as follows.
\begin{enumerate}\item If $\arc$ is an ordinary arc that is not a loop enclosing a once-punctured monogon, set $\arc$ to be the underlying ordinary arc of the tagged arc $\tagfunction_\epsilon(\arc)$. An end of $\tagfunction_\epsilon(\arc)$ will be tagged notched if and only if the corresponding marked point is an element of $\punct$ where $\epsilon$ takes the value $-1$.
\item If $\arc$ is a loop, based at a marked point $m$, that encloses a once-punctured monogon, being $p$ the puncture inside this monogon, then the underlying ordinary arc of $\tagfunction(\arc)$ is the arc that connects $m$ with $p$ inside the monogon. The end at $m$ will be tagged notched if and only if $m\in\punct$ and $\epsilon(m)=-1$, and the end at $p$ will be tagged notched if and only if $\epsilon(p)=1$.
\end{enumerate}
In the case where $\epsilon$ is the function that takes the value $1$ at every puncture, we shall denote the corresponding function $\tagfunction_\epsilon$ simply by $\tagfunction$.
\end{defi}

To pass from tagged triangulations to ideal ones we need the notion of signature.

\begin{defi}\label{def:tau^circ} \begin{enumerate}\item Let $\tau$ be a tagged triangulation of $\surf$. The \emph{signature} of $\tau$ is the function
$\delta_\tau:\punct\rightarrow\{-1,0,1\}$ defined by
\begin{equation}
\delta_\tau(p)=
\begin{cases} 1\ \ \ \ \ \text{if all ends of tagged arcs in $\tau$ incident to $p$ are tagged plain;}\\ -1\ \ \ \text{if all ends of tagged arcs in $\tau$ incident to $p$ are tagged notched;}\\
0\ \ \ \ \ \text{otherwise.}
\end{cases}
\end{equation}
Note that if $\delta_\tau(p)=0$, then there are precisely two arcs in $\tau$ incident to $p$, the untagged versions of these arcs coincide and they carry the same tag at the end different from $p$.
\item We replace each tagged arc in $\tau$ with an ordinary arc by means of the following rules:
\begin{itemize}
\item delete all tags at the punctures $p$ with non-zero signature
\item for each puncture $p$ with $\delta_\tau(p)=0$, replace the tagged arc $\arc\in\tau$ which is notched at $p$ by a loop enclosing $p$ and $\arc$. 
    The resulting collection of ordinary arcs will be denoted by $\tau^\circ$.
\end{itemize}
\end{enumerate}
\end{defi}


The next proposition follows from the results in Subsection 9.1 of \cite{FST}.

\begin{prop}\label{prop:tagfunction-and-circ} Let $\surf$ be a surface and $\epsilon:\punct\rightarrow\{-1,1\}$ be a function.
\begin{itemize}
\item The function $\tagfunction_\epsilon:\arcsinsurf\to\taggedinsurf$ is injective and preserves compatibility. Thus, if $\arc_1$ and $\arc_2$ are compatible ordinary arcs, then $\tagfunction_\epsilon(\arc_1)$ and $\tagfunction_\epsilon(\arc_2)$ are compatible tagged arcs. Consequently, if $T$ is an ideal triangulation of $\surf$, then $\tagfunction_\epsilon(T)=\{\tagfunction_\epsilon(\arc)\suchthat\arc\in T\}$ is a tagged triangulation of $\surf$. Moreover, if $T_1$ and $T_2$ are ideal triangulations such that $T_2=f_\arc(T_1)$ for an arc $\arc\in T_1$, then $\tagfunction_\epsilon(T_2)=f_{\tagfunction_\epsilon(\arc)}(\tagfunction_{\epsilon}(T_1))$.
\item If $\tau$ is a tagged triangulation of $\surf$, then $\tau^\circ$ is an ideal triangulation of $\surf$.
\item For every ideal triangulation $T$, we have $\tagfunction(T)^\circ=T$.
\item For any tagged triangulation $\tau$ such that $\delta_\tau(p)\epsilon(p)\geq 0$ for every $p\in\punct$, $\tagfunction_\epsilon(\tau^\circ)=\tau$. Consequently, $\tagfunction(\tau^\circ)$ can be obtained from $\tau$ by deleting the tags at the punctures with negative signature.
\item Let $\tau$ and $\sigma$ be tagged triangulations such $\delta_\tau(p)\epsilon(p)\geq 0$ and $\delta_\sigma(p)\epsilon(p)\geq 0$ for every $p\in\punct$. Assume further that every puncture $p$ with $\delta_{\tau}(p)\delta_{\sigma}(p)=0$ satisfies $\epsilon(p)=1$. If $\sigma=f_{\arc}(\tau)$ for a tagged arc $\arc\in\tau$, then $\sigma^\circ=f_{\arc^\circ}(\tau^\circ)$, where $\arc^\circ\in\tau^\circ$ is the ordinary arc that replaces $\arc$ in Definition \ref{def:tau^circ}.
\end{itemize}
\end{prop}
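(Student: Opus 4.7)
The plan is to verify the five bullet points by unwinding Definitions \ref{def:ordinaryarcs-compatibility-idealtriangulations}, \ref{def:tagfunction(ideal-arc)} and \ref{def:tau^circ} one at a time; each assertion is essentially combinatorial bookkeeping, so I would organize the argument around case analyses depending on whether an ordinary arc is a loop enclosing a once-punctured monogon, and on the values taken by $\delta_\tau$ and $\epsilon$ at the punctures involved.

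For the first bullet, injectivity of $\tagfunction_\epsilon$ follows by exhibiting a left inverse on the image: if $\tagfunction_\epsilon(\arc)$ has both ends tagged in the $\epsilon$-way, recover $\arc$ as the underlying ordinary arc; otherwise the tagged arc has an end whose tag disagrees with $\epsilon$, and $\arc$ is the loop enclosing that end together with the underlying arc. Preservation of compatibility is then a check of Definition \ref{def:ordinaryarcs-compatibility-idealtriangulations}(4) against the tagged-compatibility clauses, where the only substantive case is when two ordinary arcs share an endpoint $p\in\punct$: the rule defining $\tagfunction_\epsilon$ tags them identically at $p$ according to $\epsilon(p)$, so the third tagged-compatibility clause is automatic. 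The cardinality $n$ of an ideal triangulation coincides with that of a tagged triangulation, so maximality of $\tagfunction_\epsilon(T)$ follows from pairwise compatibility plus a count; the flip assertion follows because the construction of $\tagfunction_\epsilon$ is local in each ideal triangle.

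For the second bullet, I would argue that the replacement procedure of Definition \ref{def:tau^circ}(2) at a signature-zero puncture $p$ turns the unique pair of tagged arcs $\tau$ has at $p$ into a self-folded triangle (a loop together with the arc it encloses); outside such punctures the procedure just strips tags. The resulting ordinary arcs are pairwise compatible and have cardinality $n$, hence form an ideal triangulation. Bullets three and four follow directly from the definitions: $\tagfunction(T)$ adds only plain tags, and $T$ has no signature-zero punctures because all its ends are plain, so stripping tags recovers $T$; similarly, if $\delta_\tau\epsilon\geq 0$ pointwise then $\tau$ has no signature-zero puncture carrying notched tags that $\epsilon$ would ask to be plain, so $\tagfunction_\epsilon(\tau^\circ)$ reproduces both the underlying ordinary arcs and the tags of $\tau$.

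The main obstacle is the fifth bullet, the flip-compatibility at the level of $(\cdot)^\circ$, because the correspondence between tagged flips and ordinary flips fails around signature-zero punctures: flipping a tagged arc $\arc$ at a puncture $p$ with $\delta_\tau(p)=0$ could in principle correspond either to flipping the folded side or to flipping the enclosing loop in $\tau^\circ$, and these yield different ideal triangulations. The additional hypothesis $\delta_\tau(p)\delta_\sigma(p)=0 \Rightarrow \epsilon(p)=1$ is precisely what rules out the ambiguous configurations: it forces the notched arc involved in each signature-zero puncture of $\tau$ or $\sigma$ to be replaced by the loop (not by a plain arc), and a case analysis on the four possible tag patterns at the two endpoints of $\arc$, combined with the three possible signature values at each endpoint in $\tau$ and in $\sigma$, shows that in every admissible case $\arc^\circ$ is a well-defined flippable arc of $\tau^\circ$ and that $f_{\arc^\circ}(\tau^\circ)=\sigma^\circ$. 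The individual case-checks are routine pictures; the heart of the proof is setting up the case distinction and verifying that the signature hypothesis really eliminates the conflicting cases.
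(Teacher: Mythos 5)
The paper does not prove this proposition directly --- it is cited to Subsection~9.1 of Fomin--Shapiro--Thurston \cite{FST}. Your sketch is therefore an independent direct argument, and its overall organization (case analysis on whether an arc encloses a once-punctured monogon, and on the values of the signature $\delta_\tau$ and of $\epsilon$ at the punctures involved) is the natural way to verify the results that FST establish.

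A few imprecisions are worth flagging, concentrated in the fourth and fifth bullets. In the fourth bullet, at a puncture $p$ with $\delta_\tau(p)=0$ and $\epsilon(p)=-1$ the roles of the two tagged arcs at $p$ are \emph{exchanged} when you pass to $\tau^\circ$ and back through $\tagfunction_\epsilon$ (the loop is sent to a tagged arc that is plain at $p$, while the folded side is sent to one that is notched at $p$); the identity $\tagfunction_\epsilon(\tau^\circ)=\tau$ still holds because triangulations are unordered sets, but your phrase ``reproduces both the underlying ordinary arcs and the tags of $\tau$'' is not literally true arc by arc, and it is worth noting this swap explicitly. In the fifth bullet the phenomenon you are guarding against is the right one, but your explanation misattributes its cause. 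Definition~\ref{def:tau^circ} never consults $\epsilon$: at a signature-zero puncture the notched arc is \emph{always} replaced by the loop, so the extra hypothesis does not ``force'' that replacement. Also, the folded side of a self-folded triangle cannot be flipped in an ideal triangulation, so the ambiguity you describe between ``flipping the folded side or flipping the enclosing loop'' in $\tau^\circ$ does not exist. The actual danger is a flip that changes the signature at some puncture between $0$ and $-1$: if $\delta_\tau(p)=0$ and $\delta_\sigma(p)=-1$ (or vice versa), then $\tau^\circ$ has a self-folded triangle at $p$ while $\sigma^\circ$ does not, and creating or destroying a self-folded triangle changes two arcs, so $\tau^\circ$ and $\sigma^\circ$ are not related by a single ordinary flip. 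What the three hypotheses accomplish, jointly, is to rule out exactly these transitions: $\delta_\tau(p)\delta_\sigma(p)=0$ forces $\epsilon(p)=1$, which together with $\delta_\tau(p)\epsilon(p)\geq 0$ and $\delta_\sigma(p)\epsilon(p)\geq 0$ forbids either signature from being $-1$ at such $p$. In the surviving cases (signature unchanged, or changing between $0$ and $1$) the tagged flip of $\arc$ descends to the ordinary flip of $\arc^\circ$, which is the content of the case analysis you correctly say one must carry out.
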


%

\begin{defi}\label{def:Omega-prime}
For each function $\epsilon:\punct\rightarrow\{-1,1\}$, let $\bar{\Omega}'_\epsilon=\{\tau\suchthat\tau$ is a tagged triangulation of $\surf$ such that for all $p\in\punct$, $\delta_{\tau}(p)=-1$ if and only if $\epsilon(p)=-1\}$.
\end{defi}

\begin{remark}\label{rem:tagfunction-and-circ} \begin{enumerate}\item
There may exist non-compatible ordinary arcs $\arc_1$ and $\arc_2$ such that $\tagfunction_\epsilon(\arc_1)$ and $\tagfunction_\epsilon(\arc_2)$ are compatible.
\item Note that every tagged triangulation belongs to exactly one set $\bar{\Omega}'_\epsilon$.
\item If $\punct\neq\varnothing$, and $\epsilon$ takes the value $-1$ on at least one puncture, then the set $\bar{\Omega}'_\epsilon$ is properly contained in the \emph{closed stratum} $\bar{\Omega}_\epsilon$ defined by Fomin-Shapiro-Thurston. Our choice of notation $\bar{\Omega}'_\epsilon$ is made with the purpose of avoiding confusion with the closed stratum.
\end{enumerate}
\end{remark}

To each ideal triangulation $T$ we associate a skew-symmetric $n\times n$ integer matrix $B(T)$ whose rows and columns correspond to the
arcs of $T$. Let $\pi_T:T\rightarrow T$ be the function that is the identity on the set of arcs
that are not folded sides of self-folded triangles of $T$, and sends the folded side of a self-folded triangle to the unique loop of $T$
enclosing it. For each non-self-folded ideal triangle $\triangle$ of $T$, let $B^\triangle=b^\triangle_{ij}$ be the $n\times n$ integer
matrix defined by
\begin{equation}
b^\triangle_{ij}=
\begin{cases} 1\ \ \ \ \ \text{if $\triangle$ has sides $\pi_T(i)$ and $\pi_T(j)$, with $\pi_T(j)$ preceding
$\pi_T(i)$}\\ \ \ \ \ \ \ \ \ \ \text{in the clockwise order defined by the orientation of $\surfnoM$;}\\ -1\ \ \ \text{if the same holds,
but in the counter-clockwise
order;}\\
0\ \ \ \ \ \text{otherwise.}
\end{cases}
\end{equation}
The \emph{signed adjacency matrix} $B(T)$ is then defined as
\begin{equation}
B(T)=\underset{\triangle}{\sum}B^\triangle,
\end{equation}
where the sum runs over all non-self-folded triangles of $T$. For a tagged triangulation $\tau$, the signed adjacency matrix is defined as $B(\tau)=B(\tau^\circ)$, with its rows and columns labeled by the tagged arcs in $\tau$, rather than the arcs in $\tau^\circ$.

Note that all entries of $B(\tau)$ have absolute value less than 3. Moreover, $B(\tau)$ is skew-symmetric, hence gives rise to the \emph{signed adjacency quiver} $\qtau$, whose vertices are the tagged arcs in $\tau$, with $b_{ij}$ arrows from $j$ to $i$ whenever $b_{ij}>0$. Since $B(\tau)$ is skew-symmetric, $\qtau$ is a 2-acyclic quiver.

\begin{thm}\cite[Proposition 4.8 and Lemma 9.7]{FST}
Let $\tau$ and $\sigma$ be tagged triangulations. If $\sigma$ is obtained from $\tau$ by flipping the tagged arc $\arc$ of $\tau$, then $Q(\sigma)=\mu_\arc(\qtau)$.
\end{thm}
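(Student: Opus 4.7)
My plan is to establish the theorem in three stages: first for ideal triangulations without self-folded triangles, then for general ideal triangulations, and finally for tagged triangulations by reduction to the ideal case. The entire argument is local: $B(\tau)$ is a sum of contributions from individual triangles, and $\mu_\arc$ modifies only arrows incident to $\arc$ or between vertices adjacent to $\arc$, so only the configuration of triangles in a neighborhood of the flipped arc matters.

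First I would treat an ideal triangulation $T$ with no self-folded triangles. A flippable arc $\arc\in T$ is a common side of exactly two ideal triangles $\triangle_1,\triangle_2$, which together form a quadrilateral; the flip replaces $\arc$ by the other diagonal $\arc'$, yielding triangles $\triangle'_1,\triangle'_2$. Only $B^{\triangle_1}+B^{\triangle_2}$ and $B^{\triangle'_1}+B^{\triangle'_2}$ differ between $B(T)$ and $B(f_\arc(T))$. A case-by-case inspection of how an arbitrary pair of arcs $k,j$ sits relative to the quadrilateral (which sides of $\triangle_1\cup\triangle_2$ they occupy, and with what clockwise/counter-clockwise orientation relative to $\arc$) shows that
\[
B^{\triangle'_1}+B^{\triangle'_2} \;=\; -B^{\triangle_1}-B^{\triangle_2}+\Delta,
\]
where the entries of $\Delta$ at indices $(k,j)$ with $k,j\neq\arc$ are exactly $(b_{k\arc}|b_{\arc j}|+|b_{k\arc}|b_{\arc j})/2$, matching Fomin--Zelevinsky's matrix mutation rule.

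Next I would extend to arbitrary ideal triangulations. The function $\pi_T$ appearing in the definition of $B^\triangle$ systematically replaces the folded side of a self-folded triangle by its enclosing loop; since a flippable arc is never the folded side of a self-folded triangle, the local quadrilateral picture remains valid, with a small number of degenerate configurations (where a side of $\triangle_1$ or $\triangle_2$ happens to be the enclosing loop of a self-folded triangle) that must be checked individually.

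Finally I would pass to tagged triangulations via $B(\tau)=B(\tau^\circ)$. When $\tau$ and $\sigma$ admit a common $\epsilon:\punct\to\{-1,1\}$ compatible with their signatures, as in the fifth bullet of Proposition \ref{prop:tagfunction-and-circ}, then $\sigma^\circ=f_{\arc^\circ}(\tau^\circ)$ and the ideal case yields the result. When no such $\epsilon$ exists, i.e., when the flip of $\arc$ strictly changes the signature at some puncture, I would apply a simultaneous tag-swap at the offending puncture in both $\tau$ and $\sigma$; such a swap leaves both underlying ordinary triangulations $\tau^\circ$ and $\sigma^\circ$ unchanged, bringing the signatures into a regime where Proposition \ref{prop:tagfunction-and-circ} applies. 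The main obstacle I expect is the case analysis in this final step: one must handle configurations in which $\arc$ is a loop based at a puncture or has an endpoint at a puncture with zero signature, and verify that the flip and tag-swap operations interact correctly. The combinatorics is elementary but demands patient bookkeeping to ensure every configuration is covered.
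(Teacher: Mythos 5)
The paper does not prove this statement; it is cited directly from Fomin--Shapiro--Thurston \cite{FST} (Proposition 4.8 and Lemma 9.7), so there is no in-paper argument to compare your sketch against. That said, your three-stage plan is a natural reconstruction of the argument one would give, and its ingredients are correct: the quadrilateral computation in stage one is the standard one, and the degenerate configurations you flag in stage two (a side of $\triangle_1$ or $\triangle_2$ being the enclosing loop of a self-folded triangle, so that $\pi_T$ identifies two sides) are exactly the cases that need separate verification.

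In the final stage, three lemmas should be made explicit rather than implicit. First, the tag-swap $s_p$ at a puncture $p$ satisfies $s_p(\tau)^\circ=\tau^\circ$, and moreover preserves the labeling bijection between tagged arcs and ordinary arcs that underlies the definition $B(\tau)=B(\tau^\circ)$, so that $B(s_p(\tau))=B(\tau)$ entry-by-entry; you assert the first part, but the second, though true, deserves a sentence (the only subtle case is $\delta_\tau(p)=0$, where $s_p$ is in fact the identity on $\tau$ as a set). Second, tag-swap commutes with flips, $s_p(f_{\arc}(\tau))=f_{s_p(\arc)}(s_p(\tau))$; this follows from the facts that $s_p$ preserves pairwise compatibility of tagged arcs and that the flipped arc is characterized by a uniqueness property, but it is a step you invoke silently. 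Third, a single flip cannot change $\delta(p)$ from $+1$ to $-1$ (removing one tagged arc from $\tau$ leaves at least one end of the original tag at $p$), so the only signature change that actually violates the hypotheses of the last statement of Proposition~\ref{prop:tagfunction-and-circ} is $\{\delta_\tau(p),\delta_\sigma(p)\}=\{-1,0\}$, and a single swap at $p$ repairs it. With these three points pinned down, your reduction to Proposition~\ref{prop:tagfunction-and-circ} goes through; as you anticipate, the remaining work is the patient enumeration of local configurations in stage two and in the verification of commutativity of swap and flip.
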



For technical reasons, we introduce some quivers that are obtained from signed adjacency quivers by adding some 2-cycles in specific situations.

\begin{defi} Let $\tau$ be a tagged triangulation of $\surf$. For each puncture $p$ incident to exactly two tagged arcs of $\tau$ that have the same tag at $p$, we add to $\qtau$ a 2-cycle that connects those tagged arcs and call the resulting quiver the \emph{unreduced signed adjacency quiver} $\unredqtau$. 
\end{defi}

It is clear that $\qtau$ can always be obtained from $\unredqtau$ by deleting all 2-cycles.


\section{The QP of a tagged triangulation}\label{sec:QPs-of-tagged-triangulations}

Let $\tau$ be a tagged triangulation of $\surf$ and $\epsilon:\punct\rightarrow\{-1,1\}$ be the unique function that takes the value $-1$ precisely at the punctures where the signature of $\tau$ is negative. A quick look at Definitions \ref{def:tagfunction(ideal-arc)} and \ref{def:tau^circ} tells us that $\tagfunction_\epsilon$ restricts to a bijection $\tau^\circ\rightarrow\tau$. This bijection is actually a quiver isomorphism between $Q(\tau^\circ)$ and $\qtau$. It therefore induces an $R$-algebra isomorphism $R\langle\langle Q(\tau^\circ)\rangle\rangle\rightarrow R\langle\langle\qtau\rangle\rangle$, which we shall denote by $\tagfunction_\epsilon$ as well.

The following definition generalizes Definition 23 of \cite{Lqps} to arbitrary tagged triangulations (of surfaces with non-empty boundary). We start by fixing a choice $(x_p)_{p\in\punct}$ of non-zero elements of the ground field $K$.

\begin{defi}\label{def:QP-of-tagged-triangulation} Let $\surf$ be a surface with non-empty boundary.
\begin{enumerate}\item If $\tau$ is an ideal triangulation of $\surf$, we define the potentials $\unredstau$ and $\stau$ according to the following rules:
\begin{itemize} \item Each interior non-self-folded ideal triangle $\triangle$ of $\tau$ gives rise to an oriented triangle of $\unredqtau$, let $\widehat{S}^\triangle$ be such oriented triangle up to cyclical equivalence.
\item If the interior non-self-folded ideal triangle $\triangle$ with sides $j$, $k$ and $l$, is adjacent to two self-folded triangles like in the configuration of Figure \ref{adsftriangs},
        \begin{figure}[!h]
                \caption{}\label{adsftriangs}
                \centering
                \includegraphics[scale=.5]{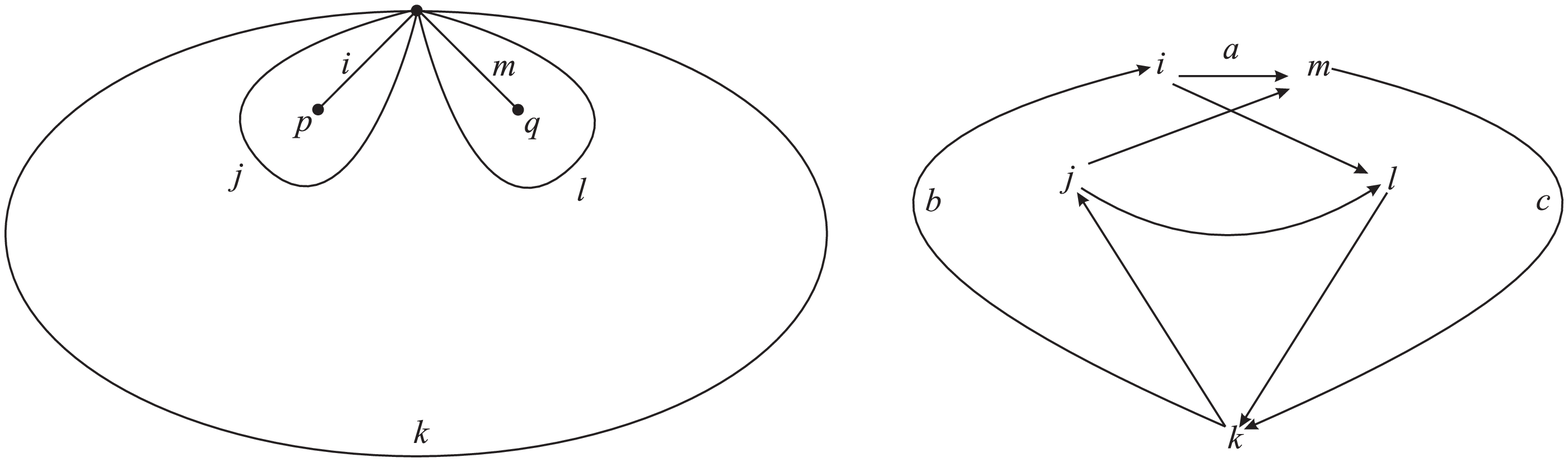}
        \end{figure}\\
define $\widehat{U}^\triangle=\frac{abc}{x_px_q}$ (up to cyclical equivalence), where $p$ and $q$ are the punctures enclosed in the self-folded triangles adjacent to $\triangle$. Otherwise, if it is adjacent to less than two self-folded triangles, define $\widehat{U}^\triangle=0$.
\item If a puncture $p$ is adjacent to exactly one arc $\arc$ of $\tau$, then $\arc$ is the folded side of a self-folded triangle of $\tau$ and around $\arc$ we have the configuration shown in Figure \ref{sftriangle}.
        \begin{figure}[!h]
                \caption{}\label{sftriangle}
                \centering
                \includegraphics[scale=.5]{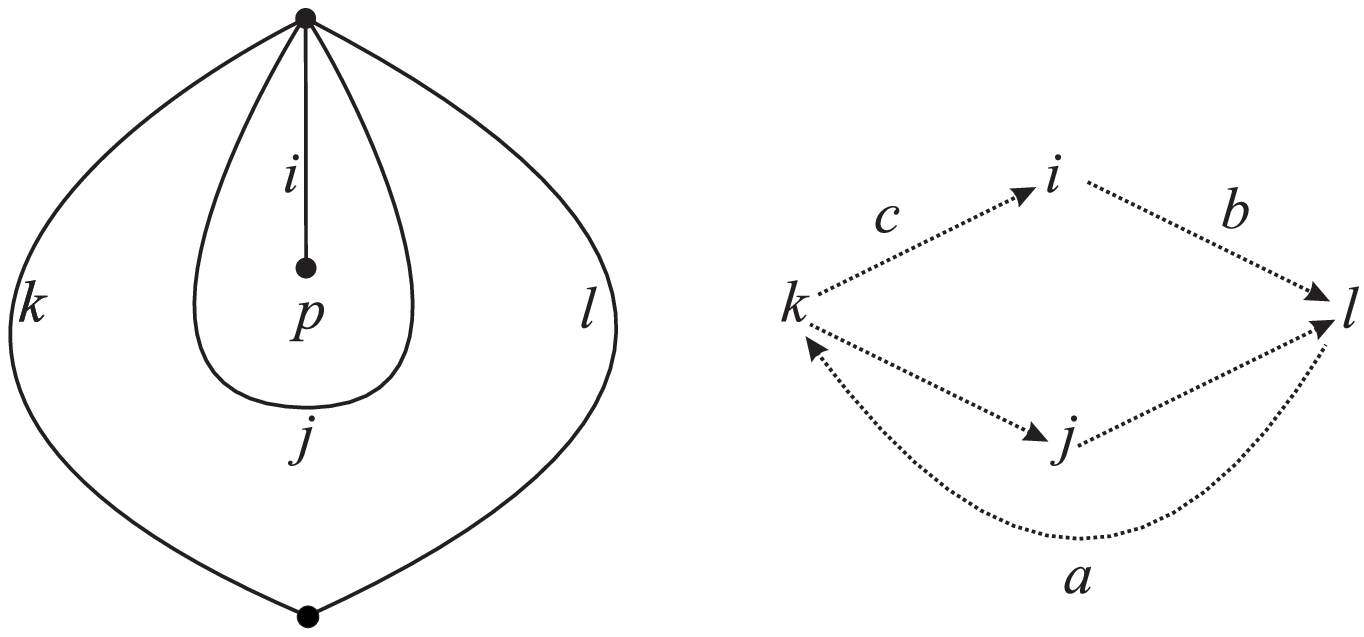}
        \end{figure}\\
In case both $k$ and $l$ are indeed arcs of $\tau$ (and not part of the boundary of $\surfnoM$), then we define $\widehat{S}^p=-\frac{abc}{x_p}$ (up to cyclical equivalence).
\item If a puncture $p$ is adjacent to more than one arc, delete all the loops incident to $p$ that enclose self-folded triangles. The arrows between the remaining arcs adjacent to $p$ form a unique cycle $a^p_1\ldots a^p_{d_p}$, without repeated arrows, that exhausts all such remaining arcs and gives a complete round around $p$ in the counter-clockwise orientation defined by the orientation of $\surfnoM$. We define $\widehat{S}^p=x_pa^p_1\ldots a^p_{d_p}$ (up to cyclical equivalence).
\end{itemize}
The \emph{unreduced potential} $\unredstau\in R\langle\langle\unredqtau\rangle\rangle$ of $\tau$ is then defined by
\begin{equation}
\unredstau=\underset{\triangle}{\sum}(\widehat{S}^\triangle+\widehat{U}^\triangle)+\underset{p\in\punct}{\sum}\widehat{S}^p,
\end{equation}
where the first sum runs over all interior non-self-folded triangles. We define $\qstau$ to be the (right-equivalence class of the) reduced part of $\unredqstau$.
\item If $\tau$ is a tagged triangulation of $\surf$, we define $\unredstau=\tagfunction_\epsilon(\widehat{S}(\tau^\circ))$ and $\stau=\tagfunction_\epsilon(S(\tau^\circ))$, where $\epsilon:\punct\rightarrow\{-1,1\}$ is the unique function that takes the value $1$ at all the punctures where the signature of $\tau$ is non-negative and the value $-1$ at all the punctures where the signature of $\tau$ is negative.
\end{enumerate}
\end{defi}

\begin{remark}\label{rem:hooks-only-once}\begin{enumerate}\item Let $\tau$ be an ideal triangulation. The only situation where one needs to apply reduction to $(\unredqtau,\unredstau)$ in order to obtain $\stau$ is when there is some puncture incident to exactly two arcs of $\tau$. The reduction is done explicitly in \cite[Section 3]{Lqps}.
 \item Let $\tau$ be an ideal triangulation and $\arc\in\tau$ an arc that is not the folded side of any self-folded triangle of $\tau$. If $\beta\gamma$ is an $\arc$-hook of $\qtau$ and there is some oriented 3-cycle $\alpha\beta\gamma$ of $\qtau$ that appears in $\stau$ with non-zero coefficient, then $\beta\gamma$ is not factor of any other term of $\stau$.
 \end{enumerate}
\end{remark}

Since $\tagfunction_\epsilon$ is a `relabeling' of vertices (it `relabels' the elements of $\tau^\circ$ with the corresponding tagged arcs in $\tau=\tagfunction_\epsilon(\tau^\circ)$), we have the following.

\begin{lemma} For every tagged triangulation $\tau$, the QP $\qstau$ is the reduced part of $\unredqstau$.
\end{lemma}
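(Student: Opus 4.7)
The plan is to leverage the fact, already emphasized in the paragraph preceding the lemma, that $\tagfunction_\epsilon$ is nothing but a relabeling of vertices, where $\epsilon:\punct\to\{-1,1\}$ takes the value $-1$ precisely at the punctures where $\delta_\tau$ is negative. The strategy has two halves: first promote $\tagfunction_\epsilon$ to a genuine $R$-algebra isomorphism between the two unreduced complete path algebras, and then invoke the uniqueness part of the Splitting Theorem (Theorem \ref{thm:splittingthm}) to move the reduction operation through this isomorphism.

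First I would verify that $\tagfunction_\epsilon$ induces an isomorphism $\unredqtau[\tau^\circ] := \widehat{Q}(\tau^\circ) \to \unredqtau$ of \emph{unreduced} signed adjacency quivers. The bijection $\tagfunction_\epsilon:\tau^\circ\to\tau$ is a quiver isomorphism $Q(\tau^\circ)\to\qtau$ because $B(\tau)=B(\tau^\circ)$ by definition. For the unreduced versions one must match the added 2-cycles puncture by puncture: for $p\in\punct$, compare the conditions ``$p$ is incident to exactly two arcs of $\tau^\circ$'' and ``$p$ is incident to exactly two tagged arcs of $\tau$ carrying the same tag at $p$''. If $\delta_\tau(p)\neq 0$, all tagged ends at $p$ have the same tag, and the passage $\tau\leadsto\tau^\circ$ does not change the set of arcs incident to $p$, so the two conditions are equivalent at $p$. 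If $\delta_\tau(p)=0$, there are exactly two tagged arcs at $p$ with \emph{different} tags there, so no 2-cycle is attached in $\unredqtau$ at $p$; meanwhile, the construction of $\tau^\circ$ (Definition \ref{def:tau^circ}) turns one of these into a loop enclosing a once-punctured monogon, so $p$ becomes incident to exactly one arc (the folded side of a self-folded triangle) in $\tau^\circ$, and no 2-cycle is added in $\unredqtau[\tau^\circ]$ at $p$ either. The remaining cases (one arc, or three or more arcs at $p$) are handled identically. Thus the vertex relabeling extends to a quiver isomorphism of the unreduced quivers, and hence to a continuous $R$-algebra isomorphism $\tagfunction_\epsilon:R\langle\langle\unredqtau[\tau^\circ]\rangle\rangle\to R\langle\langle\unredqtau\rangle\rangle$.

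Now comes the step that does the real work. By part (1) of Definition \ref{def:QP-of-tagged-triangulation}, the reduced part of $\unredqstau[\tau^\circ]$ is (the right-equivalence class of) $Q(\tau^\circ,S(\tau^\circ))$. Applying the isomorphism $\tagfunction_\epsilon$ and using that $R$-algebra isomorphisms send trivial QPs to trivial QPs and reduced QPs to reduced QPs, the image under $\tagfunction_\epsilon$ of a splitting $\unredqstau[\tau^\circ]\cong(Q(\tau^\circ),S(\tau^\circ))\oplus(\text{trivial})$ is a splitting of $\tagfunction_\epsilon(\unredqstau[\tau^\circ])$ as the direct sum of $\tagfunction_\epsilon(Q(\tau^\circ),S(\tau^\circ))$ and a trivial QP. By the uniqueness clause of Theorem \ref{thm:splittingthm}, the reduced part is determined up to right-equivalence, so
\[
\text{reduced part of } \tagfunction_\epsilon(\unredqstau[\tau^\circ]) \ = \ \tagfunction_\epsilon(Q(\tau^\circ),S(\tau^\circ)).
\]
By part (2) of Definition \ref{def:QP-of-tagged-triangulation}, the right-hand side is exactly $\qstau$. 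But $\tagfunction_\epsilon(\unredqstau[\tau^\circ])=(\unredqtau,\tagfunction_\epsilon(\widehat{S}(\tau^\circ)))=(\unredqtau,\unredstau)=\unredqstau$, again by part (2) of Definition \ref{def:QP-of-tagged-triangulation}. Combining, $\qstau$ is the reduced part of $\unredqstau$.

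The only genuinely delicate step is the verification in the first paragraph that the $2$-cycles added in $\unredqtau$ correspond exactly to those added in $\unredqtau[\tau^\circ]$; the rest is a formal consequence of the splitting theorem together with the definitional transport of potentials via $\tagfunction_\epsilon$.
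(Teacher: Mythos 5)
Your proof is correct and follows the same route the paper intends: the paper states the lemma as an immediate consequence of $\tagfunction_\epsilon$ being a relabeling of vertices, and your argument is precisely a careful unpacking of that remark, with the useful addition of an explicit case-check (according to the sign of $\delta_\tau(p)$) that the added $2$-cycles in $\widehat{Q}(\tau^\circ)$ and in $\widehat{Q}(\tau)$ match up under $\tagfunction_\epsilon$, followed by transport of the splitting through the induced $R$-algebra isomorphism via the uniqueness clause of the Splitting Theorem.
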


We arrive at our first main result, which says that any two tagged triangulations are connected by a flip sequence that is compatible with QP-mutation.

\begin{thm}\label{thm:flip<->mutation-tagged-triangs} Let $\surf$ be a surface with non-empty boundary and $\tau$ and $\sigma$ be tagged triangulations of $\surf$. Then there exists a flip sequence $(\tau_0,\tau_1,\ldots,\tau_\ell)$ such that:
\begin{itemize}\item $\tau_0=\tau$ and $\tau_\ell=\sigma$;
\item $\mu_{\arc_k}(Q(\tau_{k-1}),S(\tau_{k-1}))$ is right-equivalent to $(Q(\tau_k),S(\tau_k))$ for $k=1,\ldots,\ell$, where
$\arc_{k}$ denotes the tagged arc such that $\tau_k=f_{\arc_k}(\tau_{k-1})$.
\end{itemize}
\end{thm}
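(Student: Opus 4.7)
The overall plan is to reduce the tagged case to Theorem \ref{thm:flip<->mutation-ideal-triangs} by means of the stratification $\{\bar{\Omega}'_\epsilon\}_\epsilon$ of tagged triangulations (Definition \ref{def:Omega-prime}) and the $R$-algebra isomorphisms $\tagfunction_\epsilon$. By Proposition \ref{prop:ideal-triangs-seqs-of-flips}, any two tagged triangulations are combinatorially joined by some flip sequence, so it suffices to show that for each single flip $\sigma = f_\arc(\tau)$ one can find a (possibly longer) flip sequence from $\tau$ to $\sigma$ whose every step is compatible with the corresponding QP-mutation; allowing this detour formulation is crucial, because we will sometimes want to replace a single flip with a short chain that is easier to analyze locally.

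If $\tau$ and $\sigma$ lie in a common stratum $\bar{\Omega}'_\epsilon$, then the last bullet of Proposition \ref{prop:tagfunction-and-circ} gives $\sigma^\circ = f_{\arc^\circ}(\tau^\circ)$. The quiver isomorphism $\tagfunction_\epsilon : Q(\tau^\circ) \to \qtau$ extends to an $R$-algebra isomorphism of complete path algebras which, by Definition \ref{def:QP-of-tagged-triangulation}(2), identifies $S(\tau^\circ)$ with $\stau$ and $S(\sigma^\circ)$ with $S(\sigma)$. Since a quiver isomorphism commutes with QP-mutation and preserves right-equivalence, applying Theorem \ref{thm:flip<->mutation-ideal-triangs} to the ideal flip $\tau^\circ \to \sigma^\circ$ yields precisely the desired right-equivalence $\mu_\arc\qstau \sim (Q(\sigma),S(\sigma))$.

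The main obstacle is the cross-stratum case, which occurs precisely when the flip changes $\delta$ from (or to) the value $-1$ at some puncture $p$. My plan is to detour: first move within $\bar{\Omega}'_{\epsilon_\tau}$ from $\tau$ to a standard triangulation $\tau'$ with a simple local configuration at $p$ (using the within-stratum argument just established), then perform a single stratum-crossing flip $\tau' \to \sigma'$ that we analyze directly, and finally move within $\bar{\Omega}'_{\epsilon_\sigma}$ from $\sigma'$ to $\sigma$. The middle step is a local calculation at $p$, carried out using the explicit potentials $\widehat{S}^p = x_p a^p_1 \ldots a^p_{d_p}$ together with the $\widehat{U}^\triangle$ terms arising from adjacent self-folded triangles (Definition \ref{def:QP-of-tagged-triangulation}): one must check that QP-mutation at the flipped arc, followed by the reduction defining $(Q(\tau'),S(\tau'))$ from $(\widehat{Q}(\tau'),\widehat{S}(\tau'))$, produces a QP right-equivalent to $(Q(\sigma'),S(\sigma'))$. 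I expect the argument to close by a finite case analysis based on the number of tagged arcs incident to $p$ in $\tau'$ and the surrounding triangle configurations, each case being settled by the same bookkeeping strategy used in \cite{Lqps} to establish Theorem \ref{thm:flip<->mutation-ideal-triangs}.
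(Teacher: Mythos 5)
Your overall decomposition mirrors the paper's: reduce within a stratum $\bar{\Omega}'_\epsilon$ to the ideal case via $\tagfunction_\epsilon$ and Theorem~\ref{thm:flip<->mutation-ideal-triangs}, and handle stratum changes by detouring through a conveniently placed triangulation. The within-stratum transfer you sketch is exactly Lemma~\ref{lemma:flip<->mutation-for-same-epsilon}, and your use of the last bullet of Proposition~\ref{prop:tagfunction-and-circ} is correct (the hypothesis on punctures with $\delta_\tau(p)\delta_\sigma(p)=0$ is automatic when $\tau,\sigma\in\bar{\Omega}'_\epsilon$).

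However, two things are underspecified and the second is a real gap. First, when you say ``move within $\bar{\Omega}'_{\epsilon_\tau}$ from $\tau$ to a standard $\tau'$'', you are implicitly assuming that $\bar{\Omega}'_\epsilon$ is connected under flips \emph{that stay inside} $\bar{\Omega}'_\epsilon$. This is nontrivial and needs a proof (it is Lemma~\ref{lemma:Omega'epsilon-is-connected} in the paper, proved via the second assertion of Proposition~\ref{prop:ideal-triangs-seqs-of-flips} after first eliminating zero-signature punctures). Second, and more seriously, your treatment of the single cross-stratum flip $\tau'\to\sigma'$ is a sketch that would be hard to carry out as described: a ``finite case analysis based on the number of tagged arcs incident to $p$'' does not terminate, since that number is unbounded, and reprising the bookkeeping of \cite{Lqps} directly for a cross-stratum flip is exactly the kind of reduction computation one wants to avoid. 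The paper sidesteps all of this by a specific choice: it builds $\tau'$ (by adding punctures one at a time into non-interior triangles and then performing one flip) so that the flipped tagged arc $\arc'$ is a \emph{sink} of $Q(\tau')$. Mutation at a sink involves no new $2$-cycles, no composite arrows surviving reduction, and no interaction with the potential beyond arrow reversal, so the right-equivalence $\mu_{\arc'}(Q(\tau'),S(\tau'))\cong(Q(\sigma'),S(\sigma'))$ is immediate and requires no case analysis at all. Without that observation (or an equally explicit choice of $\tau'$ that trivializes the mutation), the cross-stratum step remains the bulk of the work and your proposal does not close it.
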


The proof of Theorem \ref{thm:flip<->mutation-tagged-triangs} will make use of the following.

\begin{thm}\cite[Theorems 30, 31 and 36]{Lqps}\label{thm:flip<->mutation-ideal-triangs} Let $\surf$ be a surface with non-empty boundary. If $\tau$ and $\sigma$ are ideal triangulations of $\surf$ with $\sigma=f_\arc(\tau)$, then $\mu_\arc(\qstau)$ and $\qssigma$ are right-equivalent QPs. Furthermore, all QPs of the form $\qtau$, for $\tau$ an ideal triangulation of $\surf$, are Jacobi-finite and non-degenerate.
\end{thm}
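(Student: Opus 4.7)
The plan is to prove the two assertions in turn: the flip--mutation compatibility by a local case analysis, and then Jacobi-finiteness and non-degeneracy as consequences.

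For the first assertion, fix an ideal triangulation $\tau$ of $\surf$ and an arc $\arc\in\tau$ that is not the folded side of a self-folded triangle, and consider the two ideal triangles $\triangle_1,\triangle_2$ of $\tau$ sharing $\arc$. A finite case analysis is dictated by the local geometry of $\triangle_1\cup\triangle_2$: whether each $\triangle_j$ is interior, whether either is adjacent to a self-folded triangle (so that the correction potential $\widehat{U}^{\triangle_j}$ appears), whether a puncture is incident to $\arc$ (so that a puncture cycle $\widehat{S}^p$ passes through the arrows incident to $\arc$), and how the surrounding arcs may be identified. In each case I would write down the piece of $\stau$ involving arrows incident to $\arc$, carry out the premutation $\premuti(\qtau,\stau)$ following Definition \ref{def:QP-mutation}, and then apply the Splitting Theorem \ref{thm:splittingthm} to identify the trivial part and pass to the reduced QP. The quiver identity $\mu_\arc\qtau=Q(f_\arc(\tau))$ is already supplied by \cite{FST} and recalled above, so the remaining task is to construct an explicit $R$-algebra isomorphism between the reduced premutation and $(Q(f_\arc(\tau)),S(f_\arc(\tau)))$ sending one potential to a cyclic equivalent of the other.

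The main obstacle is matching the potentials. When $\arc$ is incident to a puncture $p$ with many adjacent arcs, the puncture cycle $\widehat{S}^p=x_p a_1^p\cdots a_{d_p}^p$ shares consecutive arrows with the triangle $3$-cycles $\widehat{S}^{\triangle_j}$, so killing the $2$-cycles introduced in Step~1 of Definition \ref{def:threesteps} via the Splitting Theorem produces residual terms that must recombine into the new puncture cycle after the flip, with exactly the same scalar $x_p$. This is precisely what forces the definition of $\stau$ to use the scalars $x_p$ and the auxiliary potential $\widehat{U}^\triangle$ (which appears only for a triangle sandwiched between two self-folded triangles): without them the reductions would fail to match. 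The right-equivalence is built as the identity on arrows disjoint from $\arc$ combined with carefully chosen twists on the new arrows $a^*,b^*,[ab]$ whose correction terms cancel the leftovers; Remark \ref{rem:hooks-only-once}(2) guarantees that these twists are well defined because each $\arc$-hook is a factor of at most one term of $\stau$, so the twists do not interfere with one another. The verification then boils down to a finite number of cyclic-equivalence checks modulo the Jacobian ideal.

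The second assertion is deduced from the first. For non-degeneracy, the flip--mutation compatibility combined with Proposition \ref{prop:ideal-triangs-seqs-of-flips} shows that mutating $\qstau$ at any vertex corresponding to a non-folded-side arc yields a QP of the form $(Q(\tau'),S(\tau'))$ for an ideal triangulation $\tau'$, whose quiver is automatically $2$-acyclic. It remains to verify that mutation at a vertex corresponding to the folded side of a self-folded triangle also gives a $2$-acyclic quiver; this is a direct local computation in the configuration of Figure \ref{sftriangle}, where one checks that every $2$-cycle created in Step~1 is cancelled in Step~3. Iterating both cases gives non-degeneracy. For Jacobi-finiteness, the plan is to use the cyclic derivatives of $\stau$ to bound the length of paths representing nonzero classes in $\jacobqstau$: derivatives of the triangle $3$-cycles identify, up to scalar, paths traversing opposite sides of any interior non-self-folded triangle, while derivatives of the puncture cycles $\widehat{S}^p$ express full counter-clockwise rotations around a puncture in terms of shorter puncture-free paths. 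Because the boundary of $\surfnoM$ is non-empty, any sufficiently long path in $\qtau$ must either wind around a puncture or run through a triangle adjacent to the boundary and is therefore reducible, so $\jacobqstau$ is spanned by paths of length bounded by a combinatorial invariant of $\surf$, giving finite-dimensionality.
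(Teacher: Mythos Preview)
This theorem is quoted from \cite{Lqps} and is not proved in the present paper; it serves as input to the proof of Theorem \ref{thm:flip<->mutation-tagged-triangs}. Your sketches of the flip--mutation compatibility (local case analysis on the two triangles sharing $\arc$, explicit reduction via the Splitting Theorem, matching of potentials using Remark \ref{rem:hooks-only-once}(2)) and of Jacobi-finiteness (bounding path lengths through the cyclic derivatives, exploiting the non-empty boundary) both follow the strategy of \cite{Lqps} and are correct in outline.

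Your argument for non-degeneracy, however, has a genuine gap. You verify that a \emph{single} mutation of $\qstau$ at any vertex produces a $2$-acyclic QP, and then propose to ``iterate both cases''. The problem is that after mutating at the vertex corresponding to the folded side of a self-folded triangle, the resulting QP is \emph{not} right-equivalent to $(Q(\tau'),S(\tau'))$ for any ideal triangulation $\tau'$: the corresponding flip produces a genuinely tagged triangulation with a notched arc, which lies outside the scope of \cite{Lqps} (and whose QP is only defined in the present paper). Once such a mutation has been performed, neither of your two cases applies to the next mutation step, and the iteration collapses. The approach actually taken in \cite{Lqps} avoids this entirely: one proves that $\qstau$ is \emph{rigid} (for one ideal triangulation, hence for all, by flip--mutation compatibility and the mutation-invariance of rigidity), and then invokes part (4) of Theorem \ref{thm:propertiesofQP-mutations}, which says that every reduced rigid QP is non-degenerate. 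Since rigidity is preserved under arbitrary QP-mutation sequences, no case distinction on the combinatorial type of the mutated vertex is required.
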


\begin{proof}[Proof of Theorem \ref{thm:flip<->mutation-tagged-triangs}] Our strategy is the following:
\begin{enumerate}\item\label{proof:item-1} First we will show that for each pair of functions $\epsilon_1,\epsilon_2:\punct\rightarrow\{-1,1\}$ such that $\sum_{p\in\punct}|\epsilon_1(p)-\epsilon_2(p)|=2$, there exist tagged triangulations $\tau'\in\bar{\Omega}'_{\epsilon_1}$, $\sigma'\in\bar{\Omega}'_{\epsilon_2}$ (see Definition \ref{def:Omega-prime}), with $\sigma'=f_{\arc'}(\tau')$ for some tagged arc $\arc'$, such that $\mu_{\arc'}(Q(\tau'),S(\tau'))$ and $(Q(\sigma'),S(\sigma'))$ are right-equivalent.
\item\label{proof:item-2} Then we will prove that for fixed $\epsilon:\punct\rightarrow\{-1,1\}$, any two elements of $\bar{\Omega}'_\epsilon$ are related by a sequence of flips of tagged triangulations belonging to $\bar{\Omega}'_\epsilon$.
\item\label{proof:item-3} Based on Theorem \ref{thm:flip<->mutation-ideal-triangs}, we will then show that if $\tau$ and $\sigma$ belong to the same set $\bar{\Omega}'_\epsilon$ and are related by a single flip, then their QPs $\qstau$ and $\qssigma$ are related by the corresponding QP-mutation.
\end{enumerate}

\begin{lemma}\label{lemma:change-of-epsilon} If $\epsilon_1,\epsilon_2:\punct\rightarrow\{-1,1\}$ are functions satisfying $\sum_{p\in\punct}|\epsilon_1(p)-\epsilon_2(p)|=2$, then there exist tagged triangulations $\tau'\in\bar{\Omega}'_{\epsilon_1}$, $\sigma'\in\bar{\Omega}'_{\epsilon_2}$, with $\sigma'=f_{\arc'}(\tau')$ for some tagged arc $\arc'$, such that $\mu_{\arc'}(Q(\tau'),S(\tau'))$ and $(Q(\sigma'),S(\sigma'))$ are right-equivalent.
\end{lemma}

\begin{proof} Throughout the proof of this lemma we will make a slight change to our notation. Specifically, we will assume that the set $\marked$ is contained in the (non-empty) boundary of $\surfnoM$, so that $\surf$ is an unpunctured surface. Then we are going to add punctures to $\surf$ one by one, and denote the set of marked points of the resulting $n$-punctured surface by $\marked\cup\punct_n=\marked\cup\{p_1,\ldots,p_n\}$ (for $n\geq 0$, where $\punct_0=\varnothing$). The alluded punctures will be added at the same time that we recursively construct some specific ideal triangulations of the corresponding punctured surfaces.

Let $\tau_0=\sigma_0$ be any ideal triangulation of the unpunctured surface $\surf$. Since the boundary of $\surfnoM$ is not empty, this ideal triangulation must have a non-interior triangle. Put a puncture $p_1$ inside any such triangle $\triangle_0$. Then draw the three arcs emanating from $p_1$ and going to the three
vertices of $\triangle_0$. The result is an ideal triangulation $\sigma_1$ of $\surfpone$.

For $n>1$, once $\sigma_{n-1}$ has been constructed, we put a puncture $p_n$ inside a non-interior
triangle $\triangle_{n-1}$ of $\sigma_{n-1}$ having $p_{n-1}$ as a vertex. Then we draw the three arcs emanating from $p_n$ and going to the
three vertices of $\triangle_{n-1}$. The result is an ideal triangulation $\sigma_n$ of $\surfpn$.

We have thus recursively constructed a sequence $\sigma_0,\sigma_1,\ldots$ of ideal triangulations with the property that $\sigma_n$ is a triangulation of $(\surfnoM,\marked\cup\punct_n)$ for each $n\geq 0$. Fix $n>0$ and a non-interior ideal triangle $\triangle_n$ of $\sigma_n$ having $p_n$ as a vertex. Then $\triangle_n$ has exactly one side which is a boundary segment of $\surfnoM$, and its two remaining sides are arcs in $\sigma_n$ incident to $p_n$. We denote these two arcs by $j^n_1$ and $j^n_3$ in the counterclockwise direction around $p_n$. Notice that, by definition of $\sigma_n$, there is exactly one arc in $\sigma_n$, different from $j^n_1$ and $j^n_3$, that is incident to $p_n$. We denote this arc by $j^n_2$, and define $\tau_n=f_{j_2^n}(\sigma_n)$ (see Figure \ref{Fig:f1f2f3}).
        \begin{figure}[!h]
                \caption{$\sigma_n$ and $\tau_n$ for $n\geq 0$}\label{Fig:f1f2f3}
                \centering
                \includegraphics[scale=.6]{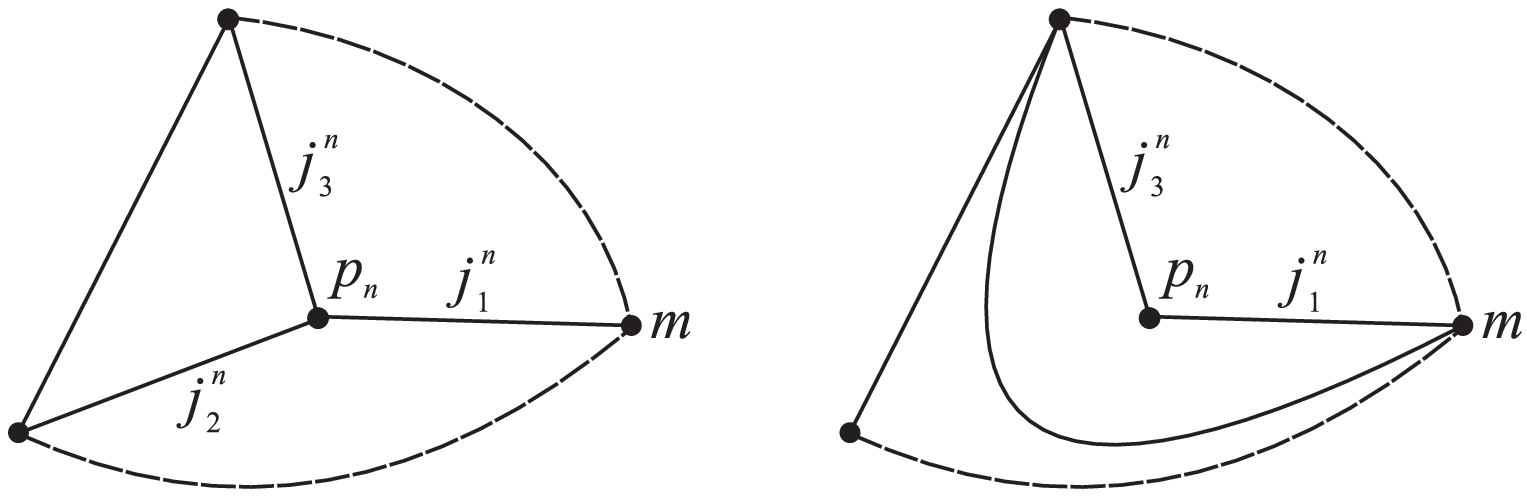}
        \end{figure}
Notice that $\tau_n$ does not have self-folded triangles.

Now, let $\epsilon_1,\epsilon_2:\punct_n\rightarrow\{-1,1\}$ be functions satisfying $\sum_{l=1}^n|\epsilon_1(p_l)-\epsilon_2(p_l)|=2$. This means that there exists exactly one puncture $p_k\in\punct_n$ such that $\epsilon_1(p_k)\neq\epsilon_2(p_k)$. Without loss of generality, we can suppose that $p_k=p_n$ and $\epsilon_1(p_n)=-1=-\epsilon_2(p_n)$. The tagged triangulations $\tau'=\tagfunction_{\epsilon_1}(\tau_n)$ and $\sigma'=f_{\arc'}(\tau')$, where $\arc'=\tagfunction_{\epsilon_1}(j_3^n)$ (see Figure \ref{Fig:tausigma}), certainly satisfy $\tau'\in\bar{\Omega}'_{\epsilon_1}$ and $\sigma'\in\bar{\Omega}'_{\epsilon_2}$.
        \begin{figure}[!h]
                \caption{$\tau'$ and $\sigma'$}\label{Fig:tausigma}
                \centering
                \includegraphics[scale=.6]{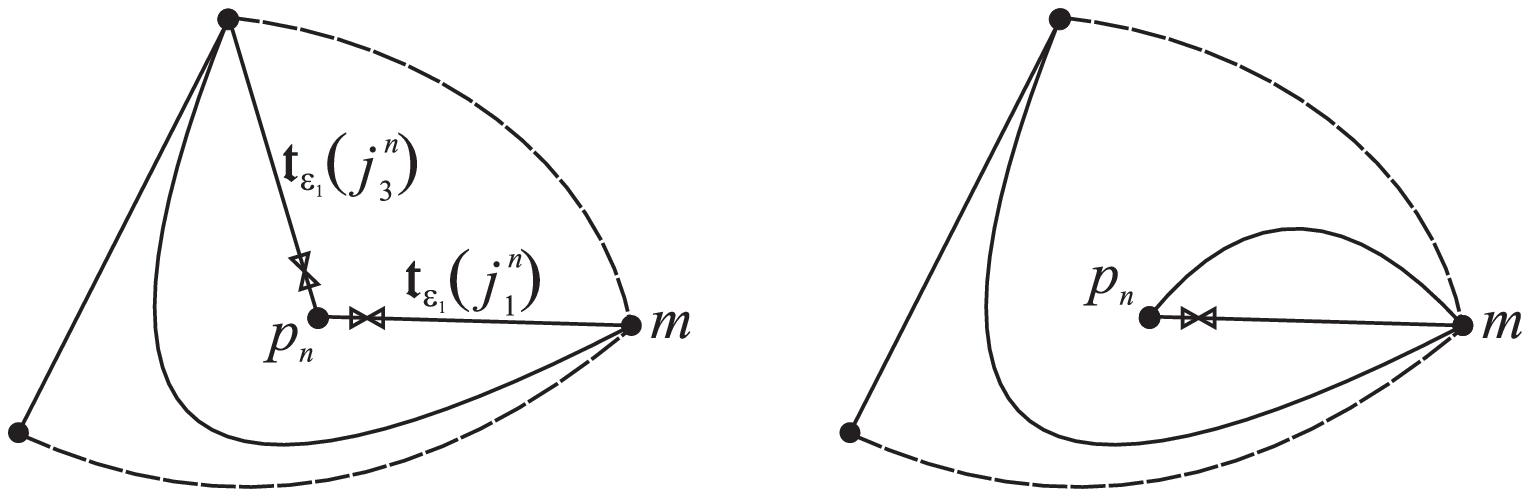}
        \end{figure}
It is obvious that $\mu_{\arc'}(Q(\tau'),S(\tau'))$ is right-equivalent to $(Q(\sigma'),S(\sigma'))$, for $\arc'$ is a sink of the quiver $Q(\tau')$. Lemma \ref{lemma:change-of-epsilon} is proved.
\end{proof}

\begin{lemma}\label{lemma:Omega'epsilon-is-connected} Fix a function $\epsilon:\punct\rightarrow\{-1,1\}$. Any two distinct elements of $\bar{\Omega}'_\epsilon$ are related by a sequence of flips of tagged triangulations belonging to $\bar{\Omega}'_\epsilon$.
\end{lemma}

\begin{proof} The lemma is a consequence of the second assertion of Proposition \ref{prop:ideal-triangs-seqs-of-flips} and the following obvious fact:
\begin{equation}\label{eq:eliminate-zero-signature-puncts}
\text{Any element of $\bar{\Omega}'_\epsilon$ either is a tagged triangulation without zero-signature punctures, or can be}
\end{equation}
\begin{center}
transformed to one such by a sequence of flips that involve only tagged triangulations belonging to $\bar{\Omega}'_\epsilon$.
\end{center}
Indeed, let $\tau$ and $\sigma$ be two distinct elements of $\bar{\Omega}'_\epsilon$. By \eqref{eq:eliminate-zero-signature-puncts} there are flip-sequences
$(\tau_0,\tau_{1},\ldots,\tau_{n})$, $(\sigma_{0},\ldots,\sigma_{m})$, of tagged triangulations belonging to $\bar{\Omega}'_\epsilon$, such that $\tau_0=\tau$, $\sigma_0=\sigma$, and none of $\tau_n$ and $\sigma_m$ has zero-signature punctures. Consequently, none of the ideal triangulations $\tau_n^\circ$ and $\sigma_m^\circ$ has self-folded triangles. By Proposition \ref{prop:ideal-triangs-seqs-of-flips}, there is a flip-sequence
$(T_0,\ldots,T_{l})$ involving only ideal triangulations without self-folded triangles, such that $T_0=\tau_n^\circ$ and $T_l=\sigma_m^\circ$. By the first statement of Proposition \ref{prop:tagfunction-and-circ}, if we apply $\tagfunction_\epsilon$ to each of the ideal triangulations
$T_0,\ldots,T_l$, we obtain a flip-sequence $(\tagfunction_\epsilon(T_0),\ldots,\tagfunction_\epsilon(T_{l}))$
of tagged triangulations belonging to $\bar{\Omega}'_\epsilon$. We conclude that $(\tau_0,\tau_{1},\ldots,\tau_{n},\tagfunction_\epsilon(T_1),\ldots,\tagfunction_\epsilon(T_{l-1}),\sigma_{m},\ldots,
\sigma_{0})$ is a flip-sequence of elements of $\bar{\Omega}'_\epsilon$ connecting $\tau$ with $\sigma$.
\end{proof}

\begin{lemma}\label{lemma:flip<->mutation-for-same-epsilon} If $\tau$ and $\sigma$ are tagged triangulations that belong to the same set $\bar{\Omega}'_\epsilon$ and are related by a single flip, then their QPs $\qstau$ and $\qssigma$ are related by the corresponding QP-mutation.
\end{lemma}

\begin{proof} This follows from the first and last statements of Proposition \ref{prop:tagfunction-and-circ} and the fact that $\qstau$ (resp. $\qssigma$) is obtained from $(Q(\tau^\circ),S(\tau^\circ))$ (resp. $(Q(\sigma^\circ),S(\sigma^\circ))$) by ``relabeling" the vertices of $\tau^\circ$ (resp. $\sigma^\circ$) using $\tagfunction_{\epsilon}$. Explicitly, let $\psi_\tau$ (resp. $\psi_\sigma$) denote the inverse of the $R$-algebra isomorphism
$\tagfunction_\epsilon:R\langle\langle Q(\tau^\circ)\rangle\rangle\rightarrow R\langle\langle Q(\tau)\rangle\rangle$ (resp. $\tagfunction_\epsilon:R\langle\langle Q(\sigma^\circ)\rangle\rangle\rightarrow R\langle\langle Q(\sigma)\rangle\rangle$). Suppose that $\arc$ is the arc in $\tau$ such that $\sigma=f_\arc(\tau)$. Then $\sigma^\circ=f_{\psi_\tau(\arc)}(\tau^\circ)$. This implies, by part (a) of Theorem \ref{thm:flip<->mutation-ideal-triangs}, that there exists a right-equivalence $\varphi:\mu_{\psi_{\tau}(\arc)}(Q(\tau^\circ),S(\tau^\circ))\longrightarrow(Q(\sigma^\circ),S(\sigma^\circ))$. The composition $\tagfunction_\epsilon\circ\varphi\circ\psi_\sigma:\mu_\arc\qstau\longrightarrow\qssigma$ is then a right-equivalence that proves Lemma \ref{lemma:flip<->mutation-for-same-epsilon}.
\end{proof}

To finish the proof of Theorem \ref{thm:flip<->mutation-tagged-triangs}, note that if $\tau$ and $\sigma$ are tagged triangulations related by a single flip, then the functions $\epsilon_\tau$ and $\epsilon_\sigma$ (such that $\tau\in\bar{\Omega}'_{\epsilon_\tau}$ and $\sigma\in\bar{\Omega}'_{\epsilon_\sigma}$) either are equal or satisfy
$$
\sum_{p\in\punct}|\epsilon_\tau(p)-\epsilon_\sigma(p)|=2.
$$
Thus, Theorem \ref{thm:flip<->mutation-tagged-triangs} follows from Lemmas \ref{lemma:change-of-epsilon}, \ref{lemma:Omega'epsilon-is-connected}, \ref{lemma:flip<->mutation-for-same-epsilon} and an easy induction. Indeed, let $\tau$ and $\sigma$ be arbitrary tagged triangulations of $\surf$, not necessarily related by a single flip. Then $\tau\in\bar{\Omega}'_{\epsilon_\tau}$ and $\sigma\in\bar{\Omega}'_{\epsilon_\sigma}$ for some functions $\epsilon_\tau,\epsilon_\sigma:\punct\to\{-1,1\}$. If $\epsilon_\tau=\epsilon_\sigma$, then Lemmas \ref{lemma:Omega'epsilon-is-connected} and \ref{lemma:flip<->mutation-for-same-epsilon} imply the existence of a flip-sequence from $\tau$ to $\sigma$ along which flips are compatible with QP-mutations. Otherwise, if $\epsilon_\tau\neq\epsilon_\sigma$, let $q\in\punct$ be such that $\epsilon_\tau(q)\neq\epsilon_\sigma(q)$. There certainly exists a tagged triangulation $\rho$ of $\surf$ such that $\{p\in\punct\suchthat\epsilon_\tau(p)\neq\epsilon_\rho(p)\}=\{q\}$, that is, such that $\sum_{p\in\punct}|\epsilon_\tau(p)-\epsilon_\rho(p)|=2$. Applying Lemmas \ref{lemma:change-of-epsilon} and \ref{lemma:flip<->mutation-for-same-epsilon} we obtain the existence of a flip-sequence from $\tau$ to $\rho$ along which flips are compatible with QP-mutations. On the other hand, the fact that $\{p\in\punct\suchthat\epsilon_\tau(p)\neq\epsilon_\rho(p)\}=\{q\}$ implies $|\{p\in\punct\suchthat\epsilon_\rho(p)\neq\epsilon_\sigma(p)\}|
<|\{p\in\punct\suchthat\epsilon_\tau(p)\neq\epsilon_\sigma(p)\}|$, and hence we can inductively assume that there exists a flip-sequence from $\rho$ to $\sigma$ along which flips are compatible with QP-mutations. This way we obtain a flip-sequence from $\tau$ to $\sigma$ along which flips are compatible with QP-mutations. This finishes the proof of Theorem \ref{thm:flip<->mutation-tagged-triangs}.
\end{proof}

Our second main result is the following corollary, which says that flip is compatible with mutation at least at the level of Jacobian algebras.

\begin{coro}\label{coro:flip<->mutation-jacobian} Let $\surf$ be a surface with non-empty boundary and $\tau$ and $\sigma$ be tagged triangulations of $\surf$. If $\sigma=f_\arc(\tau)$, then the Jacobian algebras $\mathcal{P}(\muti\qstau)$ and $\mathcal{P}\qssigma$ are isomorphic.
\end{coro}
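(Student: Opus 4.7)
The plan is to combine Amiot's categorification with Theorem~\ref{thm:flip<->mutation-tagged-triangs} and the Fomin-Shapiro-Thurston identification of the exchange graph of tagged triangulations of $\surf$ with the exchange graph of any cluster algebra $\myAA$ associated to $\surf$. First, Theorem~\ref{thm:flip<->mutation-tagged-triangs} places $\qstau$ in the same mutation class (up to right-equivalence) as the QP of any ideal triangulation; hence by Theorem~\ref{thm:flip<->mutation-ideal-triangs}, $\qstau$ is Jacobi-finite and non-degenerate. Amiot's construction then produces a Hom-finite 2-Calabi-Yau generalized cluster category $\mathcal{C}=\mathcal{C}_{\surf}$ equipped with a canonical cluster-tilting object $T_{\tau}$ whose opposite endomorphism algebra is $\mathcal{P}\qstau$. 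Under Amiot's theory, QP-mutation at $\arc$ lifts to mutation of cluster-tilting objects at the summand indexed by $\arc$, so letting $T'=\muti(T_{\tau})$ one has $\operatorname{End}_{\mathcal{C}}(T')^{\operatorname{op}}\cong\mathcal{P}(\muti\qstau)$.

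Next I would apply Theorem~\ref{thm:flip<->mutation-tagged-triangs} to obtain some flip sequence from $\tau$ to $\sigma$ and lift each consecutive flip to a mutation of cluster-tilting objects in $\mathcal{C}$; this yields a cluster-tilting object $T_{\sigma}\in\mathcal{C}$ with $\operatorname{End}_{\mathcal{C}}(T_{\sigma})^{\operatorname{op}}\cong\mathcal{P}\qssigma$. The proof thereby reduces to the assertion that $T'\cong T_{\sigma}$ in $\mathcal{C}$, for then we immediately obtain $\mathcal{P}(\muti\qstau)\cong\operatorname{End}_{\mathcal{C}}(T')^{\operatorname{op}}\cong\operatorname{End}_{\mathcal{C}}(T_{\sigma})^{\operatorname{op}}\cong\mathcal{P}\qssigma$.

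To establish $T'\cong T_{\sigma}$, I would view the exchange graph of cluster-tilting objects in $\mathcal{C}$ reachable from $T_{\tau}$, the exchange graph of tagged triangulations of $\surf$, and the exchange graph of seeds of $\myAA$ as three quotients of the $n$-regular tree $\TT$. By Theorem~\ref{thm:flip<->mutation-tagged-triangs}, the map from tagged triangulations to cluster-tilting objects is a well-defined surjection compatible with the labels of $\TT$, while by Fomin-Shapiro-Thurston the map from tagged triangulations to seeds is an isomorphism. A diagram chase among these quotients of $\TT$ shows that a cluster-tilting object in the mutation class of $T_{\tau}$ is determined up to isomorphism by its associated seed. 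Both $T'$ and $T_{\sigma}$ project to the seed attached to $\sigma$ (for $T'$, because QP-mutation at $\arc$ projects under Amiot's theory to seed mutation at $\arc$, which by Fomin-Shapiro-Thurston matches the flip $\sigma=f_{\arc}(\tau)$; for $T_{\sigma}$, by construction via the lifted flip sequence), so $T'\cong T_{\sigma}$.

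The main obstacle is the quotient-identification step: one must check that an edge of $\TT$ collapses under the quotient to the graph of cluster-tilting objects if and only if it collapses under the quotient to the graph of seeds. The ``only if'' direction follows from Amiot's theory, since mutation of a cluster-tilting object is visible on the level of the associated seed. The ``if'' direction is the genuinely non-trivial one, and it is precisely here that the full force of Theorem~\ref{thm:flip<->mutation-tagged-triangs} together with the Fomin-Shapiro-Thurston identification is needed, in order to rule out a proper covering of the seed exchange graph by the cluster-tilting object exchange graph on the relevant mutation component.
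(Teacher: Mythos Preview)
Your proposal is correct and follows essentially the same route as the paper. The paper packages the exchange-graph comparison you spell out into its Theorem~\ref{thm:cluster-category-for-surface} (and the remark following it), then argues exactly as you do: Theorem~\ref{thm:flip<->mutation-tagged-triangs} identifies $\operatorname{End}_{\mathcal{C}}(T_\tau)$ and $\operatorname{End}_{\mathcal{C}}(T_\sigma)$ with $\mathcal{P}\qstau$ and $\mathcal{P}\qssigma$, the single flip $\sigma=f_\arc(\tau)$ corresponds to a single IY-mutation $T_\sigma\cong\mu_\arc(T_\tau)$, and the general behavior of endomorphism algebras under IY-mutation in Hom-finite 2-Calabi-Yau categories gives $\operatorname{End}_{\mathcal{C}}(\mu_\arc(T_\tau))\cong\mathcal{P}(\mu_\arc\qstau)$.
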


Before proving this corollary, let us recall the cluster categorification of surfaces with non-empty boundary.
%

\begin{thm}\cite[Subsection 3.4]{Amiot-survey}\label{thm:cluster-category-for-surface} Let $\surf$ be a surface with non-empty boundary. There exists a $\Hom$-finite triangulated 2-Calabi-Yau category $\mathcal{C}_{\surf}$ with a cluster-tilting object $T_\tau$ associated to each tagged triangulation $\tau$ of $\surf$ in such a way that tagged triangulations related by a flip give rise to cluster-tilting objects related by the corresponding IY-mutation.\footnote{``IY" after Iyama-Yoshino.}
\end{thm}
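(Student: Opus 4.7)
The plan is to prove this by invoking Amiot's generalized cluster category construction applied to any one of the Jacobi-finite QPs $\qstau$, and then using the flip-mutation compatibility already established in Theorem \ref{thm:flip<->mutation-tagged-triangs} to produce cluster-tilting objects indexed by tagged triangulations.

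First I would fix any base tagged triangulation $\tau_{0}$ of $\surf$. By Theorem \ref{thm:flip<->mutation-ideal-triangs} the QP associated to an ideal triangulation is Jacobi-finite, and combining Theorem \ref{thm:flip<->mutation-tagged-triangs} with item (5) of Theorem \ref{thm:propertiesofQP-mutations} this Jacobi-finiteness propagates to $(Q(\tau_{0}),S(\tau_{0}))$ for arbitrary tagged $\tau_{0}$. I would then apply Amiot's construction to the complete Ginzburg dg-algebra $\Gamma_{0}=\Gamma(Q(\tau_{0}),S(\tau_{0}))$, defining $\mathcal{C}_{\surf}$ as the Verdier quotient $\operatorname{per}(\Gamma_{0})/\mathcal{D}_{\operatorname{fd}}(\Gamma_{0})$. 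The cited result of Amiot guarantees that this quotient is Hom-finite, triangulated, and 2-Calabi-Yau, and that the image $T_{\tau_{0}}$ of $\Gamma_{0}$ itself is a cluster-tilting object with $\Hom$-ring $\mathcal{P}\qstau[\tau_{0}]$.

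Next, for an arbitrary tagged triangulation $\tau$, Theorem \ref{thm:flip<->mutation-tagged-triangs} supplies a flip-sequence $\tau_{0},\tau_{1},\ldots,\tau_{\ell}=\tau$ along which flips are compatible with QP-mutations. I would define $T_{\tau_{k}}$ inductively as the Iyama-Yoshino mutation of $T_{\tau_{k-1}}$ at the indecomposable summand indexed by $\arc_{k}$. Invoking Keller-Yang's theorem, which identifies QP-mutation of $(Q(\tau_{k-1}),S(\tau_{k-1}))$ with a specific derived equivalence of $\operatorname{per}(\Gamma_{k-1})$ that descends to IY-mutation at the cluster-tilting level, each $T_{\tau_{k}}$ is a cluster-tilting object of $\mathcal{C}_{\surf}$ whose endomorphism algebra is $\mathcal{P}(Q(\tau_{k}),S(\tau_{k}))$ and whose indecomposable summands are canonically labeled by the tagged arcs in $\tau_{k}$. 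In particular, the assignment $\tau\mapsto T_{\tau}$ has exactly the flip $\leftrightarrow$ IY-mutation compatibility asserted in the theorem.

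The main obstacle will be the well-definedness of $T_{\tau}$: the cluster-tilting object must not depend on the chosen flip-sequence from $\tau_{0}$ to $\tau$. To settle this I would appeal to Fomin-Shapiro-Thurston's identification of the exchange graph of tagged triangulations with the exchange graph of the cluster algebra $\myAA(B(\tau_{0}))$, together with the known fact that the IY-mutation exchange graph of cluster-tilting objects in a Hom-finite 2-CY category is a covering of the corresponding cluster exchange graph (via the Caldero-Chapoton / cluster-character formalism), so that any two flip-sequences from $\tau_{0}$ to $\tau$ compose to cycles that lift to trivial cycles on the cluster-tilting side. Once this coherence is in place, all remaining assertions are formal consequences of Amiot's theorem and Keller-Yang's derived-equivalence result, so no further technical input is required beyond what is already assembled in Sections \ref{sec:background}--\ref{sec:QPs-of-tagged-triangulations}.
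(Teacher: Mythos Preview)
The paper does not give its own proof of this theorem; it is quoted from Amiot's survey, and the remark immediately following the statement sketches why it holds. Crucially, that remark stresses that the existence of cluster-tilting objects indexed by \emph{all} tagged triangulations is \emph{independent} of Theorem~\ref{thm:flip<->mutation-tagged-triangs}. The argument indicated there is: construct $\mathcal{C}_{\surf}$ as Amiot's generalized cluster category of $(Q(\sigma),S(\sigma))$ for any \emph{ideal} triangulation $\sigma$ (Jacobi-finiteness and non-degeneracy coming from \cite{Lqps}, i.e.\ Theorem~\ref{thm:flip<->mutation-ideal-triangs}); then use Fomin--Shapiro--Thurston's identification of the cluster exchange graph with $\mathbf{E}^{\bowtie}\surf$, together with the general fact that, for a Jacobi-finite non-degenerate QP, the exchange graph of reachable cluster-tilting objects coincides with that of the principal-coefficient cluster algebra. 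This already yields a well-defined $T_\tau$ for every tagged triangulation, compatible with flips and IY-mutations, without any appeal to the QP-mutation compatibility for tagged triangulations proved in the present paper.

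Your route, by contrast, invokes Theorem~\ref{thm:flip<->mutation-tagged-triangs} twice: once to propagate Jacobi-finiteness to an arbitrary tagged base point $\tau_{0}$, and again to supply the specific flip sequences along which you perform IY-mutations and apply Keller--Yang. Neither use is needed. IY-mutation of a basic cluster-tilting object is always defined in a $\Hom$-finite 2-CY category, so one may mutate along \emph{any} flip sequence; whether the intermediate endomorphism algebras happen to be the Jacobian algebras $\mathcal{P}(Q(\tau_k),S(\tau_k))$ is irrelevant to the statement. Also, your well-definedness argument is phrased too weakly: a mere covering of exchange graphs would not force a cycle downstairs to lift to a cycle upstairs; what one actually uses (and what the paper's remark invokes) is that the map from reachable cluster-tilting objects to clusters in the principal-coefficient cluster algebra is a bijection, so the two exchange graphs are isomorphic. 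Your plan is thus correct in outline but more involved than necessary, and its dependence on Theorem~\ref{thm:flip<->mutation-tagged-triangs} obscures the logical point the paper is making: Theorem~\ref{thm:cluster-category-for-surface} is prior input, used afterwards to deduce Corollary~\ref{coro:flip<->mutation-jacobian}.
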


In particular, the exchange graph of cluster-tilting objects reachable from any fixed $T_\tau$ coincides with $\mathbf{E}^{\bowtie}\surf$. (The vertices of $\mathbf{E}^{\bowtie}\surf$ are the tagged triangulations of $\surf$, and there is an edge connecting two tagged triangulations $\tau$ and $\sigma$ if and only if $\tau$ and $\sigma$ are related by the flip of a tagged arc. We omit the definition of the categorical concepts involved, and refer the reader to the papers \cite{Amiot-gldim2} and \cite{Amiot-survey} by Amiot.)

\begin{remark}\begin{enumerate}\item The existence of $\mathcal{C}_{\surf}$ follows by a combination of results from \cite{Amiot-gldim2} and \cite{Lqps}. More specifically, for each ideal triangulation $\tau$ of $\surf$, the QP $\qstau$ is non-degenerate and Jacobi-finite \cite{Lqps}, hence gives rise to a generalized cluster category \cite{Amiot-gldim2}. Since ideal triangulations related by a flip have QPs related by QP-mutation \cite{Lqps}, the cluster categories they give rise to are triangle-equivalent \cite{Amiot-gldim2}. Thus, $\mathcal{C}_{\surf}$ is defined to be the generalized cluster category of any of the QPs associated to ideal triangulations.
\item The fact that all tagged triangulations (and not only ideal ones) have cluster-tilting objects associated in $\mathcal{C}_{\surf}$ is independent of the results of the present paper. Indeed, Fomin-Shapiro-Thurston have proved that all cluster algebras associated to a signed adjacency quiver $Q(\tau)$ arising from a surface with non-empty boundary have the same exchange graph $\mathbf{E}^{\bowtie}\surf$. Thus, the fact that tagged triangulations have cluster-tilting objects associated to them is a consequence of the fact that the exchange graph of the principal coefficient cluster algebra of a quiver $Q$ coincides with the exchange graph of cluster-tilting objects IY-equivalent to the canonical cluster-tilting object of $\mathcal{C}_{(Q,S)}$, provided $S$ is a Jacobi-finite non-degenerate potential on $Q$.
\end{enumerate}
\end{remark}

\begin{proof}[Proof of Corollary \ref{coro:flip<->mutation-jacobian}] Take an arbitrary pair of tagged triangulations $\tau$ and $\sigma$ related by a single flip, say $\sigma=f_\arc(\tau)$. By Theorem \ref{thm:flip<->mutation-tagged-triangs}, there are flip-sequences $(\tau,\tau_1,\ldots,\tau_t)$ and $(\sigma,\sigma_1,\ldots,\sigma_s)$, such that $\tau_t$ and $\sigma_s$ are ideal triangulations, $\mu_{\arc_k}(Q(\tau_{k-1}),S(\tau_{k-1}))$ is right-equivalent to $(Q(\tau_{k}),S(\tau_k))$ for $k=1,\ldots,t$, and $\mu_{\arc_k}(Q(\sigma_{k-1}),S(\sigma_{k-1}))$ is right-equivalent to $(Q(\sigma_{k}),S(\sigma_k))$ for $k=1,\ldots,s$. Therefore, the endomorphism algebras of the cluster-tilting objects of $\mathcal{C}_{\surf}$ corresponding to $\tau$ and $\sigma$ are precisely the Jacobian algebras $\mathcal{P}\qstau$ and $\mathcal{P}\qssigma$. The result follows from the corresponding property of IY-mutation in $\Hom$-finite 2-Calabi-Yau triangulated categories.
\end{proof}

\begin{ex}\label{ex:3-punctured-hexagon} Consider the tagged triangulations $\tau$ and $\sigma=f_\arc(\tau)$ of the three-times-punctured hexagon shown in Figure \ref{Fig:hexagon_3_punct}, where the quivers $\qtau$ and $\qsigma$ are drawn as well.
        \begin{figure}[!h]
                \caption{}\label{Fig:hexagon_3_punct}
                \centering
                \includegraphics[scale=.45]{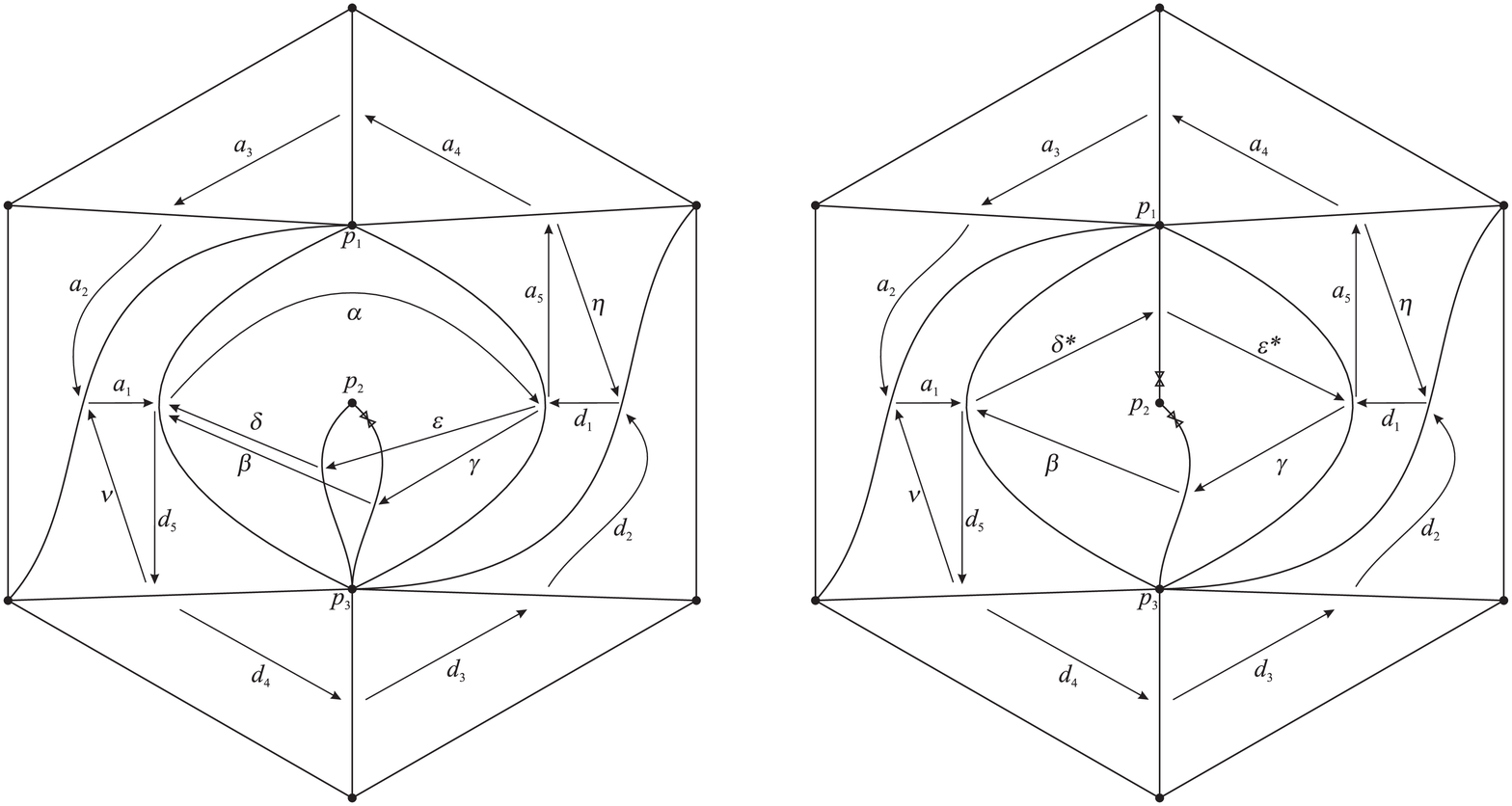}
        \end{figure}
As for the potentials, we have
$$
\stau=\alpha\beta\gamma+a_1\nu d_5+a_5d_1\eta+x_{p_1}\alpha a_1a_2a_3a_4a_5-x_{p_2}^{-1}\alpha\delta\varepsilon+x_{p_3}\delta\varepsilon d_1d_2d_3d_4d_5 \ \ \ \text{and}
$$
$$
\ssigma=a_1\nu d_5+ a_5d_1\eta+x_{p_1}\varepsilon^*\delta^*a_1a_2a_3a_4a_5-x_{p_2}^{-1}\varepsilon^*\delta^*\beta\gamma+x_{p_3}\beta\gamma d_1d_2d_3d_4d_5.
$$
If we apply the $\arc^{\operatorname{th}}$ QP-mutation to $\qstau$ we obtain the QP $(\mu_\arc(\qtau),\mu_\arc(\stau))$, where $\mu_\arc(\qtau)=\qsigma$ and
$$\mu_\arc(\stau)=a_1\nu d_5+a_5d_1\eta+x_{p_1}x_{p_2}\varepsilon^*\delta^* a_1a_2a_3a_4a_5+x_{p_2}\varepsilon^*\delta^*\beta\gamma
+x_{p_2}x_{p_3}\beta\gamma d_1d_2d_3d_4d_5+
$$
$$
+x_{p_1}x_{p_2}x_{p_3} d_1d_2d_3d_4d_5a_1a_2a_3a_4a_5.
$$
%
According to Corollary \ref{coro:flip<->mutation-jacobian}, the Jacobian algebras $\mathcal{P}\qssigma$ and
$\mathcal{P}(\mu_\arc(\qtau),$ $\mu_\arc(\stau))$ are isomorphic. Actually something stronger happens, namely, the $R$-algebra isomorphism $\varphi:R\langle\langle\qsigma\rangle\rangle\rightarrow R\langle\langle\mu_\arc(\qtau)\rangle\rangle$ given by
$$
a_1\mapsto-a_1, \ \ \ \nu \mapsto -\nu-x_{p_1}x_{p_2}x_{p_3}a_2a_3a_4a_5d_1d_2d_3d_4, \ \ \ \varepsilon^*\mapsto -x_{p_2}\varepsilon^*, \ \ \ \beta\mapsto x_{p_2}\beta,
$$
and the identity on the remaining arrows of $\qsigma$, is a right-equivalence $\qssigma\rightarrow(\mu_\arc(\qtau),\mu_\arc(\stau))$. So, the Jacobian algebras are isomorphic.
\end{ex}
%

\section{Admissibility of the Jacobian ideal}\label{sec:admissibility}

\begin{defi}[Admissibility condition]\label{def:admissibility} We say that \emph{the admissibility condition holds for a QP $(Q,S)$} if
\begin{enumerate}\item $(Q,S)$ is non-degenerate;
\item $S\in R\langle Q\rangle$, that is, $S$ is a finite linear combination of cyclic paths on $Q$;
\item the Jacobian algebra $\jacobqs$ is finite-dimensional; and
\item the $R$-algebra homomorphism $\phi_{(Q,S)}:R\langle Q\rangle/J_0(S)\rightarrow \jacobqs$ induced by the inclusion $R\langle Q\rangle\hookrightarrow R\langle\langle Q\rangle\rangle$ is an isomorphism.
\end{enumerate}
\end{defi}

\begin{remark}\begin{enumerate}\item If the admissibility condition holds for $(Q,S)$, then $R\langle Q\rangle/J_0(S)$ is a finite-dimensional $R\langle\langle Q\rangle\rangle$-module, and is therefore nilpotent. Thus, $J_0(S)$ contains a power of the ideal of $R\langle Q\rangle$ generated by the arrows of $Q$. On the other hand, non-degeneracy implies that $Q$ is 2-acyclic and hence all cycles appearing in $S$ have length at least three. Thus, $J_0(S)$ is contained in the square of the ideal of $R\langle Q\rangle$ generated by the arrows of $Q$. In other words, if the admissibility condition holds for $(Q,S)$, then $J_0(S)$ is an \emph{admissible ideal} of $R\langle Q\rangle$.
\item It is not true that conditions (1), (2) and (3) in Definition \ref{def:admissibility} imply condition (4): In \cite[Example~35]{Lqps} and \cite[Example~8.2]{Lreps} it is shown that the QP $\qstau$ associated to an ideal triangulation $\tau$ of a once-punctured torus satisfies (1), (2) and (3), but since the quotient $R\langle Q(\tau)\rangle/J_0(S(\tau))$ is infinite-dimensional, it does not satisfy (4).
\end{enumerate}
\end{remark}

Suppose the admissibility condition holds for $(Q,S)$. It is natural to ask whether the admissibility condition holds after applying a QP-mutation $\mu_i$ to $(Q,S)$. The following proposition says that the answer is yes as long as we can find a suitable right-equivalence when we apply the reduction process to the premutation $\premuti(Q,S)$.

\begin{prop}\label{prop:admissibility-of-mutation} Let $(Q,S)$ be a QP for which the admissibility condition holds, and fix $\arc\in Q_0$. Suppose $W$ is a finite reduced potential on $\mu_{\arc}(Q)$ and $(C,T)$ is a trivial QP, such that there exists a right-equivalence $\varphi:\widetilde{\mu}_{\arc}(Q,S)\longrightarrow(\mu_{\arc}(Q),W)\oplus(C,T)$ that restricts to an isomorphism between the path algebras $R\langle\widetilde{\mu}_{\arc}(Q)\rangle$ and $R\langle\mu_{\arc}(Q)\oplus C\rangle$. Then the admissibility condition holds for $(\mu_{\arc}(Q),W)$.
\end{prop}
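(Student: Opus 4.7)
I will verify the four conditions of Definition~\ref{def:admissibility} for $(\mu_\arc(Q),W)$ in turn. The Splitting Theorem, combined with the given right-equivalence $\varphi$, identifies $(\mu_\arc(Q),W)$ as a representative of the reduced part of $\widetilde{\mu}_\arc(Q,S)$, so $(\mu_\arc(Q),W)$ is right-equivalent to $\mu_\arc(Q,S)$. This latter QP is non-degenerate because $(Q,S)$ is, and its Jacobian algebra is finite-dimensional by Theorem~\ref{thm:propertiesofQP-mutations}(5); both properties transfer through right-equivalence, yielding conditions (1) and (3). Condition (2) is in the hypothesis.

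For condition (4) I first transfer the problem to the premutation $(\widetilde{\mu}_\arc(Q),\widetilde{\mu}_\arc(S))$. Because $\varphi$ restricts to an $R$-algebra isomorphism of non-completed path algebras and $\varphi(\widetilde{\mu}_\arc(S))$ is cyclically equivalent to $W+T$ as finite potentials, the resulting equality $J_0(\varphi(\widetilde{\mu}_\arc(S)))=J_0(W+T)$, together with the non-completed analogue of DWZ's invariance of Jacobian ideals under right-equivalences, gives compatible isomorphisms
\[
R\langle\widetilde{\mu}_\arc(Q)\rangle/J_0(\widetilde{\mu}_\arc(S))\;\cong\; R\langle\mu_\arc(Q)\oplus C\rangle/J_0(W+T), \qquad \mathcal{P}(\widetilde{\mu}_\arc(Q),\widetilde{\mu}_\arc(S))\;\cong\; \mathcal{P}(\mu_\arc(Q)\oplus C,W+T).
\]
The trivial summand $(C,T)$ can then be removed: the degree-$1$ elements $\{\partial_cT\}_{c\in C_1}$ span the arrow span of $C$, so every arrow of $C$ lies in $J_0(W+T)$, and the $R$-algebra retraction $\pi:R\langle\mu_\arc(Q)\oplus C\rangle\to R\langle\mu_\arc(Q)\rangle$ killing those arrows identifies $R\langle\mu_\arc(Q)\rangle\cap J_0(W+T)=J_0(W)$; the unique continuous extension of $\pi$ to complete path algebras does the same for $J(W+T)$. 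This yields further isomorphisms $R\langle\mu_\arc(Q)\oplus C\rangle/J_0(W+T)\cong R\langle\mu_\arc(Q)\rangle/J_0(W)$ and $\mathcal{P}(\mu_\arc(Q)\oplus C,W+T)\cong\mathcal{P}(\mu_\arc(Q),W)$ compatible with the canonical maps. Chaining these identifications, condition (4) for $(\mu_\arc(Q),W)$ becomes equivalent to the natural map $R\langle\widetilde{\mu}_\arc(Q)\rangle/J_0(\widetilde{\mu}_\arc(S))\to\mathcal{P}(\widetilde{\mu}_\arc(Q),\widetilde{\mu}_\arc(S))$ being an isomorphism.

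Since the target of this natural map is finite-dimensional, surjectivity is automatic; the remaining content is injectivity. I reduce injectivity, by a routine degree-truncation argument, to the key lemma: the ideal $J_0(\widetilde{\mu}_\arc(S))$ contains a sufficiently high power of the ideal of $R\langle\widetilde{\mu}_\arc(Q)\rangle$ generated by the arrows. The proof of the key lemma---which is the principal technical obstacle---exploits the explicit relations modulo $J_0(\widetilde{\mu}_\arc(S))$ provided by $\widetilde{\mu}_\arc(S)=[S]+\Delta_\arc(Q)$: $b^*a^*\equiv-\partial_{[ab]}([S])$ for each $\arc$-hook $ab$, $\sum_b[ab]b^*\equiv 0$ and $\sum_a a^*[ab]\equiv 0$, and $\partial_c([S])\equiv 0$ for each arrow $c$ of $Q$ not incident to $\arc$. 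The first relation is used iteratively to eliminate each interior visit of a path to $\arc$ at the cost of raising the length by at most $\deg S-3$, reducing an arbitrary path, modulo $J_0$, to a combination of paths whose $\ast$-arrows appear only at the endpoints. For those paths, the $R$-algebra map $\rho:R\langle\widetilde{\mu}_\arc(Q)\rangle\to R\langle Q\rangle$ defined by $[ab]\mapsto ab$ and $a^*,b^*\mapsto 0$ restricts on the non-$\ast$ subalgebra to produce paths in $Q$ of length no smaller than the original, and admissibility of $(Q,S)$ places their $\rho$-images in $J_0(S)$ once they are long enough.

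The delicate part is the converse lifting: from a representation $\rho(p)=\sum_\alpha u_\alpha\partial_{c_\alpha}(S)v_\alpha$ in $R\langle Q\rangle$ one must recover an expression witnessing $p\in J_0(\widetilde{\mu}_\arc(S))$. The natural recipe is to replace every $\arc$-hook subpath in each $u_\alpha$ or $v_\alpha$ by the arrow $[ab]$, which lifts $u_\alpha,v_\alpha$ to elements of $R\langle\widetilde{\mu}_\arc(Q)\rangle$, and then to produce a lift for each $\partial_{c_\alpha}(S)$: when $c_\alpha$ is not incident to $\arc$ it coincides with $\partial_{c_\alpha}(\widetilde{\mu}_\arc(S))=\partial_{c_\alpha}([S])$, and when $c_\alpha$ is incident to $\arc$, say $c_\alpha=a$ with $t(a)=\arc$, the identity $\partial_a(S)=\sum_{b:\,h(b)=\arc}b\cdot\rho(\partial_{[ab]}([S]))$ re-expresses it as a combination of terms already in the image of $\rho$; residual $\ast$-arrow mismatches caused by some $u_\alpha$ or $v_\alpha$ starting or ending at $\arc$ are then absorbed using $\sum_b[ab]b^*\equiv 0$ and $\sum_a a^*[ab]\equiv 0$. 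Because $S$ is a finite potential, the degrees involved are bounded and the rewriting terminates, yielding the witness. Once the key lemma is established, all the earlier reductions combine to complete the verification of condition (4) and the proof.
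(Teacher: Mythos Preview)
Your overall strategy matches the paper's: both reduce condition~(4) to the premutation, both transfer through $\varphi$ to split off the trivial part, and both establish the key lemma that $J_0(\widetilde{\mu}_\arc(S))$ contains a power of the arrow ideal by (i)~using $b^*a^*\equiv -\partial_{[ab]}([S])$ to eliminate interior passages through $\arc$, (ii)~passing to $R\langle Q\rangle$ via the bracketing map, (iii)~invoking admissibility of $(Q,S)$, and (iv)~lifting back. Your lifting recipe is exactly DWZ's identities (6.6)--(6.8), which is what the paper cites; the ``routine degree-truncation'' you mention for injectivity is also what the paper does explicitly.

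There is one quantitative slip in step~(i). You say the rewriting ``rais[es] the length by at most $\deg S-3$'', but the danger runs the other way: the substitution can \emph{shorten} the path, since a length-$2$ subpath $b^*a^*$ is replaced by terms of $\partial_{[ab]}([S])$ that may have length as small as $1$. What one actually needs for the argument to close is a \emph{lower} bound on the rewritten length, so that its image in $R\langle Q\rangle$ is still long enough to land in $J_0(S)$. The paper supplies this: because $\widetilde{\mu}_\arc(Q)$ has no $2$-cycles through $\arc$, between any two consecutive $\arc$-hooks $b^*a^*$ in a given path there must sit at least one arrow (or composite arrow $[ab]$) not incident to $\arc$; hence at most half of the arrows are consumed by the rewriting, giving $\ell'\geq(\ell-1)/2$ and the explicit threshold $2r+9$. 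With that correction your argument goes through and coincides with the paper's proof.
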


\begin{proof} The QP $(\mu_{\arc}(Q),W)$ certainly lies in the right-equivalence class of the $\arc^{\operatorname{th}}$ mutation of $(Q,S)$. Therefore we only need to prove that $\phi_{(\mu_{\arc}(Q),W)}:R\langle \mu_{\arc}(Q)\rangle/J_0(W)\rightarrow \mathcal{P}(\mu_{\arc}(Q),W)$ is an isomorphism.

Since $(Q,S)$ satisfies the admissibility condition, there exists a positive integer $r$ such that every path on $Q$ of length greater than $r$ belongs to the ideal $J_0(S)$ of $R\langle Q\rangle$ generated by the cyclic derivatives of $S$. We claim that
\begin{equation}\label{eq:bound-for-length-inR<premut(qtau)>}\text{every path on $\premuti(Q)$ of length greater than $2r+9$ belongs to $J_0(\premuti(S))\subseteq R\langle\premuti(Q)\rangle$.}
\end{equation}
%
To prove \eqref{eq:bound-for-length-inR<premut(qtau)>} it suffices to show that if $a_1\ldots a_\ell$ is a path on $\premuti(Q)$ of length $\ell>2r+7$ that does not start nor end at $\arc$, then $a_1\ldots a_\ell\in J_0(\premuti(S))$. Since $\premuti(Q)$ does not have 2-cycles incident at $\arc$, we see that for any two $i$-hooks of $\premuti(Q)$ appearing in $a_1\ldots a_\ell$ there is at least an arrow or $i$-hook of $\qtau$ appearing in $a_1\ldots a_\ell$ between the given $i$-hooks of $\premuti(\qtau)$. From this fact and the identity
$$
\alpha^*\beta^*=\partial_{[\beta\alpha]}(\premuti(S))-\partial_{[\beta\alpha]}([S]),
$$
which holds for every $\arc$-hook $\beta\alpha$ of $Q$,
we deduce that $a_1\ldots a_\ell$ is congruent, modulo $J_0(\premuti(S))$, to a finite linear combination of paths on $\premuti(Q)$ that have length at least $\frac{\ell-1}{2}>r+3$ and do not involve any of the arrows of $\premuti(Q)$ incident to $\arc$. This means that we can suppose, without loss of generality, that $a_1\ldots a_\ell$ is a path on $\premuti(Q)$ of length $\ell>r+3$ and does not have any of the arrows of $\premuti(Q)$ incident to $\arc$ as a factor. Under this assumption, we see that $a_1\ldots a_\ell$ gives rise to a path $b_1\ldots b_l$ of $Q$ of length $l\geq \ell$ such that $[b_1\ldots b_l]=a_1\ldots a_\ell$. The paths $b_1\ldots b_{l-1}$ and $b_2\ldots b_l$  belong to $J_0(S)$. From this and Equations (6.6), (6.7) and (6.8) of \cite{DWZ1}, we deduce that $a_1\ldots a_\ell=[b_1\ldots b_l]$ belongs to $J_0(\premuti(S))$. This proves our claim \eqref{eq:bound-for-length-inR<premut(qtau)>}.

Note that $\premuti(Q)=\mu_{\arc}(Q)\oplus C$ (see the fourth item in Definition \ref{def:QP-stuff}). By \cite[Lemma~3.9]{DWZ1} (the cyclic Leibniz rule) and a slight modification of the proof of \cite[Proposition~4.5]{DWZ1}, we have $R\langle\premuti(Q)\rangle=R\langle\mu_i(Q)\oplus C\rangle=R\langle \mu_i(Q)\rangle\oplus L$, where $L$ is the two-sided ideal of $R\langle\premuti(Q)\rangle$ generated by the arrows in $C$, and
\begin{equation}\label{eq:varphi-preserves-J0}
\varphi(J_0(\premuti(S)))=J_0(W+T)=J_0(W)\oplus L,
\end{equation}
where $J_0(W+T)$ is taken inside $R\langle\premuti(Q)\rangle$ and $J_0(W)$ is taken inside $R\langle\mu_{\arc}(Q)\rangle$. Now, if $c_1\ldots c_\ell$ is a path on $\mu_{\arc}(Q)$ of length $\ell\geq 2r+9$, then $\varphi^{-1}(c_1\ldots c_\ell)$ is a finite linear combination of paths on $\premuti(Q)$ that have length at least $\ell$ and hence $\varphi^{-1}(c_1\ldots c_\ell)\in J_0(\premuti(S))$ by \eqref{eq:bound-for-length-inR<premut(qtau)>}. This implies, by \eqref{eq:varphi-preserves-J0}, that $c_1\ldots c_\ell\in J_0(W)$.

Since the Jacobian algebra $\mathcal{P}(\mu_{\arc}(Q),W)$ is finite-dimensional, it is a nilpotent $\mathcal{P}(\mu_{\arc}(Q),W)$-module, and this implies the surjectivity of  the $R$-algebra homomorphism $\phi_{(\mu_{\arc}(Q),W)}:R\langle\mu_{\arc}(Q)\rangle/J_0(W)\rightarrow\mathcal{P}(\mu_{\arc}(Q),W)$.

Let us prove that $\phi_{(\mu_{\arc}(Q),W)}$ is injective as well. Let $u\in R\langle\mu_{\arc}(Q)\rangle\cap J(W)$, so that $u$ is the limit of a sequence $(u_n)_{n>0}$ of elements of $R\langle\langle\mu_{\arc}(Q)\rangle\rangle$ that belong to the two-sided ideal generated by the cyclic derivatives of $W$. Let $\ell$ be an integer greater than $2r+9$ and the lengths of the paths appearing in the expression of $u$ as a finite linear combination of paths on $\mu_{\arc}(Q)$. By \eqref{eq:convergence-in-R<<Q>>}, there is an $n>0$ such that $u_n-u$ belongs to the $\ell^{\operatorname{th}}$ power of the ideal $\mathfrak{m}$ of $R\langle\langle\mu_{\arc}(Q)\rangle\rangle$ generated by the arrows of $\mu_{\arc}(Q)$. Furthermore, we can write $u_n$ as a finite sum of products of the form $x\partial_{\gamma}(W)y$, with $x,y\in R\langle\langle\mu_{\arc}(Q)\rangle\rangle$ and $\gamma$ some arrow of $\mu_{\arc}(Q)$. That is,
$$
u_n=\sum_{\gamma}\sum_{t,s}x_{t,\gamma}\partial_\gamma(W)y_{s,\gamma}.
$$
Let $x'_{t,{\gamma}}$ (resp. $y'_{s,\gamma}$) be the element of $R\langle\mu_{\arc}(Q)\rangle$ obtained from $x_{t,\gamma}$ (resp. $y_{s,\gamma}$) by deleting the summands that are $\field$-multiples of cycles of length greater than $\ell$. Then
$$
u'_n=\sum_{\gamma}\sum_{t,s}x'_{t,\gamma}\partial_\gamma(W)y'_{s,\gamma}
$$
is an element of $J_0(W)$ such that $u'_n-u_n\in\mathfrak{m}^\ell$. Thus $u'_n-u\in\mathfrak{m}^\ell$. From this we deduce that $u'_n-u\in J_0(W)$, since both $u'_n$ and $u$ belong to $R\langle\mu_{\arc}(Q)\rangle$ and all paths of length $\ell$ belong to $J_0(W)$. Consequently, $u\in J_0(W)$. We conclude that $\phi_{(\mu_{\arc}(Q),W)}$ is indeed injective.
\end{proof}

\begin{prop}\label{prop:reduction-2steps} Let $Q$ be a loop-free quiver, and $S$ a finite potential on $Q$. Suppose that $a_1,b_1,\ldots,a_N,b_N\in Q_1$ are $2N$ distinct arrows of $Q$ such that each product $a_kb_k$ is a 2-cycle and that the degree-2 component of $S$ is $S^{(2)}=\sum_kx_ka_kb_k$ for some non-zero scalars $x_1,\ldots,x_N$. Suppose further that
\begin{equation}\label{eq:S-normal-form}
S=\sum_kx_ka_kb_k+a_ku_k+v_kb_k+S',
\end{equation}
with $\deg_a(v_k)=\deg_b(u_k)=\deg_b(v_k)=0$, and $S'$ a potential not involving any of the arrows $a_1,b_1,\ldots,$ $a_N,b_N$, where, for a nonzero $u\in R\langle Q\rangle$, $\deg_a(u)$ is the maximum integer $d$ such that there is a non-zero summand of $u$ that has $d$ appearances of elements from $\{a_1,\ldots,a_N\}$, whereas $\deg_a(0)=0$ ($\deg_b(u)$ is defined similarly). Then the reduced part $(Q_{\operatorname{red}},S_{\operatorname{red}})$, the trivial part $(Q_{\operatorname{triv}},S_{\operatorname{triv}})$ and the right-equivalence $\varphi:(Q,S)\rightarrow(Q_{\operatorname{red}},S_{\operatorname{red}})\oplus(Q_{\operatorname{triv}},S_{\operatorname{triv}})$ can be chosen in such a way that
\begin{itemize}\item $S_{\operatorname{red}}$ is a finite potential, and
\item $\varphi$ maps $R\langle Q\rangle$ onto $R\langle Q_{\operatorname{red}}\oplus Q_{\operatorname{triv}}\rangle$.
\end{itemize}
\end{prop}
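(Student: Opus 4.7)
I will build $\varphi$ as an explicit composition $\varphi_1\circ\varphi_0$ of two finite, unipotent, $R$-algebra automorphisms of the non-completed path algebra $R\langle Q\rangle$; each will kill exactly one of the two ``off-diagonal'' families $\sum_k v_kb_k$ and $\sum_k a_ku_k$ in \eqref{eq:S-normal-form}. Throughout, I will use the hypothesis $S^{(2)}=\sum_k x_ka_kb_k$ to force $|u_k|,|v_k|\geq 2$ in every summand (a length-$1$ summand of $v_k$, combined with $b_k$, would produce a $2$-cycle not of the allowed form, since the $a_j$'s and $b_j$'s are distinct and $\deg_a(v_k)=0$; symmetrically for $u_k$), and the analogous fact that $S'$ has no $2$-cycles.

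First I would define $\varphi_0\in\operatorname{Aut}_R(R\langle Q\rangle)$ by $\varphi_0(a_k)=a_k-x_k^{-1}v_k$ and $\varphi_0(c)=c$ for every other arrow. Because each $v_k$ is finite and $\deg_a(v_k)=\deg_b(v_k)=0$, $\varphi_0$ is a finite, unipotent substitution whose explicit inverse is $a_k\mapsto a_k+x_k^{-1}v_k$; since its degree-$1$ part is the identity and its higher-order correction lies in $\mathfrak{m}^2$, criterion \eqref{eq:bimodule-homomorphisms} extends it to an $R$-algebra automorphism of $R\langle\langle Q\rangle\rangle$. Expanding \eqref{eq:S-normal-form} yields
\[
\varphi_0(S)=\sum_k x_ka_kb_k+T,\qquad T:=\sum_k a_k\varphi_0(u_k)-\sum_k x_k^{-1}v_k\varphi_0(u_k)+S',
\]
and the remainder $T$ contains no $b_j$, since none of $v_k$, $u_k$, $S'$ does and $\varphi_0$ introduces no $b_j$'s.

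Next I would decompose $T$, up to cyclic equivalence, as $T\sim\sum_k a_kw_k+S_{\operatorname{red}}$ with $\deg_b(w_k)=0$ and $\deg_a(S_{\operatorname{red}})=\deg_b(S_{\operatorname{red}})=0$. This is possible because each cyclic summand of $T$ either involves no $a_j$ (absorb it into $S_{\operatorname{red}}$) or can be cyclically rotated to begin with some $a_k$ (absorb the rotated path into $a_kw_k$); the resulting $w_k$ and $S_{\operatorname{red}}$ are finite. I would then define $\varphi_1\in\operatorname{Aut}_R(R\langle Q\rangle)$ by $\varphi_1(b_k)=b_k-x_k^{-1}w_k$, identity on other arrows. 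Since $\deg_b(w_k)=0$, this is again finite and unipotent, fixes each $w_k$, fixes $S_{\operatorname{red}}$, and fixes $T$, so
\[
\varphi_1(\varphi_0(S))=\sum_k x_ka_k(b_k-x_k^{-1}w_k)+T\;\sim\;\sum_k x_ka_kb_k-\sum_k a_kw_k+\sum_k a_kw_k+S_{\operatorname{red}}=\sum_k x_ka_kb_k+S_{\operatorname{red}}.
\]

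To finish, set $\varphi:=\varphi_1\circ\varphi_0$, take $(Q_{\operatorname{triv}},S_{\operatorname{triv}})$ to be the sub-QP on the arrows $\{a_k,b_k\}_k$ with potential $\sum_k x_ka_kb_k$, and $(Q_{\operatorname{red}},S_{\operatorname{red}})$ the sub-QP on the remaining arrows with potential $S_{\operatorname{red}}$. As a composition of two triangular finite substitutions $\varphi$ is a bijection $R\langle Q\rangle\to R\langle Q_{\operatorname{red}}\oplus Q_{\operatorname{triv}}\rangle$, and by construction $\varphi(S)$ is cyclically equivalent to $S_{\operatorname{triv}}+S_{\operatorname{red}}$. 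The length bounds $|u_k|,|v_k|\geq 2$ and the absence of $2$-cycles in $S'$ force every summand of $T$, and hence of $S_{\operatorname{red}}$, to have length at least $3$, so $(Q_{\operatorname{red}},S_{\operatorname{red}})$ is reduced and $S_{\operatorname{red}}$ is finite, as required. The main technical point is the cyclic-decomposition step: one must carefully check that a $b$-free cyclic potential really does split into an ``$a$-starting'' piece and an $(a,b)$-free piece, and that the length bookkeeping rules out any new $2$-cycle appearing in $S_{\operatorname{red}}$---which is where the hypothesis on $S^{(2)}$ is used decisively.
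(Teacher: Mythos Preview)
Your argument is correct and takes a genuinely different route from the paper's. The paper performs both substitutions at once, setting $a_k\mapsto a_k-x_k^{-1}v_k$ and $b_k\mapsto b_k-x_k^{-1}u_k$ simultaneously; because $u_k$ may contain $a$-arrows, the inverse is not given by a single closed formula, and after one application the potential still has a residual term $\sum_k a_ku'_k$ with $\max_k\deg_a(u'_k)$ strictly smaller than before. The paper then finishes by induction on $\max_k\deg_a(u_k)$.

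You instead separate the two substitutions. First $\varphi_0$ kills $\sum_k v_kb_k$; since $\deg_a(v_k)=\deg_b(v_k)=0$, this map is trivially invertible on $R\langle Q\rangle$ and leaves a potential $\sum_k x_ka_kb_k+T$ with $T$ free of $b$-arrows. The cyclic decomposition $T\sim\sum_k a_kw_k+S_{\operatorname{red}}$ then lets a single $b$-substitution $\varphi_1$ absorb \emph{all} the remaining $a_kw_k$ at once, precisely because $T$ (and hence each $w_k$) contains no $b$-arrows, so $\varphi_1$ fixes $T$ and its inverse is again a one-step formula. This replaces the paper's induction by a two-step composition, at the cost of the extra bookkeeping in the cyclic decomposition; either way one lands on the same reduced and trivial parts. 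Your verification that $|u_k|,|v_k|\ge 2$ and that $S'$ has no degree-$2$ part (so $S_{\operatorname{red}}$ is genuinely reduced) is the one place where the hypothesis on $S^{(2)}$ enters, and it is handled correctly.
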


\begin{proof} 
Let $\varphi:R\langle\langle Q\rangle\rangle\rightarrow R\langle\langle Q\rangle\rangle$ be the $R$-algebra isomorphism given by
$$
a_k\mapsto a_k-x_k^{-1}v_k, \ \ \ b_k\mapsto b_k-x_k^{-1}u_k,\ \ \ \text{for $k=1,\ldots,N$},
$$
and the identity on the rest of the arrows of $Q$. Since $u_k,v_k\in R\langle Q\rangle$, $\varphi$ certainly maps $R\langle Q\rangle$ into itself. It is clear that $\varphi^{-1}$ also does if $\deg_a(u_k)=0$ for all $k=1,\ldots,N$. So, suppose that $\max\{\deg_a(u_k)\suchthat k=1,\ldots,N\}>0$. Note that since $\deg_{a}(v_k)=\deg_b(u_k)=\deg_b(v_k)=0$ for every $k=1,\ldots,N$, we can recursively define elements $u_{k,0},\ldots,u_{k,\deg_a(u_k)}$, with the following properties
\begin{itemize}
\item $u_{k,0}=x_k^{-1}u_{k}$;
\item $\varphi(u_{k,\ell})=u_{k,\ell}+u_{k,\ell+1}$ for $\ell=0,\ldots,\deg_a(u_k)-1$, whereas $\varphi(u_{k,\deg_a(u_k)})=u_{k,\deg_a(u_k)}$;
\item $\deg_a(u_{k,\ell})=\deg_a(u_k)-\ell$ and $\deg_b(u_{k,\ell})=0$ for $\ell=0,\ldots,\deg_a(u_k)$.
\end{itemize}
But then, $\varphi^{-1}$ is given by
$$
a_k\mapsto a_k+x_k^{-1}v_k, \ \ \ b_k\mapsto b_k+\sum_{\ell=0}^{\deg_a(u_k)}(-1)^{\ell}u_{k,\ell}, \ \ \ \text{for $k=1,\ldots,N$},
$$
and the identity on the rest of the arrows of $Q$. This shows that $\varphi^{-1}$ maps $R\langle Q\rangle$ into itself. Therefore, $\varphi$ maps $R\langle Q\rangle$ bijectively onto itself. Now, $\varphi(S)=
\sum_kx_ka_kb_k+x_ka_ku_{k,1}-v_ku_{k,1}-x_k^{-1}v_ku_k+S'$
is cyclically equivalent to a potential of the form $\sum_k\left(x_ka_kb_k+a_ku'_k\right)+S''$,
where $S''$ is a finite potential not involving any of the arrows $a_1,b_1,\ldots,a_N,b_N$, and $\max\{\deg_a(u_k')\suchthat k=1,\ldots,N\}<\max\{\deg_a(u_k)\suchthat k=1,\ldots,N\}$. Thus, the proposition follows by induction on $n=\max\{\deg_a(u_k)\suchthat k=1,\ldots,N\}$.
\end{proof}

\begin{ex} Consider Figure \ref{Fig:admissibility},
        \begin{figure}[!h]
                \caption{}\label{Fig:admissibility}
                \centering
                \includegraphics[scale=.5]{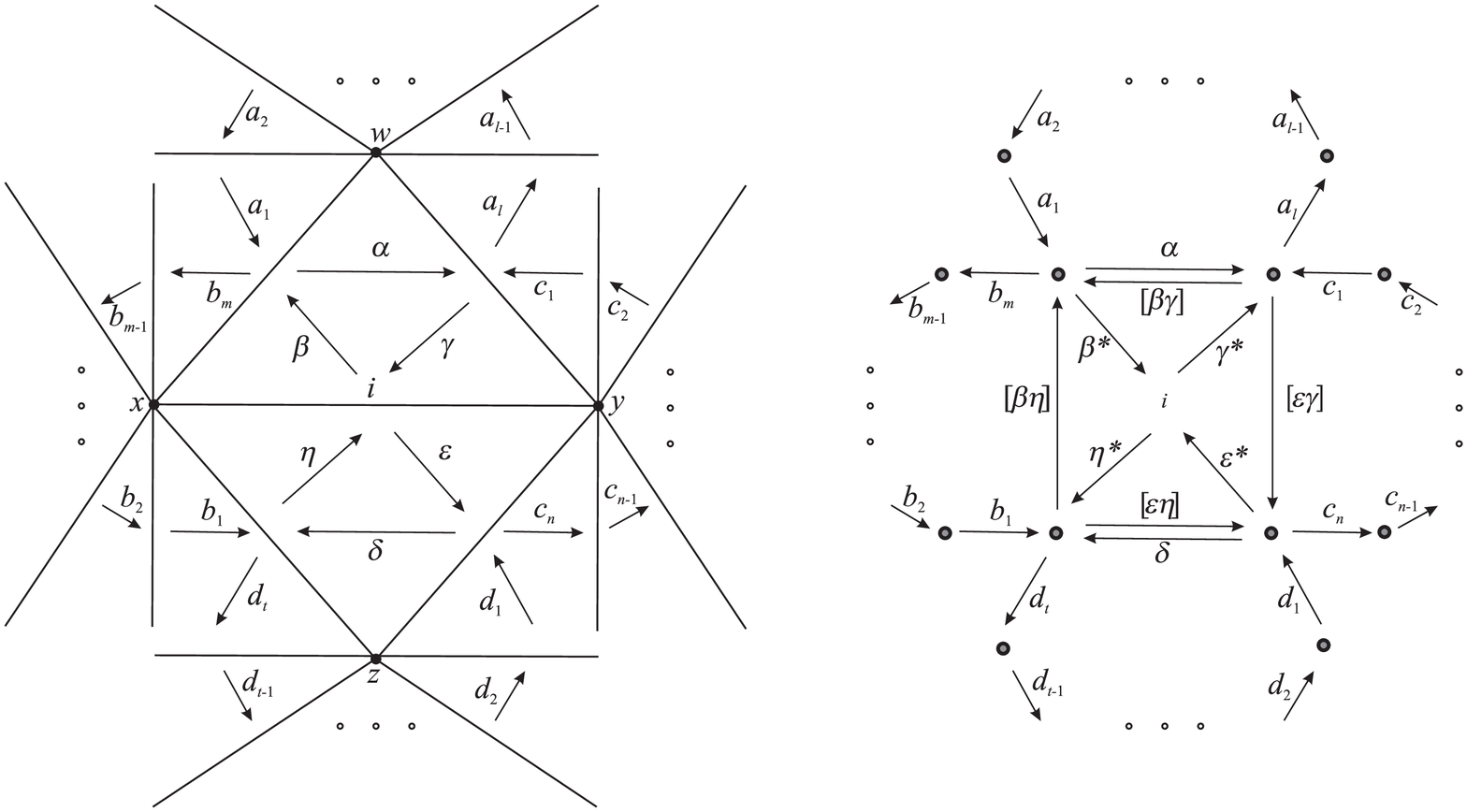}
        \end{figure}
where a small piece of an ideal triangulation $\tau$ is drawn on the left, and the quiver $\widetilde{\mu}_i(Q(\tau))$ is sketched on the right. The potential $\widetilde{\mu}_i(S(\tau))$, written in the form \eqref{eq:S-normal-form}, is given by
$$
\widetilde{\mu}_i(S(\tau))=\alpha[\beta\gamma]+\alpha (wa_1\ldots a_l)+\gamma^*\beta^*[\beta\gamma]
+\delta[\varepsilon\eta]+\delta (zd_1\ldots d_t)+\eta^*\varepsilon^*[\varepsilon\eta]
+S',
$$
where $S'=x[\beta\eta] b_1\ldots b_m+y[\varepsilon\gamma] c_1\ldots c_n
+\eta^*\beta^*[\beta\eta]+\gamma^*\varepsilon^*[\varepsilon\gamma]+S''(\tau)$, with $S''(\tau)$ a potential not involving any of the arrows $\alpha,\beta^*,\gamma^*,\delta,[\beta\gamma],[\varepsilon\eta],[\beta\eta],[\varepsilon\gamma]$. Setting $u_\alpha=wa_1\ldots a_l$, $u_\delta=zd_1\ldots d_t$, $v_{[\beta\gamma]}=\gamma^*\beta^*$, $v_{[\varepsilon\eta]}=\eta^*\varepsilon^*$, we see that, in the notation of Proposition \ref{prop:reduction-2steps}, we have
$\deg_a(v_{[\beta\gamma]})=\deg_a(v_{[\varepsilon\eta]})=
\deg_{b}(u_\alpha)=\deg_b(u_\delta)=\deg_{b}(v_{[\beta\gamma]})=\deg_b(v_{[\varepsilon\eta]})=0$.
Proposition \ref{prop:reduction-2steps} then guarantees the involvement of only finitely many cycles in $\mu_i(S(\tau))$, and the existence of a right-equivalence
$\widetilde{\mu}_i(Q(\tau),S(\tau))\to
\mu_i(Q(\tau),S(\tau))\oplus(\widetilde{\mu}_i(Q(\tau))_{\operatorname{triv}},\widetilde{\mu}_i(S(\tau))_{\operatorname{triv}})$ that maps the (incomplete) path algebra $R\langle\widetilde{\mu}_i(Q(\tau))\rangle$ bijectively onto the (incomplete) path algebra $R\langle\mu_i(Q(\tau))\oplus \widetilde{\mu}_i(Q)_{\operatorname{triv}}\rangle$. Applying Proposition \ref{prop:admissibility-of-mutation}, we see that if we manage to show that $(Q(\tau),S(\tau))$ satisfies the admissibility condition, then $\mu_i(Q(\tau),S(\tau))$ will automatically satisfy the admissibility condition as well.
\end{ex}

\begin{ex} Let us give an example involving a more complicated triangulation. Consider Figure \ref{Fig:admissibility_ugly},
        \begin{figure}[!h]
                \caption{}\label{Fig:admissibility_ugly}
                \centering
                \includegraphics[scale=.4]{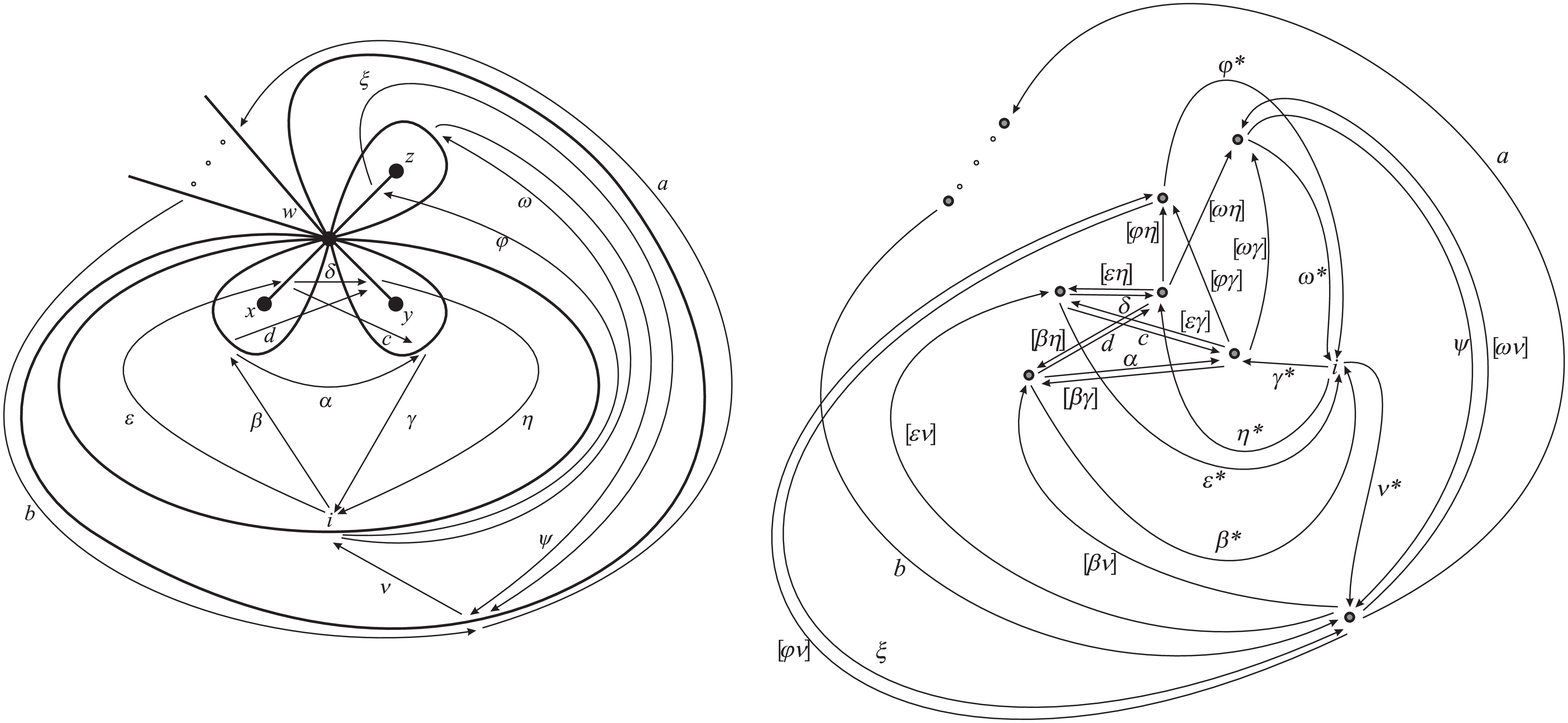}
        \end{figure}
where a small piece of an ideal triangulation $\tau$ is drawn on the left, and the quiver $\widetilde{\mu}_i(Q(\tau))$ is sketched on the right. The potential $\widetilde{\mu}_i(S(\tau))$, written in the form \eqref{eq:S-normal-form}, is given by
\begin{eqnarray}\nonumber
\widetilde{\mu}_i(S(\tau)) & = &
\alpha[\beta\gamma]+\gamma^*\beta^*[\beta\gamma]
+\psi[\omega\nu]+\nu^*\omega^*[\omega\nu]
+\frac{\delta[\varepsilon\eta]}{xy}+\delta(w[\varepsilon\nu] b\ldots a\xi[\varphi\eta])+\eta^*\varepsilon^*[\varepsilon\eta]\\
\nonumber
 & &
-\frac{c[\varepsilon\gamma]}{x}+\gamma^*\varepsilon^*[\varepsilon\gamma]
-\frac{d[\beta\eta]}{y}+\eta^*\beta^*[\beta\eta]
-\frac{\xi[\varphi\nu]}{z}+\nu^*\varphi^*[\varphi\nu]+S',
\end{eqnarray}
where $S'=\nu^*\beta^*[\beta\nu]
+\gamma^*\omega^*[\omega\gamma]
+\eta^*\omega^*[\omega\eta]
+\nu^*\varepsilon^*[\varepsilon\nu]
+\gamma^*\varphi^*[\varphi\gamma]
+\eta^*\varphi^*[\varphi\eta]
+S''(\tau)$,
with $S''(\tau)$ a potential not involving any of the arrows
$\alpha,\psi,\delta,\xi,c,d,
\gamma^*,\beta^*,\eta^*,\nu^*,\omega^*,\varepsilon^*,
[\beta\gamma],[\beta\eta],$ $[\beta\nu],[\omega\gamma],[\omega\eta],[\omega\nu],[\varepsilon\gamma],[\varepsilon\eta],[\varepsilon\nu],$
$\varphi^*,[\varphi\gamma],[\varphi\eta],[\varphi\nu]$. Just as in the previous example, we can easily check that Proposition \ref{prop:reduction-2steps} can be applied in combination with Proposition \ref{prop:admissibility-of-mutation} to deduce that if we manage to show that $(Q(\tau),S(\tau))$ satisfies the admissibility condition, then $\mu_i(Q(\tau),S(\tau))$ will automatically satisfy the admissibility condition as well.
\end{ex}

The following is the main result of this section.

\begin{thm}\label{thm:admissibility-holds-for-Qtau} The admissibility condition holds for every QP of the form $\qstau$ with $\tau$ a tagged triangulation of a surface with non-empty boundary.
\end{thm}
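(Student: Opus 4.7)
The first reduction is immediate: for any tagged triangulation $\tau$ with associated sign function $\epsilon\colon \punct \to \{-1, 1\}$, the vertex-relabeling $\mathfrak{t}_\epsilon$ carries $S(\tau^\circ)$ to $S(\tau)$ and restricts to an isomorphism $R\langle Q(\tau^\circ)\rangle \to R\langle Q(\tau)\rangle$ of non-completed path algebras identifying $J_0(S(\tau^\circ))$ with $J_0(S(\tau))$. Hence admissibility of the ideal-triangulation QP implies admissibility of the tagged one, and I may restrict attention to ideal triangulations.

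For ideal $\tau$, conditions (1)--(3) of admissibility are immediate: non-degeneracy and Jacobi-finiteness come from Theorem \ref{thm:flip<->mutation-ideal-triangs}, and the finiteness of $S(\tau)$ is built into its definition. So the entire task reduces to condition (4). I would prove this by induction along a flip-sequence, using the connectedness of the flip graph (Proposition \ref{prop:ideal-triangs-seqs-of-flips}). The inductive engine is Proposition \ref{prop:admissibility-of-mutation}: given admissibility for $(Q(\tau), S(\tau))$ and $\sigma = f_\arc(\tau)$, right-equivalence of $\mu_\arc(Q(\tau), S(\tau))$ with $(Q(\sigma), S(\sigma))$ (Theorem \ref{thm:flip<->mutation-ideal-triangs}) reduces the inductive step to producing a right-equivalence $\varphi\colon \widetilde{\mu}_\arc(Q(\tau), S(\tau)) \to (Q(\sigma), S(\sigma)) \oplus (C, T)$, with $(C,T)$ trivial, that preserves the non-completed path algebra. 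Proposition \ref{prop:reduction-2steps} reduces this in turn to checking that $\widetilde{\mu}_\arc(S(\tau))$ can be brought into the form (\ref{eq:S-normal-form}) with the stated degree conditions on the ancillary elements $u_k, v_k$.

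The base case must be treated separately: for each surface $\surf$ I would fix a specific ideal triangulation $\tau_0$, ideally one without self-folded triangles and with no puncture of valency two, so that no reduction is required and $\widehat{S}(\tau_0) = S(\tau_0)$ is the explicit finite potential consisting of the $3$-cycles $S^\triangle$ from interior triangles together with the puncture cycles $x_p a^p_1 \cdots a^p_{d_p}$. In those configurations where such a $\tau_0$ does not exist one allows a limited amount of reduction; in all cases the explicit form of $S(\tau_0)$ and the known finite-dimensionality of $\mathcal{P}(Q(\tau_0), S(\tau_0))$ let one verify condition (4) directly, by showing combinatorially that some power of the arrow ideal of $R\langle Q(\tau_0)\rangle$ lies inside $J_0(S(\tau_0))$ and then invoking the truncation argument used in the second half of the proof of Proposition \ref{prop:admissibility-of-mutation} to pull a finite linear combination of paths in $J(S(\tau_0))$ back to $J_0(S(\tau_0))$.

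The main obstacle is the case-by-case verification of the normal form (\ref{eq:S-normal-form}) at each flip. The two worked examples preceding the theorem illustrate the pattern, but a full proof requires enumerating the local configurations around the flipped arc $\arc$ --- whether $\arc$ bounds a self-folded triangle, which punctures are incident to $\arc$ or to the triangles containing $\arc$, and how those two triangles sit with respect to the boundary --- and in each configuration reading off the 2-cycle pairs $(a_k, b_k)$ in $\widetilde{\mu}_\arc(Q(\tau))$ together with the companion data $u_k, v_k$, then checking $\deg_a(v_k) = \deg_b(u_k) = \deg_b(v_k) = 0$. Remark \ref{rem:hooks-only-once}(2) is what makes this bookkeeping local and thus keeps the catalogue of cases finite; once the enumeration is completed, Propositions \ref{prop:admissibility-of-mutation} and \ref{prop:reduction-2steps} close the induction and the theorem follows.
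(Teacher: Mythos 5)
Your outline matches the paper's architecture closely: reduce tagged triangulations to ideal ones via $\tagfunction_\epsilon$, observe that conditions (1)--(3) follow from Theorems \ref{thm:flip<->mutation-tagged-triangs} and \ref{thm:flip<->mutation-ideal-triangs}, propagate condition (4) along flip-sequences via Propositions \ref{prop:admissibility-of-mutation} and \ref{prop:reduction-2steps}, and seed the induction at an ideal triangulation without self-folded triangles (the paper cites the proof of \cite[Theorem 36]{Lqps} for this base case). The paper packages the inductive step as Proposition \ref{prop:Qtauadmissible=>Qsigmaadmissible}.

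There is, however, a gap in your inductive step. You say that Proposition \ref{prop:reduction-2steps} reduces the problem to verifying the normal form \eqref{eq:S-normal-form} for $\widetilde{\mu}_\arc(S(\tau))$. But Proposition \ref{prop:reduction-2steps} only yields a path-algebra-preserving right-equivalence $\psi_1$ from $\widetilde{\mu}_\arc(Q(\tau),S(\tau))$ onto $(\widetilde{\mu}_\arc Q(\tau)_{\operatorname{red}},\widetilde{\mu}_\arc S(\tau)_{\operatorname{red}})\oplus(C,T)$; it does not land on $(Q(\sigma),S(\sigma))\oplus(C,T)$. The reduced potential produced by Proposition \ref{prop:reduction-2steps} is right-equivalent to $S(\sigma)$ by Theorem \ref{thm:flip<->mutation-ideal-triangs}, but admissibility (specifically condition (4)) is \emph{not} invariant under arbitrary right-equivalence, since a right-equivalence need not carry the non-completed path algebra onto itself. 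You therefore still need a path-algebra-preserving right-equivalence $(\widetilde{\mu}_\arc Q(\tau)_{\operatorname{red}},\widetilde{\mu}_\arc S(\tau)_{\operatorname{red}})\to(Q(\sigma),S(\sigma))$ to feed into Proposition \ref{prop:admissibility-of-mutation}. The paper supplies this missing ingredient by observing that the right-equivalence built in the proof of \cite[Theorem 30]{Lqps} is a \emph{change of arrows} (in the decomposition \eqref{eq:bimodule-homomorphisms} one has $\psi^{(2)}=0$), and changes of arrows automatically map path algebras onto path algebras; this observation is absent from your proposal.

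Separately, your expectation of a case-by-case catalogue of local configurations around $\arc$ does not reflect what the paper does. Remark \ref{rem:hooks-only-once}(2), together with the explicit form $\widetilde{\mu}_\arc(S(\tau))=[S(\tau)]+\Delta_\arc(Q(\tau))$, lets one read off the data $(a_k,b_k,u_k,v_k)$ uniformly --- $a_k$ are the arrows $\alpha$, $b_k=[\beta\gamma]$, and $v_k=\gamma^*\beta^*$ comes from $\Delta_\arc(Q(\tau))$ --- and verify the degree conditions of Proposition \ref{prop:reduction-2steps} for all flips at once. So your instinct to lean on Remark \ref{rem:hooks-only-once}(2) is correct, but the paper uses it to bypass the enumeration rather than merely to keep it finite.
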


\begin{remark} In the particular case when $\tau$ is an ideal triangulation of a surface with non-empty boundary, this had already been realized and stated by the second author in \cite{Lqps}. However, in \cite{Lqps} it is only shown that there exists an ideal triangulation whose QP satisfies the admissibility condition, and the proof that this condition holds for all QPs of ideal triangulations is omitted. Since admissibility turns out to be a delicate point in certain situations (e.g. in some approaches to the study of Donaldson-Thomas invariants, see \cite{Nagao} and the Comments at the end of its Introduction), in Proposition \ref{prop:Qtauadmissible=>Qsigmaadmissible} below we will present the proof that was omitted in \cite{Lqps}.
\end{remark}

Theorem \ref{thm:admissibility-holds-for-Qtau} will be a consequence of the following Proposition.

\begin{prop}\label{prop:Qtauadmissible=>Qsigmaadmissible} Suppose $\tau$ and $\sigma$ are ideal triangulations of $\surf$ related by a flip. If the admissibility condition holds for $\qstau$, then it holds for $\qssigma$ as well.
\end{prop}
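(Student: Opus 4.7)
The plan is to combine Propositions \ref{prop:admissibility-of-mutation} and \ref{prop:reduction-2steps} in the following way. By Theorem \ref{thm:flip<->mutation-ideal-triangs}, the reduced part of $\premuti\qstau$ is right-equivalent to $\qssigma$, and by hypothesis $\stau$ is a finite potential. Thus, writing $\arc$ for the flipped arc, it suffices, in view of Proposition \ref{prop:admissibility-of-mutation}, to produce a finite reduced potential $W$ on $\mu_\arc(Q(\tau))$, a trivial QP $(C,T)$, and a right-equivalence $\varphi\colon\premuti\qstau\to(\mu_\arc(Q(\tau)),W)\oplus(C,T)$ that restricts to a bijection between the non-completed path algebras $R\langle\premuti(Q(\tau))\rangle$ and $R\langle\mu_\arc(Q(\tau))\oplus C\rangle$; together with an argument transferring admissibility from $(\mu_\arc(Q(\tau)),W)$ to $\qssigma$, which is right-equivalent to it.

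The key step is to verify that $\premuti\stau$ can be brought into the normal form \eqref{eq:S-normal-form} required by Proposition \ref{prop:reduction-2steps}. The 2-cycles of $\premuti(Q(\tau))$ correspond exactly to those $\arc$-hooks $\beta\gamma$ of $Q(\tau)$ appearing in some oriented 3-cycle $\alpha\beta\gamma$ of $\stau$. Part (2) of Remark \ref{rem:hooks-only-once} guarantees that each such $\beta\gamma$ appears in precisely one such 3-cycle, so the new arrow $[\beta\gamma]$ occurs in $[\stau]$ with exactly one ``outer'' factor $\alpha$, while $\Delta_\arc(Q(\tau))$ contributes $\gamma^*\beta^*[\beta\gamma]$. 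After grouping these contributions, the remaining terms of $\premuti\stau$ either have the $[\beta\gamma]$ as a factor but with tails $u_{\beta\gamma},v_{\beta\gamma}$ that involve only $[\cdots]$-arrows (and not the starred arrows), or do not involve any of $\alpha,\beta^*,\gamma^*,[\beta\gamma]$ at all; this places $\premuti\stau$ in the form \eqref{eq:S-normal-form} with the degree conditions $\deg_a(v_k)=\deg_b(u_k)=\deg_b(v_k)=0$ satisfied. This verification reduces to a case analysis on the local configuration of $\arc$ in $\tau$ (depending on whether $\arc$ is the folded side of a self-folded triangle, lies next to one or two self-folded triangles, is incident to a puncture adjacent only to $\arc$, etc.), exactly as illustrated in the examples preceding the proposition; carrying out this case analysis uniformly is the main obstacle of the proof.

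Once the normal form is established, Proposition \ref{prop:reduction-2steps} yields a right-equivalence $\varphi\colon\premuti\qstau\to\qsreduced\oplus\qstrivial$ carrying $R\langle\premuti(Q(\tau))\rangle$ bijectively onto $R\langle Q_{\ored}\oplus Q_{\operatorname{triv}}\rangle$, with $S_{\ored}$ a finite potential on $Q_{\ored}=\mu_\arc(Q(\tau))$. Proposition \ref{prop:admissibility-of-mutation} then gives the admissibility condition for $(\mu_\arc(Q(\tau)),S_{\ored})$. To descend the conclusion to $\qssigma$, we observe that $(\mu_\arc(Q(\tau)),S_{\ored})$ and $\qssigma$ are both reduced QPs with finite potentials on the same quiver $Q(\sigma)$, and are right-equivalent by Theorem \ref{thm:flip<->mutation-ideal-triangs}; moreover, the explicit right-equivalences produced in the proof of that theorem (Theorem 30 of \cite{Lqps}, cf.\ Example \ref{ex:3-punctured-hexagon}) are given by finite substitutions, so they restrict to an $R$-algebra automorphism $\psi$ of $R\langle Q(\sigma)\rangle$ sending $S_{\ored}$ to a potential cyclically equivalent to $S(\sigma)$. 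Then $\psi(J_0(S_{\ored}))=J_0(S(\sigma))$, so $\psi$ induces an isomorphism $R\langle Q(\sigma)\rangle/J_0(S_{\ored})\to R\langle Q(\sigma)\rangle/J_0(S(\sigma))$ which, combined with the induced isomorphism $\mathcal{P}(\mu_\arc(Q(\tau)),S_{\ored})\to\jacobqssigma$, shows that $\phi_{\qssigma}$ is an isomorphism, establishing admissibility for $\qssigma$.
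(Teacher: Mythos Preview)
Your approach is essentially the same as the paper's: invoke Remark \ref{rem:hooks-only-once}(2) to put $\premuti(\stau)$ in the normal form \eqref{eq:S-normal-form}, apply Proposition \ref{prop:reduction-2steps} to get a path-algebra-preserving reduction, then use Proposition \ref{prop:admissibility-of-mutation} together with the explicit right-equivalence from the proof of \cite[Theorem~30]{Lqps}.

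Two minor points where the paper is tighter than your write-up. First, the paper does \emph{not} carry out a local case analysis: once you take the $a$-arrows to be the $\alpha$'s and the $b$-arrows to be the $[\beta\gamma]$'s coming from 3-cycles $\alpha\beta\gamma$ of $\stau$ through $\arc$, Remark \ref{rem:hooks-only-once}(2) alone guarantees that each $[\beta\gamma]$ occurs in $[\stau]$ only in the single term $x\alpha[\beta\gamma]$, so the only other occurrence of $[\beta\gamma]$ in $\premuti(\stau)$ is the term $\gamma^*\beta^*[\beta\gamma]$ from $\Delta_\arc$; this immediately gives $\deg_b(u_k)=\deg_b(v_k)=\deg_a(v_k)=0$ (note $v_k=\gamma^*\beta^*$ \emph{does} involve starred arrows, contrary to what you wrote, but it involves no $a$- or $b$-arrows, which is all that is required). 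Your ``main obstacle'' thus evaporates. Second, rather than first deducing admissibility for $(\mu_\arc(Q(\tau)),S_{\ored})$ and then transferring it to $\qssigma$ via a separate argument, the paper observes that the right-equivalence $\overline{\psi}_2$ from \cite[Theorem~30]{Lqps} is a \emph{change of arrows}, hence preserves non-completed path algebras; composing $\psi_1$ with $\overline{\psi}_2\oplus\mathrm{id}_C$ therefore already satisfies the hypotheses of Proposition \ref{prop:admissibility-of-mutation} with $W=S(\sigma)$, yielding admissibility for $\qssigma$ in a single stroke.
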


\begin{proof} Let $\arc\in\tau$ be such that $\sigma=f_{\arc}(\tau)$. By Definition \ref{def:QP-of-tagged-triangulation}, Theorem \ref{thm:flip<->mutation-ideal-triangs}, and Proposition \ref{prop:admissibility-of-mutation}, we only have to show the existence of a trivial QP $(C,T)$ and a right-equivalence $\varphi:\premuti\qstau\longrightarrow\qssigma\oplus(C,T)$ that restricts to an isomorphism between the path algebras $R\langle\premuti(Q(\tau))\rangle$ and $R\langle\qsigma\oplus C\rangle$.

Up to cyclical equivalence, we can suppose that the degree-2 component of $\premuti(\stau)$ is a sum of elements of the form $x\alpha[\beta\gamma]$, where $\alpha\beta\gamma$ is an oriented 3-cycle of $\qtau$ such that $\beta\gamma$ is an $\arc$-hook (in $\qtau$), and $x$ is a non-zero element of the ground field $\field$. Then, by the second item in Remark \ref{rem:hooks-only-once}, the premutation $(\premuti(\qtau),\premuti(\stau))$ satisfies the hypothesis of Proposition \ref{prop:reduction-2steps} if we set the $a$-arrows (resp. the $b$-arrows) to be the arrows $\alpha$ (resp. the arrows $[\beta\gamma]$) from the terms $x\alpha[\beta\gamma]$. Hence there exists a right-equivalence $\psi_1$ from $(\premuti(\qtau),\premuti(\stau))$ to the direct sum $(\premuti(\qtau)_{\operatorname{red}},\premuti(\stau)_{\operatorname{red}})
\oplus(\premuti(\qtau)_{\operatorname{triv}},\premuti(\stau)_{\operatorname{triv}})$ that maps $R\langle\premuti(\qtau)\rangle$ onto $R\langle\premuti(\qtau)_{\operatorname{red}}\oplus\premuti(\qtau)_{\operatorname{triv}}\rangle$, with $\premuti(\stau)_{\operatorname{red}}$ a finite potential.

Recall that a \emph{change of arrows} is an isomorphism $\psi$ of complete path algebras such that, when we write $\psi|_A=(\psi^{(1)},\psi^{(2)})$ as in \eqref{eq:bimodule-homomorphisms}, we have $\psi^{(2)}=0$. It is clear that changes of arrows map path algebras onto path algebras. As seen in the proof of \cite[Theorem 30]{Lqps}, there is a change of arrows $\overline{\psi}_2:R\langle\langle\premuti(\qtau)_{\operatorname{red}}\rangle\rangle\rightarrow R\langle\langle\qsigma\rangle\rangle$ that serves as a right-equivalence between $\mu_{\arc}\qstau=(\premuti(\qtau)_{\operatorname{red}},\premuti(\stau)_{\operatorname{red}})$ and $\qssigma$. The proposition follows by setting $(C,T)=(\premuti(\qtau)_{\operatorname{triv}},\premuti(\stau)_{\operatorname{triv}})$ and taking $\varphi:R\langle\langle\premuti(\qtau)\rangle\rangle\rightarrow R\langle\langle\qsigma\oplus C\rangle\rangle$ to be the composition
$$
R\langle\langle\premuti(\qtau)\rangle\rangle\overset{\psi_1}{\longrightarrow} R\langle\langle\premuti(\qtau)_{\operatorname{red}}\oplus C\rangle\rangle\overset{\psi_2}{\longrightarrow} R\langle\langle Q(\sigma)\oplus C\rangle\rangle,
$$
where $\psi_2$ acts as $\overline{\psi}_2$ on $\qsigma$ and as the identity on $C$.
\end{proof}

\begin{proof}[Proof of Theorem \ref{thm:admissibility-holds-for-Qtau}] By Theorems \ref{thm:flip<->mutation-tagged-triangs} and \ref{thm:flip<->mutation-ideal-triangs}, $\qstau$ is non-degenerate. The potential $\stau$ is obviously a finite linear combination of cyclic paths on $\qtau$. Since finite-dimensionality of Jacobian algebras is a QP-mutation invariant, the fact that $\jacobqstau$ is finite-dimensional follows from Theorems \ref{thm:flip<->mutation-tagged-triangs} and \ref{thm:flip<->mutation-ideal-triangs}.

In the proof of Theorem 36 of \cite{Lqps}, the existence is shown of an ideal triangulation $\tau'$ of $\surf$, without self-folded triangles, such that $\phi_{(Q(\tau'),S(\tau'))}$ is an isomorphism. This implies, in view of Definition \ref{def:QP-of-tagged-triangulation}, that for every $\epsilon:\punct\rightarrow\{-1,1\}$, the set $\bar{\Omega}'_\epsilon$ contains a tagged triangulation for whose QP the admissibility condition holds. From this fact, Definition \ref{def:QP-of-tagged-triangulation} and Proposition \ref{prop:Qtauadmissible=>Qsigmaadmissible}, we deduce that $\phi_{(Q(\tau),S(\tau))}$ is an isomorphism. Therefore, the admissibility condition holds for $\qstau$.
\end{proof}

\section{Cluster monomials, proper Laurent monomials and positivity}\label{sec:monomials}

\begin{defi} Let $B$ be a skew-symmetric matrix and $\widetilde{B}$ be an $(n+r)\times n$ integer matrix whose top $n\times n$ submatrix is $B$.
\begin{enumerate}\item Denote by $\myAA(\widetilde{B})$ the cluster algebra of the cluster pattern that has $\widetilde{B}$ at its initial seed.
\item Let $\x=(x_1,\ldots,x_n)$ be a cluster in $\myAA(\widetilde{B})$. A \emph{proper Laurent monomial in $\x$} is a product of the form $x_1^{c_1}\ldots x_n^{c_n}$ with at least one negative exponent $c_l$.
\item We say that $\myAA(\widetilde{B})$ \emph{has the proper Laurent monomial property} if for any two different clusters $\x$ and $\x'$ of $\myAA(\widetilde{B})$, every monomial in $\x'$ in which at least one element from $\x'\setminus\x$ appears with positive exponent is a $\Z\semifield$-linear combination of proper Laurent monomials in $\x$.
\end{enumerate}
\end{defi}

\begin{thm}\label{thm:admissibility=>proper-Laurent-monomials} Let $B$ and $\widetilde{B}$ be as above and $S$ be a non-degenerate potential on the quiver $Q=Q(B)$. Put $\widetilde{B}$ at the initial seed of a cluster pattern on the $n$-regular
tree $\TT$ with initial vertex $t_0$. Suppose that there exists a connected subgraph $G$ of $\TT$ that contains $t_0$ and has the following properties:
\begin{itemize}
\item Every (unordered) cluster of $\myAA(\widetilde{B})$ appears in at least one of the seeds attached to the vertices of $G$ by the cluster pattern under consideration;
\item for every path $t_0\frac{\phantom{---}}{i_1} t_1\frac{\phantom{---}}{i_2}\ldots\frac{\phantom{---}}{i_{m-1}}t_{m-1}\frac{\phantom{---}}{i_{m}}t$ contained in $G$, the QP obtained by applying to $(Q,S)$ the QP-mutation sequence $\mu_{i_1},\mu_{i_2},\ldots,\mu_{i_m}$, satisfies the admissibility condition.
\end{itemize}
Then $\myAA(\widetilde{B})$ has the proper Laurent monomial property.
\end{thm}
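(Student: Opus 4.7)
The plan is to translate the statement into the representation theory of $(Q,S)$ via Derksen-Weyman-Zelevinsky's correspondence (Theorem 2.15) and exploit the hypothesis that the admissibility condition propagates along paths in $G$. By Theorem 2.14, it suffices to prove the proper Laurent monomial property inside the principal-coefficient cluster algebra $\mathcal{A}_{\bullet}(B)$, since an arbitrary coefficient system is obtained by the substitution $y_j \mapsto \widehat{y}_j = y_j \prod_i x_i^{b_{ij}}$, which preserves the $\x$-exponent patterns of Laurent monomials. So fix a cluster $\x'$ attached to a vertex $t$ of $G$ via a path $t_0 \to t$ labeled $i_1,\dots,i_m$, and a cluster monomial $m = \prod_k (x'_k)^{d_k}$ in $\x'$. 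By Theorem 2.15, each $x'_k$ is the Caldero-Chapoton element of $\mathcal{M}_k = \mu_{i_1}\cdots\mu_{i_m}(\mathcal{S}^-_k(Q_t, S_t))$, and $m$ corresponds to $\mathcal{N} = \bigoplus_k \mathcal{M}_k^{d_k}$ via $\g_\mathcal{N} = \sum_k d_k \g_{\mathcal{M}_k}$ and $F_\mathcal{N} = \prod_k F_{\mathcal{M}_k}^{d_k}$. Writing $F_\mathcal{N}(\widehat{y}) = \sum_\e \chi(\Gr_\e(N)) y^\e \x^{B\e}$, every Laurent monomial in the expansion of $m$ in $\x$ has exponent vector of the form $\g_\mathcal{N} + B\e$ for some $\e$ with $\chi(\Gr_\e(N)) \neq 0$; the task is to show this vector has at least one strictly negative coordinate whenever some $d_k > 0$ satisfies $M_k \neq 0$ (equivalently, $x'_k \in \x' \setminus \x$).

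The admissibility hypothesis along the mutation sequence $\mu_{i_1},\dots,\mu_{i_m}$ is used to invoke Derksen-Weyman-Zelevinsky's homological interpretation of the $E$-invariant, notably the equality $E^{\mathrm{sym}}(\mathcal{L}, \mathcal{L}') = \dim \operatorname{Ext}^1_{\mathcal{P}(Q,S)}(L,L')$ for any pair of decorated representations. Mutation-invariance of $E$ and the trivial vanishing $E(\mathcal{S}^-_k) = 0$ together yield $E(\mathcal{N}) = 0$. Unpacking the defining identity \eqref{Eq:DefEInj}, this reads
\begin{equation*}
\mathbf{dim}(N) \cdot \g_\mathcal{N} = -\dim \Hom_{\mathcal{P}(Q,S)}(N,N),
\end{equation*}
which is strictly negative as soon as $N \neq 0$. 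Hence $\g_\mathcal{N}$ has a negative coordinate at some vertex in the support of $N$; and since $\mathbf{dim}(N) \cdot B\mathbf{dim}(N) = 0$ by skew-symmetry of $B$, the same inequality transports to the extremal value $\e = \mathbf{dim}(N)$. This handles the two endpoints of the range of $\e$.

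The hard part is extending this negativity to every intermediate $\e$ with $\chi(\Gr_\e(N)) \neq 0$. Any such $\e$ is the dimension vector of a subrepresentation $N' \subseteq N$. Pairing $\g_\mathcal{N} + B\e$ with $\mathbf{dim}(N') = \e$ and using skew-symmetry reduces the matter to showing
\begin{equation*}
\mathbf{dim}(N') \cdot \g_\mathcal{N} < 0 \quad \text{for every } 0 \neq N' \subseteq N,
\end{equation*}
which forces some coordinate of $\g_\mathcal{N} + B\e$ to be negative at a vertex in the support of $N'$. The plan is to extract this from the identity $E^{inj}(\mathcal{N}', \mathcal{N}) = \dim \Hom(N', N) + \mathbf{dim}(N') \cdot \g_\mathcal{N}$ applied to the decorated representation $\mathcal{N}' = (N', 0)$, combined with the non-vanishing $\dim \Hom(N', N) \geq 1$ coming from the inclusion $N' \hookrightarrow N$, together with a suitable control $E^{inj}(\mathcal{N}', \mathcal{N}) = 0$. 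This last vanishing is the truly subtle step, since $\mathcal{N}'$ need not itself arise from a cluster monomial: it must be extracted from the admissibility-enabled $\operatorname{Ext}^1$-description of $E^{\mathrm{sym}}$ combined with $E(\mathcal{N}) = 0$, following the techniques developed by the first author in \cite{GCI}. Once this final input is in place, the proper Laurent monomial property for cluster monomials in $\mathcal{A}_{\bullet}(B)$ is established, and the case of arbitrary coefficients follows by the opening reduction.
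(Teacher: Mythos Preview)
Your overall plan matches the paper's proof, but there are two issues.

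First, a setup gap: you only expand monomials from $\x'$ (at a vertex $t$) with respect to the \emph{initial} cluster $\x$ at $t_0$, whereas the proper Laurent monomial property concerns \emph{every} pair of clusters. The paper lets both vertices vary over $G$: given clusters $\x'$ at $t$ and $\x''$ at $s$, separation of additions is applied with $t$ as base, so the relevant QP-representations $\mathcal{M}_k$ live over $(Q',S')$, the QP at $t$, and the admissibility condition is invoked \emph{there} rather than at $t_0$. Your reduction to $\mathcal{A}_\bullet(B)$ is fine in spirit, but inside that algebra you still need the property for all pairs, and for the pair $(\x_t,\x_s)$ the governing representations are $(Q_t,S_t)$-modules. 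The repair is simply to run your argument with $t_0$ replaced by an arbitrary vertex of $G$.

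Second, and this is the substantive gap, you correctly flag the vanishing $E^{inj}(\mathcal{N}',\mathcal{N})=0$ as the crux but do not supply it. The paper's argument is short and uses the Auslander--Reiten translation: under the admissibility condition, \cite[Corollary~10.9]{DWZ2} gives
\[
E^{inj}(N',N)=\dim\Hom_{\mathcal{P}(Q',S')}(\tau^{-1}N,\,N'),\qquad E(\mathcal{N})=\dim\Hom(\tau^{-1}N,\,N)=0.
\]
For any subrepresentation $N'\subseteq N$ the inclusion induces an injection $\Hom(\tau^{-1}N,N')\hookrightarrow\Hom(\tau^{-1}N,N)=0$, so $E^{inj}(N',N)=0$ and hence $\e\cdot\g_{\mathcal{N}}=-\dim\Hom(N',N)<0$, exactly as you wanted. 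The mechanism is just left-exactness of $\Hom(\tau^{-1}N,-)$ applied to $N'\hookrightarrow N$; your gesture toward the $\operatorname{Ext}^1$-description of $E^{\mathrm{sym}}$ is AR-dual to this but does not by itself isolate the one-sided vanishing you need. (The paper also first reduces to the case where $\mathcal{N}$ is a \emph{positive} representation, by stripping off the negative simple summands corresponding to shared cluster variables and checking via denominator vectors that this does not affect properness; you pass over this.)
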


\begin{remark} \begin{itemize}\item QP-mutations are defined up to right-equivalence, so what the second property in the statement of Theorem \ref{thm:admissibility=>proper-Laurent-monomials} means is that there is at least one QP in the corresponding right-equivalence class for which the admissibility condition holds.
\item The graph $G$ in the statement of Theorem \ref{thm:admissibility=>proper-Laurent-monomials} may not contain all the vertices of $\TT$: in principle, it is possible that a given cluster appears in different vertices of $\TT$, but that in some of these vertices the accompanying QP (and any representative of its right-equivalence class) does not satisfy the admissibility condition. We only require that in at least one of these vertices the accompanying QP satisfies the admissibility condition, in such a way that the resulting collection of vertices of $\TT$ forms a connected subgraph. Thus, for example, in a cluster algebra of type $\mathbb{A}_2$, checking that the admissibility condition holds at (the QP attached to) every vertex of a 5-vertex connected subgraph of $\mathbb{T}_2$ would suffice in order to apply Theorem \ref{thm:admissibility=>proper-Laurent-monomials} and deduce the proper Laurent monomial property.
\end{itemize}
\end{remark}

\begin{proof}[Proof of Theorem \ref{thm:admissibility=>proper-Laurent-monomials}]
Let $\x'=(x'_1,\ldots,x'_n)$ and $\x''=(x''_1,\ldots,x''_n)$ be two different clusters of $\myAA(\widetilde{B})$. Then there exist vertices $t$ and $s$ of $G$,  paths $t_0\frac{\phantom{---}}{i_1} t_1\frac{\phantom{---}}{i_2}\ldots\frac{\phantom{---}}{i_{m-1}}t_{m-1}\frac{\phantom{---}}{i_{m}}t$ and $t_0\frac{\phantom{---}}{j_1} s_1\frac{\phantom{---}}{j_2}\ldots\frac{\phantom{---}}{i_{\ell-1}}j_{\ell-1}\frac{\phantom{---}}{j_{\ell}}s$ on $G$, and $(n+r)\times n$ integer matrices $\widetilde{B}'$ and $\widetilde{B}''$ such that $(\widetilde{B}',\x')$ (resp $(\widetilde{B}'',\x'')$) is the unique seed attached to the vertex $t$ (resp. $s$) by the cluster pattern under consideration. For $j=1,\ldots,n$, denote by $\mathbf{b}'_j$ the $j^{th}$ column of $\widetilde{B}'$. According to Theorem \ref{thm:FZ-separation-of-variables}, for $k=1,\ldots,n$ we have
\begin{equation}\label{eq:separation-formula}
x''_{k}=Y_kF_{k;s}^{B';t}|_{\myFF}(\x'^{\mathbf{b}'_1},\ldots,\x'^{\mathbf{b}'_n})x_1'^{g_{1,k}}\ldots x_n'^{g_{n,k}},
\end{equation}
where $Y_k$ is some element of $\semifield$ and we are using the shorthand
\begin{equation}
\x'^{\mathbf{b}'_l}=\prod_{k=1}^{n+r}x_k'^{b'_{kl}} \ \text{for each column} \ \mathbf{b}'_l=\left[\begin{array}{c}b'_{1l}\\ \vdots\\ b'_{(n+r)l}\end{array}\right] \ \text{of} \ \widetilde{B}'.
\end{equation}
Moreover, the $F$-polynomial $F_{k;s}^{B';t}$ and the $\mathbf{g}$-vector $\mathbf{g}_{k;s}^{B',t}=[g_{1,k},\ldots, g_{n,k}]^{\operatorname{t}}$ are defined in terms of the cluster pattern that uses $t$ as initial vertex, has principal coefficients at that vertex and has $B'$ as initial exchange matrix.

Now, let $Q'=Q(B')$ and $Q''=Q(B'')$ be the 2-acyclic quivers respectively associated to the skew-symmetric matrices $B'$ and $B''$. Furthermore, let $S'$ (resp. $S''$) be the potential on $Q'$ (resp. $Q''$) obtained by applying to $(Q,S)$ the QP-mutation sequence $\mu_{i_1},\ldots,\mu_{i_m}$ (resp. $\mu_{j_1},\ldots,\mu_{j_\ell}$). For each $k=1,\ldots,n$ define
\begin{equation}
\mathcal{M}_k=(M(k),V(k))=\mu_{i_m}\ldots\mu_{i_1}\mu_{j_1}\ldots\mu_{j_\ell}\left(\mathcal{S}^-_{k}(Q'',S'')\right),
\end{equation}
which is a decorated representation of $(Q',S')$.
By Theorem \ref{thm:DWZ-gvectors-Fpoynomials} and \eqref{eq:separation-formula} above,
\begin{equation}\label{eq:xk=Fpolynomial-times-gvector}
x''_{k}=Y_kF_{\mathcal{M}_k}(\x'^{\mathbf{b}'_1},\ldots,\x'^{\mathbf{b}'_n})x_1'^{g^{\mathcal{M}_k}_{1,k}}\ldots x_n'^{g^{\mathcal{M}_k}_{n,k}}
\end{equation}
(even if $j_{\ell}\neq k$), where $\mathbf{g}_{\mathcal{M}_k}=[g^{\mathcal{M}_k}_{1,k},\ldots,g^{\mathcal{M}_k}_{n,k}]^{\operatorname{t}}$ is the $\operatorname{g}$-vector of the decorated representation $\mathcal{M}_k=(M(k),V(k))$ and
\begin{equation}
F_{\mathcal{M}_k}=\sum_{\mathbf{e}\in\mathbb{N}^n}\chi(\Gr_{\mathbf{e}}(M(k)))\operatorname{X}^{\mathbf{e}}
\end{equation}
is its $F$-polynomial. (Here, $\operatorname{X}$ is a tuple of $n$  indeterminates).

Let $\x''^{\mathbf{a}}=x_1''^{a_1}\ldots x_n''^{a_n}$ be a monomial in $\x''$ in which at least one element from $\x''\setminus\x'$ appears with positive exponent, and define
\begin{equation}
\mathcal{M}=(M,V)=\mathcal{M}_1^{a_1}\oplus\ldots\oplus\mathcal{M}_n^{a_n}.
\end{equation}
Then, by \eqref{eq:xk=Fpolynomial-times-gvector} above, and Proposition 3.2 and Equation (5.1) of \cite{DWZ2},
\begin{equation}
\x''^{\mathbf{a}}=\left[\prod_{k=1}^nY_k^{a_k}\right]F_{\mathcal{M}}(\x'^{\mathbf{b}'_1},\ldots,\x'^{\mathbf{b}'_n})
x_1'^{g^{\mathcal{M}}_{1}}\ldots x_n'^{g^{\mathcal{M}}_{n}},
\end{equation}
where $\mathbf{g}_{\mathcal{M}}=[g^{\mathcal{M}}_1,\ldots,g^{\mathcal{M}}_n]^{\operatorname{t}}=
\sum_{k=1}^na_k\mathbf{g}_{\mathcal{M}_k}$ is the $\mathbf{g}$-vector of $\mathcal{M}$ and
\begin{equation}
F_{\mathcal{M}}=\sum_{\mathbf{e}\in\mathbb{N}^n}\chi(\Gr_{\mathbf{e}}(M))\operatorname{X}^{\mathbf{e}}
\end{equation}
is its $F$-polynomial. We therefore get
\begin{equation}\label{eq:cluster-monomial=Caldero-Chapoton}
\x''^{\mathbf{a}}=\left[\prod_{k=1}^nY_k^{a_k}\right]
\sum_{\mathbf{e}\in\mathbb{N}^n}\chi(\Gr_{\mathbf{e}}(M))\x'^{\widetilde{B}'\mathbf{e}}x_1'^{g^{\mathcal{M}}_{1}}\ldots x_n'^{g^{\mathcal{M}}_{n}}.
\end{equation}

Since QP-mutations preserve direct sums of QP-representations, we see that $\mathcal{M}$ is QP-mutation equivalent to a negative QP-representation. Since the $E$-invariant is a QP-mutation invariant and negative QP-representations have zero $E$-invariant, we deduce that $E(\mathcal{M})=0$. Suppose that $\mathcal{M}=(M,V)$ is not a positive QP-representation, and set $I=\{k\in [1,n]\suchthat \mathcal{M}_k$ is not positive$\}$. Then for each $k\in I$ there is an index $i_k\in[1,n]$ such that $\mathcal{M}_k=\mathcal{S}^-_{i_k}(Q',S')$ (this follows from the fact that $\mathcal{M}_k$ is an indecomposable QP-representation, and every indecomposable QP-representation is either positive or negative), and this implies $x''_k=x'_{i_k}$. Write $\z=\x''^{\mathbf{a}-\mathbf{c}}$, where
$$
c_k=\begin{cases}a_k & \text{if $k\in I$}\\
0 & \text{otherwise},
\end{cases}
$$
so that $\z$ is a cluster monomial in the cluster variables from $\x''$. Notice that $\mathbf{a}-\mathbf{c}\neq 0$, for at least one element of $\x''\setminus\x'$ appears in $\x''^{\mathbf{a}}$ with positive exponent. Therefore, the QP-representation
$$
\mathcal{M}_\z=\mathcal{M}_1^{a_1-c_1}\oplus\ldots\oplus\mathcal{M}_n^{a_n-c_n},
$$
is positive. Besides yielding an expression \eqref{eq:cluster-monomial=Caldero-Chapoton} for $\z$, $\mathcal{M}_\z$ is a direct summand of $\mathcal{M}$. Thus for all $k\in I$, the $i_k^{\operatorname{th}}$ vector space of the positive part of $\mathcal{M}_z$ is zero (indeed, since $M_{i_k}=0$, which follows from Corollary 8.3 of \cite{DWZ2}). This implies, by Corollary 5.5 of \cite{DWZ2}, that the $i_k^{\operatorname{th}}$ entry of the denominator vector of $\z$ with respect to the cluster $\x'$ is zero. This means that if we manage to prove that $\z$ is a $\Z\semifield$-linear combination of proper Laurent monomials in $\x'$, then we will have proved that $\x''^{\mathbf{a}}$ is also a $\Z\semifield$-linear combination of proper Laurent monomials in $\x'$. The advantage is that $\mathcal{M}_z$ is a positive QP-representation.

Justified by the previous paragraph we now assume, without loss of generality, that the QP-representation $\mathcal{M}$ associated to the cluster monomial $\x''^{\mathbf{a}}$ is positive. A quick look at \eqref{eq:cluster-monomial=Caldero-Chapoton} reveals us that in order to prove that $\x''^{\mathbf{a}}$ is a $\Z\semifield$-linear combination of proper Laurent monomials in $\x'$, it suffices to show that for every $\mathbf{e}\in\mathbb{N}^n$ for which $\chi(\Gr_{\mathbf{e}}(M))\neq 0$, the vector $B'\mathbf{e}+\mathbf{g}_\mathcal{M}$ has at least one negative entry.

We deal with non-zero $\mathbf{e}$ first. Suppose that there exists a non-zero subrepresentation $N$ of $M$ of dimension vector $\mathbf{e}$. Since $B'$ is skew-symmetric, the dot product $\mathbf{e}\cdot(B'\mathbf{e})$ is zero, and hence $\mathbf{e}\cdot(\mathbf{g}_{M}+B'\mathbf{e})=\mathbf{e}\cdot\mathbf{g}_M$. Thus, to prove that the vector $B'\mathbf{e}+\mathbf{g}_\mathcal{M}$ has at least one negative entry, it is enough to show that the number $\mathbf e\cdot\mathbf{g}_M$ is negative. Since $(Q',S')$ satisfies the admissibility condition, the $E$-invariant has a homological interpretation. More precisely, by \cite[Corollary~10.9]{DWZ2} we have
$$
E(M)=\mathbf{dim}(M)\cdot\mathbf g_M+\textrm{dim}\textrm{Hom}_{\mathcal{P}(Q',S')}(M,M)=0=\textrm{dim}\textrm{Hom}_{\mathcal{P}(Q',S')}(\tau^{-1}M,M)
$$
$$
\text{and} \ \ E^{inj}(N,M)=\mathbf e\cdot\mathbf g_M+\textrm{dim}\textrm{Hom}_{\mathcal{P}(Q',S')}(N,M)=\textrm{dim}\textrm{Hom}_{\mathcal{P}(Q',S')}(\tau^{-1}M,N),
$$
where $\tau$ is the Auslander-Reiten translation.

Since $N$ is a subrepresentation of $M$, there is an injection
$$
\textrm{Hom}_{\mathcal{P}(Q',S')}(\tau^{-1}M,N)\rightarrow\textrm{Hom}_{\mathcal{P}(Q',S')}(\tau^{-1}M,M)
$$
and so $E^{inj}(N,M)\leq E(M)=0$. It follows that $E^{inj}(N,M)=0$ and hence $\mathbf e\cdot\mathbf g_M=-\textrm{dim}\textrm{Hom}(N,M)<0$ as desired.

It remains to treat the case $\mathbf{e}=0$. Since $\mathbf{dim}(M)\cdot \mathbf{g}_{\mathcal{M}}=-\textrm{dim}\textrm{Hom}_{\mathcal{P}(Q',S')}(M,M)<0$, the vector $\mathbf{g}_{\mathcal{M}}$ has a negative entry as desired. Theorem \ref{thm:admissibility=>proper-Laurent-monomials} is proved.
\end{proof}


\begin{thm}\label{thm:positive-have-positive-coefficients} Let $B$ and $\widetilde{B}$ be as above. If $\myAA(\widetilde{B})$ has the proper Laurent monomial property, then any positive element of $\myAA(\widetilde{B})$ that belongs to the $\Z\semifield$-submodule of $\myAA(\widetilde{B})$ spanned by all cluster monomials is a $\Z_{\geq 0}\semifield$-linear combination of cluster monomials. Furthermore, cluster monomials are linearly independent over $\Z\semifield$.
\end{thm}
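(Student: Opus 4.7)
The plan is to extract both assertions from a single coefficient-matching argument: given any $\Z\semifield$-linear combination of cluster monomials, I will expand it over a fixed cluster and then use the proper Laurent monomial property to decompose it into a pure-monomial part and a proper-Laurent-monomial part, which have disjoint supports in the ambient Laurent polynomial ring over $\Z\semifield$.

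For the linear independence assertion, suppose $\sum_i c_i m_i = 0$ in $\myAA(\widetilde{B})$ with $c_i\in\Z\semifield$ and the $m_i$ distinct cluster monomials. Fix an arbitrary cluster $\x$ and partition the indices according to whether $m_i$ is a pure monomial in $\x$ (meaning all of its cluster variable factors lie in $\x$) or not. For each $m_i$ in the second group, any cluster $\x_i$ containing $m_i$ must differ from $\x$ and must contain some factor of $m_i$ appearing with positive exponent that is not in $\x$; the proper Laurent monomial property then rewrites such $m_i$ as a $\Z\semifield$-linear combination of proper Laurent monomials in $\x$. The distinct pure monomials arising from the first group and the proper Laurent monomials arising from the second group are all distinct Laurent monomials in $\x$, hence $\Z\semifield$-linearly independent, and so every $c_i$ with $m_i$ a pure monomial in $\x$ must vanish. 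Letting $\x$ range over all clusters of $\myAA(\widetilde{B})$, and recalling that each cluster monomial lies in at least one cluster, we conclude that every $c_i$ vanishes.

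For the positivity assertion, let $f$ be a positive element in the $\Z\semifield$-span of cluster monomials, and write $f=\sum_i c_i m_i$ uniquely, using the linear independence just established. To show a given $c_{i_0}$ lies in $\Z_{\geq 0}\semifield$, I would pick a cluster $\x$ containing $m_{i_0}$ and expand $f$ over $\x$ via the same partition. The cluster monomials lying in $\x$ contribute pure Laurent monomials whose coefficients are the corresponding $c_i$, while those not lying in $\x$ contribute only to proper Laurent monomials in $\x$. Matching the coefficient of the specific Laurent monomial $m_{i_0}$ on both sides recovers $c_{i_0}$ as an honest Laurent coefficient of $f$ with respect to $\x$, and the positivity of $f$ forces this coefficient to lie in $\Z_{\geq 0}\semifield$.

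The main obstacle is verifying that the partition above behaves as claimed: one must check that distinct cluster monomials supported on $\x$ give rise to distinct Laurent monomials in $\x$ (immediate from the algebraic independence of the cluster $\x$ over $\Q\semifield$), and that the proper Laurent monomial property genuinely applies to every $m_i$ not supported on $\x$. The latter requires, for each such $m_i$, producing a cluster $\x_i$ with $\x_i\neq \x$ containing $m_i$ together with a factor of $m_i$ in $\x_i\setminus\x$ appearing with positive exponent; this follows from the fact that any cluster containing $m_i$ must contain every variable in the support of $m_i$, combined with the hypothesis that this support is not contained in $\x$. Once these points are cleanly established, the remainder of the argument reduces to routine coefficient-matching in the Laurent polynomial ring $\Z\semifield[x_1^{\pm 1},\dots,x_n^{\pm 1}]$.
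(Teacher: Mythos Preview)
Your proposal is correct and follows essentially the same approach as the paper: both arguments fix a cluster $\x$, observe that cluster monomials supported on $\x$ contribute only pure monomials to the Laurent expansion in $\x$ while the remaining ones contribute only proper Laurent monomials (by hypothesis), and then read off each coefficient $c_i$ as an honest Laurent coefficient. The only cosmetic difference is that you prove linear independence first and invoke it for the positivity statement, whereas the paper extracts the coefficient directly in both cases without appealing to uniqueness beforehand.
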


\begin{proof} Suppose $X$ is a positive element of $\myAA(\widetilde{B})$ that belongs to the $\Z\semifield$-submodule of $\myAA(\widetilde{B})$ spanned by all cluster monomials, so that we can write
\begin{equation}
X=\sum_{\x,\mathbf{a}}y_{\x,\mathbf{a}}\x^{\mathbf{a}}
\end{equation}
for some elements $y_{\x,\mathbf{a}}\in\Z\semifield$, all but a finite number of which are zero, where the sum runs over all cluster monomials of $\myAA(\widetilde{B})$ and all vectors $\mathbf{a}\in\mathbb{N}^n$. (As before, we have used the shorthand $\x^{\mathbf{a}}=x_1^{a_1}\ldots x_{n}^{a_n}$ for a cluster $\x=(x_1,\ldots,x_n)$ and a vector $\mathbf{a}=(a_1,\ldots,a_n)\in\mathbb{N}^n$).

Fix a cluster $\x$ and a vector $\mathbf{a}\in\mathbb{N}^n$. By Theorem \ref{thm:admissibility=>proper-Laurent-monomials}, each cluster monomial $\x'^{\mathbf{a}'}$ where at least one element of $\x'$ appears with positive exponent is a $\Z\semifield$-linear combination of proper Laurent monomials in $\x$. This means that in the expansion of $X$ as a Laurent polynomial in $\x$ with coefficients in $\Z\semifield$, the coefficient of the monomial $\x^{\mathbf{a}}$ is precisely $y_{\x,\mathbf{a}}$ (for the monomials in $\x$ are certainly linearly independent over $\Z\semifield$). Since $X$ is positive, this means that all integers appearing in $y_{\x,\mathbf{a}}$ are non-negative.

The proof that cluster monomials are linearly independent follows from an argument similar to the one above. Indeed, suppose
\begin{equation}\label{eq:cluster-mons-are-lin-ind}
\sum_{\x,\mathbf{a}}y_{\x,\mathbf{a}}\x^{\mathbf{a}}=0
\end{equation}
is an expression of $0$ as a $\Z\semifield$-linear combination of cluster monomials. Fix a cluster $\x$ and a vector $\mathbf{a}\in\mathbb{N}$. Every cluster monomial $\x'^{\mathbf{a}'}$ where at least one element of $\x'$ appears with positive exponent is a $\Z\semifield$-linear combination of proper cluster monomials in $\x$. Thus, when we express the left hand side of \eqref{eq:cluster-mons-are-lin-ind} as a Laurent polynomial in $\x$ with coefficients in $\Z\semifield$, the coefficient of $\x^{\mathbf{a}}$ will be precisely $y_{\x,\mathbf{a}}$. After clearing out denominators, the equality \eqref{eq:cluster-mons-are-lin-ind} yields $y_{\x,\mathbf{a}}=0$.

Theorem \ref{thm:positive-have-positive-coefficients} is proved.
\end{proof}

%
%

\begin{coro}\label{coro:Laurent-for-surfaces} Let $\surf$ be a surface with non-empty boundary and $\myAA\surf$ be a cluster algebra of geometric type associated to $\surf$. Then $\myAA\surf$ has the proper Laurent monomial property. In particular, cluster monomials of $\myAA\surf$ are linearly independent over the group ring of the ground semifield.
\end{coro}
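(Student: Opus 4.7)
The plan is to apply Theorem \ref{thm:admissibility=>proper-Laurent-monomials} directly, using the non-degenerate potentials of Section \ref{sec:QPs-of-tagged-triangulations} and constructing the required subgraph $G$ out of the flip graph of tagged triangulations. Fix an arbitrary tagged triangulation $\tau_0$ of $\surf$, and place the initial seed $(\widetilde{B}(\tau_0),\x_{\tau_0})$ at the root $t_0$ of $\TT$. The QP $(Q(\tau_0),S(\tau_0))$ of Definition \ref{def:QP-of-tagged-triangulation} is non-degenerate by Theorems \ref{thm:flip<->mutation-tagged-triangs} and \ref{thm:flip<->mutation-ideal-triangs}, so it provides the non-degenerate potential demanded by Theorem \ref{thm:admissibility=>proper-Laurent-monomials}.

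Next, I would construct $G$ by lifting the flip graph $\mathbf{E}^{\bowtie}\surf$ into $\TT$. By the Fomin--Shapiro--Thurston theorem recalled in the remark after Theorem \ref{thm:cluster-category-for-surface}, the exchange graph of $\myAA\surf$ coincides with $\mathbf{E}^{\bowtie}\surf$, and by Proposition \ref{prop:ideal-triangs-seqs-of-flips} this graph is connected. Inductively, starting from $t_0$ (assigned to $\tau_0$), whenever a vertex $t\in G$ has been labeled with a tagged triangulation $\sigma$, and some flip $\sigma'=f_{\arc}(\sigma)$ has not yet been realized at a previously added vertex, adjoin to $G$ the edge of $\TT$ emanating from $t$ with the label corresponding to $\arc$, and attach the new endpoint to $\sigma'$. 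A breadth-first execution of this procedure produces a connected subtree $G\subseteq\TT$ whose vertices are in bijection with the tagged triangulations of $\surf$. Since every cluster of $\myAA\surf$ is the cluster of some tagged triangulation, the first bullet in Theorem \ref{thm:admissibility=>proper-Laurent-monomials} is satisfied.

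For the second bullet, any path $t_0\frac{\phantom{xxx}}{i_1}t_1\ldots\frac{\phantom{xxx}}{i_m}t$ inside $G$ corresponds by construction to a flip sequence $\tau_0,\tau_1,\ldots,\tau_m$ of tagged triangulations with $\tau_k=f_{\arc_{i_k}}(\tau_{k-1})$. By iterated application of Theorem \ref{thm:flip<->mutation-tagged-triangs}, the QP obtained from $(Q(\tau_0),S(\tau_0))$ by the mutation sequence $\mu_{i_1},\ldots,\mu_{i_m}$ is right-equivalent to $(Q(\tau_m),S(\tau_m))$, which satisfies the admissibility condition by Theorem \ref{thm:admissibility-holds-for-Qtau}. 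Hence both hypotheses of Theorem \ref{thm:admissibility=>proper-Laurent-monomials} are fulfilled, and $\myAA\surf$ has the proper Laurent monomial property. Linear independence of cluster monomials over $\Z\semifield$ is then immediate from Theorem \ref{thm:positive-have-positive-coefficients}.

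The only delicate point is making sure that the lift $G$ is connected, visits every unordered cluster, and that along every path starting at $t_0$ the cluster at each intermediate vertex is in fact the cluster of the prescribed tagged triangulation. This is a matter of separating the combinatorial identification of clusters with tagged triangulations (given to us by Fomin--Shapiro--Thurston) from the possible redundancy of $\TT$; the breadth-first procedure above resolves this by attaching at most one vertex of $\TT$ per tagged triangulation, which guarantees that the labels on $G$ are coherent with seed mutation along every path from $t_0$.
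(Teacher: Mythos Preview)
There is a genuine gap in the way you invoke Theorem \ref{thm:flip<->mutation-tagged-triangs}. That theorem does \emph{not} assert that every single flip of tagged triangulations is compatible with QP-mutation up to right-equivalence; it only guarantees that, given any two tagged triangulations $\tau$ and $\sigma$, there \emph{exists} a particular flip sequence from $\tau$ to $\sigma$ along which this compatibility holds. Consequently, your sentence ``By iterated application of Theorem \ref{thm:flip<->mutation-tagged-triangs}, the QP obtained from $(Q(\tau_0),S(\tau_0))$ by the mutation sequence $\mu_{i_1},\ldots,\mu_{i_m}$ is right-equivalent to $(Q(\tau_m),S(\tau_m))$'' is not justified: the path in your breadth-first tree $G$ is an arbitrary flip sequence, and for an arbitrary flip $\sigma'=f_\arc(\sigma)$ the paper only establishes an isomorphism of Jacobian algebras (Corollary \ref{coro:flip<->mutation-jacobian}, proved via cluster categories), not a right-equivalence of QPs. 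Without right-equivalence you cannot transfer the admissibility condition from $(Q(\tau_m),S(\tau_m))$ to the mutated QP, and the second bullet of Theorem \ref{thm:admissibility=>proper-Laurent-monomials} is not verified.

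The paper's proof avoids exactly this issue by building $G$ differently: it fixes an ideal triangulation at $t_0$, takes for each tagged triangulation $\tau$ the \emph{specific} flip sequence $p_\tau$ supplied by Theorem \ref{thm:flip<->mutation-tagged-triangs}, and lets $G$ be the union of the lifts of these $p_\tau$ to $\TT$. Along every such lifted path the QP-mutations are, by construction, right-equivalent to the QPs $(Q(\tau_k),S(\tau_k))$, so Theorem \ref{thm:admissibility-holds-for-Qtau} applies directly. Your breadth-first lift of the whole flip graph loses this control; replacing it with the union of the distinguished paths $p_\tau$ fixes the argument.
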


\begin{proof} Call \emph{labeled triangulation} a tagged triangulation $\tau$ whose elements have been labeled with the numbers $1,\ldots,n$, (where $n$ is the number of elements of $\tau$), in such a way that different arcs receive different labels. Fix an ideal triangulation $\sigma$ of $\surf$ and attach it to an initial vertex $t_0$ of the $n$-regular tree $\TT$. Label the arcs in $\sigma$ so that $\sigma$ becomes a labeled triangulation. Then there is a unique way of assigning a labeled triangulation to each vertex of $\TT$ in such a way that for every edge $t\frac{\phantom{xxx}}{k}t'$, the labeled triangulations assigned to $t$ and $t'$ are related by the flip of the arc labeled $k$.

For every (unlabeled) tagged triangulation $\tau$ of $\surf$ there is a path $p_\tau$ in $\mathbf{E}^{\bowtie}\surf$ from $\sigma$ to $\tau$ along whose edges flips are compatible with QP-mutations (this is Theorem \ref{thm:flip<->mutation-tagged-triangs}). This path has a unique lift to a path on $\TT$ starting at $t_0$. The labeled triangulation attached to the terminal vertex of this lift is precisely $\tau$ with some ordering of its elements. By Theorem \ref{thm:admissibility-holds-for-Qtau}, the graph $G$ obtained by lifting all paths $p_\tau$, for $\tau$ unlabeled tagged triangulation, satisfies the hypotheses of Theorem \ref{thm:admissibility=>proper-Laurent-monomials}. The corollary follows.
\end{proof}

\begin{remark} It was shown in \cite{DWZ2} that cluster monomials are linearly independent provided any matrix mutation-equivalent to the initial exchange matrix has full rank. This was shown as well in \cite[Theorem 5.5]{FK}, also under the referred full-rank assumption, for cluster algebras that admit a certain categorification. Corollary \ref{coro:Laurent-for-surfaces} above is stronger, because it removes any rank assumption and because there are quite a number of surfaces whose triangulations do not have full-rank signed-adjacency matrices. Indeed, by \cite[Theorem 14.3]{FST}, the matrix $B(\tau)$ of a triangulation has full rank if and only if the underlying surface $\surf$ is unpunctured and has an odd number of marked points at each boundary component; thus, given a surface with punctures or with at least one boundary component containing an even number of marked points, and given any triangulation $\tau$ of such a surface, one can obtain infinitely many non-full rank extended exchange matrices having $B(\tau)$ as their top square submatrix.
\end{remark}

\begin{remark} An \emph{atomic basis} of a cluster algebra $\mathcal{A}$ is a
$\Z\semifield$-linear basis $\mathcal{B}$ of $\mathcal{A}$ such that the positive
elements are precisely the $\Z_{\geq 0}\semifield$--linear combinations of
elements of $\mathcal{B}$. The existence of atomic basis has been proved only
for a few types of cluster algebras (\cite{DTh},\cite{GCIthesis}, \cite{GCIpaper},\cite{ShZ}). In the case when $\mathcal{A}$
comes from a surface with boundary, Theorem \ref{thm:positive-have-positive-coefficients} and Corollary \ref{coro:Laurent-for-surfaces}
indicate that, if an atomic basis exists, it should contain the cluster monomials.
\end{remark}

\begin{remark} After this paper was submitted have the preprints \cite{CKLP} and \cite{MSWbases} appeared.
\begin{itemize}
\item In \cite{CKLP} it is shown that cluster monomials are always linearly independent in skew-symmetric cluster algebras with arbitrary coefficients, regardless of the rank of the extended exchange matrices. The proof uses an appropriate homological interpretation of the $E$-invariant in generalized cluster categories (not necessarily $\Hom$-finite) that allows to drop the assumption of the admissibility condition and still get the proper Laurent monomial property in skew-symmetric cluster algebras. This appropriate interpretation is based in Plamondon's study \cite{Plamondon-char} \cite{Plamondon-inf} \cite{Plamondon-thesis} of generalized cluster categories whose $\Hom$-spaces are not necessarily finite-dimensional.
\item In \cite{MSWbases}, Musiker-Schiffler-Williams construct two bases for cluster algebras coming from unpunctured surfaces and having full-rank extended exchange matrices. They conjecture that one of these two bases is the atomic basis. Furthermore, in the punctured case they construct two subsets of the corresponding cluster algebra that are good candidates to be bases and contain all cluster monomials.
\end{itemize}
\end{remark}

\section{Atomic bases for types $\mathbb{A}$, $\mathbb{D}$ and $\mathbb{E}$}\label{sec:atomic-bases}

In this section we give an application of Theorem \ref{thm:positive-have-positive-coefficients} to show that cluster monomials form atomic bases in skew-symmetric cluster algebras of finite type.

\begin{thm}\label{thm:atomic-basis-for-finite-type} Let $\myAA$ be a coefficient-free cluster algebra of finite type. Then the set of cluster monomials is an atomic basis of $\myAA$.
\end{thm}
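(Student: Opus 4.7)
The plan is to deduce Theorem \ref{thm:atomic-basis-for-finite-type} from Theorem \ref{thm:positive-have-positive-coefficients} by establishing the proper Laurent monomial property for every coefficient-free cluster algebra $\myAA$ of simply-laced finite type. Granting this, Theorem \ref{thm:positive-have-positive-coefficients} simultaneously yields linear independence of cluster monomials over $\Z$ and the fact that any positive element in their $\Z$-span is a non-negative $\Z$-linear combination of them; combined with the classical theorem of Caldero--Keller that cluster monomials span $\myAA$ as a $\Z$-module whenever $\myAA$ is of finite type, this will give exactly the atomic basis property.

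The verification of the proper Laurent monomial property splits along the ADE trichotomy. For type $\mathbb{A}_n$, the coefficient-free cluster algebra coincides with the one associated to an unpunctured $(n+3)$-gon, and for type $\mathbb{D}_n$ with the one associated to a once-punctured $n$-gon; both are surfaces with non-empty boundary, so Corollary \ref{coro:Laurent-for-surfaces} applies directly and finishes those two cases.

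For type $\mathbb{E}$ (i.e.\ $\mathbb{E}_6$, $\mathbb{E}_7$ or $\mathbb{E}_8$) there is no underlying surface, so Corollary \ref{coro:Laurent-for-surfaces} is unavailable and I would invoke Theorem \ref{thm:admissibility=>proper-Laurent-monomials} directly. The idea is to pick an acyclic orientation $Q$ of the relevant Dynkin diagram and take the initial QP to be $(Q,0)$. Since $Q$ is acyclic and Dynkin, $R\langle Q\rangle=\completeRQ$ is finite-dimensional and $J_0(0)=J(0)=0$, so the admissibility condition holds trivially at the initial seed. Because type $\mathbb{E}$ has a finite exchange graph, propagating admissibility to every vertex amounts to checking the hypothesis of Proposition \ref{prop:admissibility-of-mutation} at each mutation step; Proposition \ref{prop:reduction-2steps} is tailor-made for this, and what makes it applicable is that $(Q,0)$ is rigid as an acyclic QP, with rigidity preserved under mutation by Theorem \ref{thm:propertiesofQP-mutations}(4), which controls the shape of the degree-$2$ part of each premutated potential.

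The \emph{main obstacle} is precisely this last step in type $\mathbb{E}$: one must show at every stage that the degree-$2$ part of the premutated potential can be absorbed into a trivial QP by a right-equivalence that restricts to an isomorphism of non-completed path algebras, and iterate this throughout the (finite but intricate) $\mathbb{E}$-type mutation class. Rigidity and Jacobi-finiteness of all QPs in the class, both inherited from $(Q,0)$ via Theorem \ref{thm:propertiesofQP-mutations}, are the crucial structural inputs that keep the required combinatorial bookkeeping tractable.
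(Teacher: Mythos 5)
Two genuine gaps separate your proposal from a complete proof, and the second reveals that you are missing the paper's key simplifying observation.

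First, you never establish that cluster monomials are themselves \emph{positive} elements of $\myAA$. An atomic basis $\mathcal{B}$ is a $\Z$-basis for which the set of positive elements \emph{equals} the set of non-negative $\Z$-combinations of $\mathcal{B}$. Your plan (Caldero--Keller spanning, plus linear independence and the positive-implies-non-negative-coefficients direction both from Theorem \ref{thm:positive-have-positive-coefficients}) gives exactly one inclusion. The other inclusion --- that every non-negative combination of cluster monomials is positive --- requires knowing that each cluster monomial has non-negative Laurent expansion with respect to every cluster. This is a non-trivial input; the paper cites Hernandez--Leclerc and Nakajima (and Musiker--Schiffler--Williams for types $\mathbb{A}$, $\mathbb{D}$) for it. Without this step the theorem does not follow from what you have.

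Second, your type $\mathbb{E}$ argument is not actually carried out, and the route you sketch is the hard way. Starting from $(Q,0)$ and propagating admissibility step by step via Propositions \ref{prop:admissibility-of-mutation} and \ref{prop:reduction-2steps} does require, at every flip, the specific normal form \eqref{eq:S-normal-form} with the degree conditions $\deg_a(v_k)=\deg_b(u_k)=\deg_b(v_k)=0$; you identify this as the ``main obstacle'' but leave it unresolved. Moreover, the appeal to rigidity here does no work: rigidity of $(Q,0)$ and its mutations (Theorem \ref{thm:propertiesofQP-mutations}(4)) controls the Jacobian ideal, not the combinatorial shape of the degree-$2$ part of the premutated potential that Proposition \ref{prop:reduction-2steps} needs. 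The paper sidesteps all of this with a much cleaner observation, valid uniformly in types $\mathbb{A}$, $\mathbb{D}$, $\mathbb{E}$: on any quiver $Q$ mutation-equivalent to a Dynkin orientation, the non-degenerate potential is unique up to right-equivalence, so one may take $S$ to be the sum of all chordless cycles on $Q$ (in the sense of \cite[Section 9]{DWZ1}), and this particular $(Q,S)$ is known to satisfy the admissibility condition. Applying this at every vertex of the (finite) exchange graph immediately supplies the subgraph $G$ needed in Theorem \ref{thm:admissibility=>proper-Laurent-monomials}, with no reduction bookkeeping at all. Your surface-based treatment of types $\mathbb{A}$ and $\mathbb{D}$ via Corollary \ref{coro:Laurent-for-surfaces} is correct, but the chordless-cycle argument handles those cases too and makes the $\mathbb{A}/\mathbb{D}$ vs.\ $\mathbb{E}$ dichotomy unnecessary.
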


\begin{proof} By \cite[corollary~3]{CK1}, the set of cluster monomials forms a $\Z$-basis of $\myAA$. On the other hand, the cluster monomials of $\myAA$ are positive by results of \cite[Sections 10 and 11]{HL} and \cite[Theorem A.1]{Nakajima} (in types $\mathbb{A}$ and $\mathbb{D}$ this also follows from \cite{MSW}). It remains to show that every positive element of $\myAA$ can be written as a non-negative linear combination of cluster monomials.

Let $Q$ be a quiver mutation-equivalent to an orientation of one of the Dynkin diagrams $\mathbb{A}_n$, $\mathbb{D}_n$, $\mathbb{E}_6$, $\mathbb{E}_7$ and $\mathbb{E}_8$. Then for any two non-degenerate potentials $S$ and $S'$ on $Q$ there is a right-equivalence between $(Q,S)$ and $(Q,S')$. This fact gives us some freedom to choose a suitable potential on $Q$. Let $S$ be the sum of all chordless cycles on $Q$ (cf. \cite[Section 9]{DWZ1}). Then $(Q,S)$ satisfies the admissibility condition.
Therefore, $\myAA$ satisfies the proper Laurent monomial property. Since cluster monomials form a $\Z$-basis of $\myAA$, any positive element is a $\Z$-linear combination of cluster monomials. But then, by Theorem \ref{thm:positive-have-positive-coefficients}, the coefficients in this combination are non-negative.
\end{proof}

We close the paper with a quite intriguing question.

\begin{question} Is the admissibility condition a QP-mutation invariant?
\end{question}

As we saw in Theorems \ref{thm:admissibility=>proper-Laurent-monomials} and \ref{thm:atomic-basis-for-finite-type}, a positive answer to this question would yield desirable properties for the corresponding cluster algebras.

\section*{Acknowledgements}
The collaboration that led to the present paper began during the trimester program \emph{On the Interaction of Representation Theory with Geometry and Combinatorics} held at the Hausdorff Research Institute for Mathematics (HIM, Bonn, Germany) from January to April 2011. Both authors are grateful to the organizers of this program for creating a great long-lasting working atmosphere.

We thank Bernhard Keller for kindly answering the second author's questions on exchange graphs of quivers with potentials and cluster-tilting objects.

Finally, though not less importantly, we express our gratitude to Andrei Zelevinsky, who was Ph.D. thesis advisor of each one of us, for introducing us to cluster algebra theory and its connections with various areas of mathematics. We also thank him for useful comments on the first version of this paper.

\end{document}